\documentclass[11pt,a4paper]{amsart}
\usepackage{amssymb}
\usepackage{amsthm}
\usepackage{amsmath}
\usepackage{amsxtra}
\usepackage{latexsym}
\usepackage{mathrsfs}
\usepackage[all,cmtip]{xy}
\usepackage[all]{xy}
\usepackage{enumitem}
\usepackage{xcolor}
\usepackage{graphicx}
\usepackage{comment}
\usepackage{mathabx,epsfig}
\usepackage{stmaryrd}
\usepackage{comment}
\usepackage{pst-node}

\usepackage{tikz-cd} 

\usepackage{hyperref}

\newcommand{\RNum}[1]{\uppercase\expandafter{\romannumeral #1\relax}}

\newtheorem{thm}{Theorem}[section]
\newtheorem{lem}[thm]{Lemma}

\newtheorem{quest}[thm]{Question}

\newtheorem{prop}[thm]{Proposition}
\newtheorem{cor}[thm]{Corollary}

\theoremstyle{definition}
\newtheorem{defn}[thm]{Definition}
\newtheorem{rmk}[thm]{Remark}

\numberwithin{equation}{section}

\newcommand\be{\begin{equation}}
	\newcommand\ba{\begin{eqnarray}}
		\newcommand\ee{\end{equation}}
	\newcommand\ea{\end{eqnarray}}

\def\C{{\mathbb C}}
\def\Q{{\mathbb Q}}
\def\R{{\mathbb R}}
\def\Z{{\mathbb Z}}
\def\P{{\mathbb P}}
\def\A{{\mathbb A}}
\def\N{{\mathbb N}}

\DeclareMathOperator{\Prep}{Prep}

\DeclareMathOperator{\Per}{Per}

\DeclareMathOperator{\Gal}{Gal}
\DeclareMathOperator{\Orb}{Orb}


%
	{\end{list}}

\title[Common zeros of iterated morphisms]
{Towards common zeros of iterated morphisms}
\thanks{The second author was supported in part by NSERC grant RGPIN-2022-02951.}

\author[Chatchai Noytaptim]{Chatchai Noytaptim}
\address{University of Waterloo \\
	Department of Pure Mathematics \\
	Waterloo, Ontario \\
	Canada  N2L 3G1}
\email{cnoytaptim@uwaterloo.ca, chatchai.noytaptim@gmail.com}
\author{Xiao Zhong}
\address{University of Waterloo \\
	Department of Pure Mathematics \\
	Waterloo, Ontario \\
	Canada  N2L 3G1}
\email{x48zhong@uwaterloo.ca}
\date{\today}
\subjclass[2020]{37P05, 37P30, 37P50}
\keywords{arithmetic dynamics,  arithmetic equidistribution, compositional independence, free semigroup, polynomial decomposition}
\begin{document}
	\maketitle
	\begin{abstract} 
Recently, the authors have proved the finiteness of common zeros of two iterated rational maps under some compositional independence assumptions. In this article, we 
advance towards a question of Hsia and Tucker \cite[Question 19]{HT17} on a Zariski non-density of common zeros of iterated morphisms on a variety. More precisely, we provide an affirmative answer in the case of {H}\'{e}non type maps on $\mathbb{A}^2$, endomorphisms on $(\mathbb{P}^1)^n$, and polynomial skew products  on $\mathbb{A}^2$ defined over $\overline{\mathbb{Q}}$. As a by-product, we prove a Tits' alternative analogy for semigroups generated by two regular polynomial skew products. 
	\end{abstract}
\section{Introduction}
Given a variety $V$ defined over $\C$. Two dominant morphisms $f:V\rightarrow V$ and $g:V\rightarrow V$ are said to be compositionally independent if there are positive integers $k$ and $l$ such that
$$\phi_1\circ \phi_2\circ...\circ\phi_k=\phi_1\circ\phi_2\circ...\circ \phi_l$$ where $\phi_i\in \{f,g\}$, then $k=l$. In other words, the semigroup generated by $f$ and $g$ under the composition is isomorphic to the free semigroup of two generators. If $f$ and $g$ are not compositionally independent, we call them compositionally dependent.\\
\indent Here and what follows, we denote by $f^n:=f\circ f\circ...\circ f$ the $n$-fold composition of  $f$ with itself. In \cite{HT17}, Hsia and Tucker posed the following question:
\begin{quest} \label{quest:HTmainquestion} \cite[Question 19]{HT17} Let $V$ be a variety defined over $\C$ and let $f,g : V\rightarrow V$ be two dominant compositionally independent morphisms. Let $c : V\rightarrow V$ be any morphism. Is it true that the set of $\lambda\in V(\C)$ such that 
$$f^n(\lambda)=g^n(\lambda)=c(\lambda)$$ must be contained in a proper Zariski closed subset of $V$?
\end{quest}
To avoid an ambiguity, let us denote $f^{\langle n \rangle}$ the  $n$th power of polynomials $f$ in this paragraph. Question \ref{quest:HTmainquestion} has a long history dating back to the work of Bugeaud-Corvaja-Zannier \cite{BCZ03} providing an upper bound for the greatest common divisor of $a^{\langle n \rangle}-1$ and $b^{\langle n \rangle}-1$ where $a$ and $b$ are multiplicatively independent integers $\geq2$. Their work serves as an application of the Schmidt subspace theorem, a beautiful result in Diophantine approximation. This framework was later extended by Ailon-Rudnick \cite{AR04} and Ostafe \cite{Os16} to the function field setting with integers replaced by polynomials. In fact, they proved that if $f,g\in \C[x]$ are two multiplicative independent polynomials, then there exists $h\in \C[x]$ such that $\mathrm{gcd}(f^{\langle m \rangle}-1,g^{\langle n \rangle}-1)|h$  for all $m,n\geq1$. At the heart of the proof, they employ a well-known conjecture of Lang (which proved by Ihara-Serre-Tate \cite{La65, La83}) on the finiteness of the intersection of an irreducible non-special curve\footnote{Here, a special curve means it is a translation by a torsion point of an algebraic subgroup of $\C^*\times \C^*$. That is, it has the form $x^my^n-\zeta=0$ or $x^m-\zeta y^n=0$ where $\zeta\in\mu_{\infty}$.} in $\C^*\times\C^*$ with the roots of unity $\mu_{\infty}\times\mu_{\infty}$ applying to the rational curve $\{(f(x),g(x)) :x\in\C\}$. The boundedness of $\deg\mathrm{gcd}(f^{\langle m \rangle}-1,g^{\langle n \rangle}-1)$ is due to the work of Beukers and Smyth \cite{BS02}.  Motivated by the aforementioned works, Hsia and Tucker \cite{HT17} were the first to use an entirely different approach and tools to study the dynamical analogue of the greatest common divisors of iterated polynomial maps. Roughly speaking, their method hinges on an arithmetic equidistribution of small points, Northcott's theorem, and deep results from Diophantine geometry. \\
\indent Hsia-Tucker \cite{HT17} originally provided the  answer to Question \ref{quest:HTmainquestion} in affirmative  when $V=\C$ and $f,g,c$ are polynomials in $\C[x]$.  Recently, Noytaptim-Zhong \cite{NZ24} have verified the same question when $V=\P^1_{\C}$ and $f,g,c$ are rational functions in $\C(x)$. Thus, Question \ref{quest:HTmainquestion} has been answered in the affirmative in the following cases:
\begin{table}[!ht]  
    \centering
    \begin{tabular}{|l|l|l|l|}
\cline{2-4}
    \multicolumn{1}{c|}{} &$\deg f>1$&$\deg f>1$& $\deg f=1$  \\
        \multicolumn{1}{c|}{}  &$\deg g>1$&$\deg g=1$& $\deg g=1$\\
         \hline
         $V=\C$&\cite{HT17}&\cite{HT17}&\cite{HT17}\\ $f,g,c\in \C[x]$&Proposition 8&Proposition 9&Theorem 2\\ \text{polynomials}& & &\\
         \hline
         $V=\P^1_{\C}$&\cite{NZ24}&\cite{NZ24}&\cite{NZ24}\\ $f,g,c\in \C(x)$&Proposition 4.3&Proposition 4.4&Theorem 1.3\\ \text{rational functions}& & &\\
         \hline
    \end{tabular}
\end{table}

\indent As one might expect, Question \ref{quest:HTmainquestion} remains far from being fully resolved at this time due to the absence of a reasonable criterion for a semigroup of dominant morphisms to be free. In this article, we make a partial progress towards the Question \ref{quest:HTmainquestion} in certain cases, namely polynomial automorphims on $\A^2_{\overline{\Q}}$,  endomorphisms on $(\P^1_{\overline{\Q}})^n$, and polynomial skew products on $\A^2_{\overline{\Q}}$.\\
\indent The dynamics of polynomial automorphisms of $\A^2_{\C}$ has been explored extensively in the past decades, e.g., \cite{BS91}, \cite{FS99}, \cite{Hu86}, and references therein.  Notably, Friedland and Milnor \cite{FM89} have classified three different types of polynomial automorphisms on $\A^2_{\C}$. More precisely, any polynomial automorphism is conjugate to either one of the following forms : \begin{enumerate}
\item[(a)] an affine map of the form $a(x,y)=(a_1x+b_1y+c_1, a_2x+b_2y+c_2)$ for some constants $a_i,b_i,c_i$ such that $a_1b_2-a_2b_1\neq0$;
\item[(b)] an elementary automorphism of the form $e(x,y)=(ax+P(y),by+c)$ for some constants $a,b,$ and $c$ with $ab\neq0$ and for some polynomial function $P(y)$;
\item[(c)] a finite composition of regular polynomial automorphism of degree $\geq 2$ of the form 
$$h_j(x,y)=(y,P_j(y)-\delta_j x)$$ where $P(y)$ is a polynomial of degree $\geq 2$.
\end{enumerate}
Here the degree of  polynomial automorphisms denotes the maximum of the degree of its two components.
Any polynomial automorphism on $\A^2_{\C}$  of type (c) above  will be called {H}\'{e}non type and its degree is defined to be $(\deg P_1)(\deg P_2)\cdots (\deg P_m)$ for some $m\geq1$. Thus an  {H}\'{e}non type map is of the form $$h(x,y)=(y,P(y)-\delta x)$$ where $\deg P=d\geq 2$. Note that the rational map $\tilde{h} : \P^2 \dashrightarrow \P^2$ induced by  $h$ is $$\tilde{h}[x:y:z]=[yz^{d-1}:z^dP\left(\frac{y}{z}\right)-\delta xz^{d-1}:z^d]$$ which has exactly one point of indeterminacy $p_+=[1:0:0]$. Furthermore, the hyperplane $\{z=0\}$ at infinity is mapped to the point $p_{-}=[0:1:0]$ which is not in the indeterminacy locus of $\tilde{h}$. Similarly, the rational map $\tilde{h}^{-1} : \P^2\dashrightarrow\P^2$ induced by the inverse $h^{-1}(x,y)=(-\delta^{-1}y+\delta^{-1}P(x),x)$ has one point of indeterminacy $p_{-}$ and it maps the hyperplane $\{z=0\}$ at infinity to the  point $p_{+}$. The loci of indeterminacy of $\tilde{h}
$ and $\tilde{h}^{-1}$ are disjoint and {H}\'{e}non type $h$ is also called regular, see \cite{Sib99}. \\
\indent Our first main result is an evidence supporting Questions \ref{quest:HTmainquestion} in the case of {H}\'{e}non type map on $\A^2_{\overline{\Q}}$.
\begin{thm}\label{thm:Henontypemain}  Let $f$ and $g$ be two polynomial automorphisms of {H}\'{e}non type  of the affine plane $\A^2$ defined over a number field $K$. Let $c : \A^2 \rightarrow \A^2$ be a morphism defined over $K$ which is not a compositional power of $f$ and $g$. Suppose that $f$ and $g$ are compositionally independent. Then the set of $p\in \A^2_{\overline{K}}$ satisfying 
$$f^m(p)=g^n(p)=c(p)$$ for some positive integers $m$ and $n$ is not a Zariski dense subset of $\A^2$.
\end{thm}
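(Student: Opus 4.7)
The plan is to argue by contradiction along the lines of \cite{HT17} and \cite{NZ24}. Assume the set
\[
Z := \bigcup_{m, n \ge 1} Z_{m,n}, \qquad Z_{m,n} := \{p \in \A^2_{\overline K} : f^m(p) = g^n(p) = c(p)\},
\]
is Zariski dense in $\A^2$, and aim to extract an identity in $\langle f, g\rangle$ that violates compositional independence. A preliminary dichotomy concerns the dimension of the components of $Z_{m,n}$: if some $Z_{m,n}$ contains an irreducible curve $C$, then $f^m|_C = g^n|_C$, so the polynomial automorphism $\phi := f^m \circ g^{-n}$ pointwise fixes the curve $g^n(C)$. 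Since H\'enon-type maps have only isolated fixed points, the Friedland--Milnor classification forces $\phi$ to be of affine or elementary type. Structural results for subgroups of $\Aut(\A^2)$ generated by H\'enon-type maps (such as work of Lamy and Furter--Lamy), combined with the hypothesis that $c$ is not a compositional power of $f$ or $g$, should then yield a nontrivial word identity in $\langle f, g \rangle$, contradicting compositional independence.

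Ruling out the positive-dimensional case, we may assume each $Z_{m,n}$ is zero-dimensional, and the Zariski density of the countable union then forces a sequence of distinct points $p_i \in Z$ whose exponents satisfy $m_i, n_i \to \infty$, since a bounded range of $(m,n)$ would confine $Z$ to finitely many proper closed subvarieties. The next step is to show $\hat h_f(p_i) \to 0$ and $\hat h_g(p_i) \to 0$, where $\hat h_f = \hat h_f^+ + \hat h_f^-$ is Kawaguchi's canonical-height decomposition for the regular automorphism $f$. Applying the transformation rules $\hat h_f^+(f^{m}(p)) = d_f^{m}\hat h_f^+(p)$ and $\hat h_f^-(f^{-m}(p)) = d_f^{m}\hat h_f^-(p)$ to both $f^{m_i}(p_i) = c(p_i)$ and $p_i = f^{-m_i}(c(p_i))$, combined with the Northcott-type comparison between $\hat h_f$ and the standard height $h$ on $\A^2 \subset \P^2$ and the estimate $h(c(p)) \le (\deg c)\, h(p) + O(1)$, one obtains (after combining with the analogous identities for $g$) the simultaneous smallness of $\hat h_f(p_i)$ and $\hat h_g(p_i)$.

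Finally, the small-height sequence $(p_i)$ with $[K(p_i):K] \to \infty$ allows us to invoke arithmetic equidistribution for regular polynomial automorphisms of $\A^2$ over $\overline K$, as developed by Kawaguchi in the framework of Yuan--Zhang and Chambert-Loir--Thuillier. The Galois orbits $\{\sigma(p_i) : \sigma \in \Gal(\overline K/K)\}$ equidistribute at every place $v$ of $\overline K$ simultaneously to the equilibrium measures $\mu_{f,v}$ and $\mu_{g,v}$, forcing $\mu_{f,v} = \mu_{g,v}$ for all $v$. The main obstacle is the concluding rigidity step: translating this measure coincidence into a compositional relation between $f$ and $g$. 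I would aim to leverage current-theoretic rigidity for polynomial automorphisms of $\A^2$ (Bedford--Smillie, Dinh--Sibony, Cantat), which should yield that two H\'enon-type maps sharing their equilibrium measure must admit a common iterate, generating a non-free semigroup and thus contradicting compositional independence.
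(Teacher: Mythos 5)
Your outline reproduces the paper's steps (I)--(II) — extract a small generic sequence, apply Yuan-type equidistribution at every place, conclude $\mu_{f,v}=\mu_{g,v}$ — but the concluding step is where your proposal has a genuine gap. You appeal to an unspecified ``current-theoretic rigidity (Bedford--Smillie, Dinh--Sibony, Cantat)'' that is supposed to say: two H\'enon-type maps with the same equilibrium measure admit a common iterate. No such off-the-shelf theorem is available in those references, and you yourself flag this as the main obstacle without supplying an argument. The paper closes exactly this hole, and it does so arithmetically rather than complex-analytically: from $\mu_{f,v}=\mu_{g,v}$ \emph{at all places} it deduces $K_{f,v}=K_{g,v}$ via Dujardin--Favre's polynomial-hull lemma (Lemma \ref{lem:convexhullofmeasure}), then uses an arithmetic local-to-global property of the canonical height ($p$ is $f$-periodic iff all its Galois conjugates lie in $K_{f,v}$ for every $v$, because $\tilde h_f(p)=0\iff p\in\Per(f)$ and the local Green functions are nonnegative) to conclude $\Per(f)=\Per(g)$; only then does it invoke Dujardin--Favre's Step 3, which rests on Lamy's Tits alternative for $\Aut[\C^2]$, to produce a common iterate and contradict compositional independence. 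Note that this chain genuinely uses the measure equality at the non-archimedean places too; a purely archimedean implication ``same $\mu$ $\Rightarrow$ common iterate'' is precisely what you would have to prove, and your proposal does not.

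Two smaller points. First, your preliminary dichotomy on positive-dimensional components of $Z_{m,n}$ is both unnecessary and not actually carried out: since $c$ is not an iterate of $f$ or $g$, each $\{f^m=c\}$ and $\{g^n=c\}$ is a proper closed subset, so along any generic sequence of solutions one automatically has $m_i,n_i\to\infty$ regardless of whether these loci contain curves; by contrast, your proposed way of ruling out the curve case (that $f^m\circ g^{-n}$ being affine or elementary ``should'' force a word identity in the semigroup $\langle f,g\rangle$) is not an argument — a relation $f^m=\ell\circ g^n$ with $\ell$ affine does not by itself contradict freeness of the semigroup. Relatedly, equidistribution on $\P^2$ requires a \emph{generic} sequence (no infinite subsequence inside a proper closed subvariety), not merely distinct points with $[K(p_i):K]\to\infty$; you should extract such a sequence from the assumed Zariski density, as the paper implicitly does. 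Second, in the smallness step be careful with the minus part of Kawaguchi's height: since $\hat h^-_f(f^m(p))=d^{-m}\hat h^-_f(p)$, the identity you get from $p_i=f^{-m_i}(c(p_i))$ reads $\hat h^-_f(p_i)=d^{m_i}\hat h^-_f(c(p_i))$ and does not by itself bound $\hat h^-_f(p_i)$; the bookkeeping needed to make ``simultaneous smallness'' precise (this is the paper's Proposition \ref{prop:smallpoints}) deserves an explicit treatment rather than the one-line appeal you give.
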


 Let $f_1,f_2,...,f_n$ be rational functions of degree $d\geq 2$. Then we consider  $F=(f_1,f_2,...,f_n)$  an  endomorphism on $(\P^1)^n$ which given by their coordinatewise action $$F(x_1,x_2,...,x_n)=(f_1(x_1),f_2(x_2),...,f_n(x_n)).$$ As endomorphisms on $(\P^1_{\overline{\mathbb{Q}}})^n$ are split-type, it often confines the complexity of dynamics to dimension one which many tools are available. The dynamics of endomorphisms on $(\P^1)^n$ has been extensively studied by many authors, e.g., \cite{GNY18}, \cite{GNY19}, \cite{Mi13}, \cite{MSW23}, \cite{Zh23}. 
Then we have the following result which claims the non-Zariski density of common zeros of two iterated endomorphisms on $(\P^1_{\overline{\mathbb{Q}}})^n$. For $n=1$, it was previously known in \cite[Theorem 1.4]{NZ24}.

\begin{thm}\label{thm:commonzerosofendomorphism} Given  endomorphisms $F=(f_1,...,f_n)$ and $ G=(g_1,...,g_n),$  on $(\P^1)^n$ defined over a number field $K$ of degree $d\geq 2$. Suppose that the semigroup generated by $F$ and $G$ is free. Let $R=(r_1,r_2,...,r_n)$ be any endomorphism on $(\P^1)^n$ defined over $K$ such that $R$ is not compositionally power of $F$ or $G$. Let  $X:=(x_1,x_2,...,x_n)\in (\P^1_{\overline{K}})^n.$ Then there exist positive integers $k$ and $l$ with the property that the solutions to \begin{equation}F^k(X)=G^l(X)=R(X)\notag\label{eq:mainthmeq}\end{equation} form a Zariski-non-dense set in $(\P^1_{\overline{K}})^n$.
\end{thm}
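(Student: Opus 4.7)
The plan is to argue by contradiction, using arithmetic equidistribution on $(\mathbb{P}^1)^n$ combined with the split (product) structure of $F$ and $G$ to reduce the problem to a coordinate-wise rigidity statement on Julia measures in $\mathbb{P}^1$. Assuming that $\Sigma := \{X \in (\mathbb{P}^1_{\overline{K}})^n : F^k(X) = G^l(X) = R(X) \text{ for some } k,l \geq 1\}$ is Zariski dense, a first step is to extract a sequence $\{X_m\} \subset \Sigma$ which is Zariski-generic (no proper closed subvariety contains infinitely many $X_m$) and whose parameters satisfy $k_m, l_m \to \infty$. This extraction is possible because $R$ is not a compositional power of $F$ or $G$, so each $\{F^k = R\}$ and $\{G^l = R\}$ is a proper closed subvariety, and consequently the finite union $\bigcup_{k \leq N}\{F^k = R\} \cup \bigcup_{l \leq N}\{G^l = R\}$ cannot absorb $\Sigma$ for any $N$.

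Next I would polarize $(\mathbb{P}^1)^n$ by the ample line bundle $L := \bigotimes_{i=1}^n \pi_i^*\mathcal{O}(1)$, for which $F^*L \cong L^{\otimes d} \cong G^*L$; the associated canonical heights split as $\hat{h}_F = \sum_i \hat{h}_{f_i} \circ \pi_i$ (and similarly for $\hat{h}_G$). The equation $F^{k_m}(X_m) = R(X_m)$ together with the standard comparison $\hat{h}_F \circ R \leq (\deg R)\, \hat{h}_F + O(1)$ gives $d^{k_m}\hat{h}_F(X_m) \leq (\deg R)\,\hat{h}_F(X_m) + O(1)$, hence $\hat{h}_F(X_m) \to 0$; symmetrically $\hat{h}_G(X_m) \to 0$. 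Yuan's arithmetic equidistribution theorem, applied to both polarized dynamical systems, then forces $\mu_F = \mu_G$ on $(\mathbb{P}^1_{\mathbb{C}})^n$.

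Because $F$ and $G$ are split, $\mu_F = \bigotimes_{i=1}^n \mu_{f_i}$ and $\mu_G = \bigotimes_{i=1}^n \mu_{g_i}$; comparing one-dimensional marginals yields $\mu_{f_i} = \mu_{g_i}$ for every $i$. By the one-dimensional rigidity underlying Theorem 1.4 of \cite{NZ24} (Levin--Przytycki, with the refinements of Ye and Pakovich), the equality $\mu_{f_i} = \mu_{g_i}$ together with $\deg f_i = \deg g_i = d \geq 2$ produces a common-iterate relation $f_i^{a_i} = g_i^{a_i}$ for some $a_i \geq 1$ (the matching of degrees $d^{a_i} = d^{b_i}$ forces the two exponents to coincide). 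Setting $N := \operatorname{lcm}(a_1, \ldots, a_n)$, one concludes $F^N = G^N$, contradicting the freeness of $\langle F, G \rangle$.

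The hard part will be the final rigidity step, namely passing from the measure equality $\mu_{f_i} = \mu_{g_i}$ to a same-exponent compositional relation $f_i^{a_i} = g_i^{a_i}$. The Levin--Przytycki classification admits exceptional configurations (flexible Latt\`es maps and polynomial symmetries of power/Chebyshev type) for which the relation must be phrased more delicately, and the freeness of the global semigroup $\langle F, G\rangle$ needs to be leveraged coordinate-by-coordinate to rule these out and to guarantee that the coordinate-wise exponents can be aligned via LCM.
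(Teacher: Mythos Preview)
Your argument tracks the paper's proof through the small-points and equidistribution steps (the paper applies equidistribution on each $\mathbb{P}^1$ factor at every place rather than Yuan on the product, but the outcome $\mu_{f_i}=\mu_{g_i}$ is the same). The genuine gap is in your final rigidity step.

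The inference from $\mu_{f_i}=\mu_{g_i}$ with $\deg f_i=\deg g_i$ to a common-iterate relation $f_i^{a_i}=g_i^{a_i}$ is false in general. Take $f(z)=z^2$ and $g(z)=-z^2$: they share their maximal-entropy measure and degree, yet $f^a(z)=z^{2^a}$ while $g^a(z)=-z^{2^a}$ for every $a\ge 1$, so no iterates agree. The non-freeness of $\langle f,g\rangle$ comes instead from $f\circ g=f\circ f$. In the power, Chebyshev and Latt\`es families the catalogue of relations forced by $\mu_f=\mu_g$ is strictly richer than ``common iterate'', so there is no exponent to feed into an LCM. Your suggestion to ``leverage freeness coordinate-by-coordinate'' cannot work as stated: freeness of $\langle F,G\rangle$ is the hypothesis you are trying to contradict, not a tool available inside each factor.

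The paper closes the argument differently. From $\mu_{f_i,v}=\mu_{g_i,v}$ at all places it gets $\hat h_{f_i}=\hat h_{g_i}$, hence $\Prep(f_i)=\Prep(g_i)$, and then invokes the Tits-alternative of Bell--Huang--Peng--Tucker \cite[Corollary~4.11]{BHPT}: if two rational maps of degree $\ge 2$ share their preperiodic set, the semigroup they generate is not free. This yields only some word relation $w_1(f_1,g_1)=w_2(f_1,g_1)$, not a common iterate. To lift this to $\langle F,G\rangle$ the paper proves an induction (Proposition~\ref{prop:freesemicriterion}): the key lemma (Lemma~\ref{lem: preperiodic-stable-composition}) shows that every element of $\langle f_i,g_i\rangle$ still has preperiodic set equal to $\Prep(f_i)$, so the two words $w_1(f_2,g_2)$ and $w_2(f_2,g_2)$ again share preperiodic points, and \cite{BHPT} applies once more to produce a relation holding in the first two coordinates simultaneously; iterating across coordinates gives a relation in $\langle F,G\rangle$. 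This inductive propagation of semigroup relations, rather than an LCM of common-iterate exponents, is the missing ingredient in your proposal.
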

Following Jonsson \cite[\S1]{Jo99}, a polynomial skew product on $\C^2$ of degree $d\geq 2$ is a map of the form $f(x,y)=(p(x),q(x,y))$, with respect to some choice of coordinates $(x,y)$, where $p$ and $q$ are polynomials of degree $d$ where $p(x)=x^d+O(x^{d-1})$ and $q(x,y)=y^d+O_x(y^{d-1}).$ In fact, $f$ is a regular polynomial map of $\C^2$. In other words, it extends to a holomorphic mapping of $\P^2$ which we still denote by $f$. Consider homogeneous coordinates $[x:y:z]$ on the projective plane  $\P^2_{\C}$, we identify $(x,y)\in \mathbb{A}^2_{\C}$ with $[x:y:1]\in \P^2_{\C}$. The extension of $f$ is thus given by $$f([x:y:z])=\left[z^dp\left(\frac{x}{z}\right):z^dq\left(\frac{x}{z},\frac{y}{z}\right):z^d\right].$$ 
\indent 
The dynamics of polynomial skew products have garnered significant attention recently, e.g., \cite{AB23}, \cite{ABDPR16}, \cite{JZ23}, \cite{JZ20}, \cite{UK20}. On one hand, these maps preserve a foliation of vertical lines, allowing the use of one-dimensional tools. On the other hand, their dynamics differ markedly from those in one dimension. For instance, polynomial skew products have been used to construct the first example of endomorphisms on $\P^k$, $k \geq 2$, that possess wandering Fatou domains in \cite{ABDPR16}, a phenomenon not observed in one-dimensional dynamics due to Sullivan’s non-wandering domain theorem. Consequently, polynomial skew products are considered a foundational gateway for studying higher-dimensional dynamics.

In this paper, we construct canonical height associated to polynomial skew products and further study some of its arithmetic properties, see sections \S \ref{sec:TitAlternativepolyskew}, \S \ref{sec:arithmeticdynpolyskewprods}, and Appendix \ref{appenA} for our main finding. This establishment leads to the following theorem which confirms that the question \ref{quest:HTmainquestion} also holds true for polynomial skew products defined $\overline{\mathbb{Q}}$.

\begin{defn}
    We call a polynomial endomorphism $F(x,y)$ on $\A^2$ a regular polynomial skew product of degree $d> 1$ defined over a field $K$ if there exists an affine linear automorphism $\sigma$ defined over $K$ such that 
    $$ \sigma \circ F\circ \sigma^{-1} (x,y) = (f(x), g(x,y))$$
    where $f(x)$ and $g(x,y)$ are polynomials defined over $K$, $\deg(f) = \deg(g) = d$ and $g(x,y)$ contains a non-trivial $y^d$ term.
\end{defn}
\begin{rmk}
    This guarantees that $F(x,y)$ can be extended to an endomorphism on $\P^2$.
\end{rmk}

\begin{thm} \label{thm:commonzerosofpolyskewprods}Let $F(x,y)$ and $G(x,y)$ be regular polynomial skew products of degree $d\geq 2$ on the affine plane $\A^2$ defined over a number field $K$. Suppose that $F$ and $G$ are compositionally independent and $C : \A^2\rightarrow \A^2$ (defined over $K$) is a morphism that is not a compositionally power of $F$ or $G$. Then there are positive integers $m$ and $n$ with the property that the set $\{(x,y)\}\subset \A^2_{\overline{K}}$ satisfying $$F^m(x,y)=G^n(x,y)=C(x,y)$$ is not a Zariski dense subset of $\A^2$.
\end{thm}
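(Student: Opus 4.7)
The plan is to extend the Hsia--Tucker--Noytaptim--Zhong strategy (used for Theorems~\ref{thm:Henontypemain} and \ref{thm:commonzerosofendomorphism}) to polynomial skew products, leveraging the canonical height theory and Tits alternative developed in \S\ref{sec:TitAlternativepolyskew} and \S\ref{sec:arithmeticdynpolyskewprods}. Throughout I assume for contradiction that the common zero set $\{P \in \A^2_{\overline{K}} : F^m(P)=G^n(P)=C(P) \text{ for some } m,n\}$ is Zariski dense in $\A^2$, which is the meaningful form of the conclusion.

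First, I would simultaneously conjugate by a suitable affine automorphism so that $F(x,y) = (f(x), g(x,y))$ and $G(x,y) = (p(x), q(x,y))$ are in standard skew-product form, and write $C = (C_1, C_2)$ correspondingly. Projecting the identity $F^m(x,y) = G^n(x,y) = C(x,y)$ onto the first coordinate yields $f^m(x) = p^n(x) = C_1(x,y)$, which forces $C_1(x,y)$ to be independent of $y$ on the Zariski closure of the common zero set and reduces the first-coordinate question to the one-dimensional setting already handled in \cite{NZ24}.

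I then split into two cases according to the first-coordinate dynamics. If $f$ and $p$ are compositionally independent as polynomials on $\A^1$, then \cite[Theorem 1.3]{NZ24} applied to $f, p, C_1$ forces the set of base points $x \in \overline{K}$ with $f^m(x)=p^n(x)=C_1(x)$ to be non-Zariski-dense in $\A^1$, so the common zero set lies over finitely many base points $x_0$; on each vertical fibre $\{x=x_0\}$ the iterates degenerate to one-variable polynomial maps in $y$, to which \cite[Theorem 1.3]{NZ24} applies a second time, yielding the desired contradiction. The harder case is when $f$ and $p$ are compositionally dependent, say $f^M = p^N$; in this case compositional independence of the two-variable maps $F$ and $G$ must be detected purely in the vertical parts $g$ and $q$, and I would invoke the regular polynomial skew-product Tits alternative established in \S\ref{sec:TitAlternativepolyskew} to extract structural rigidity of the fibrewise dynamics.

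To close the compositionally dependent case I would argue via arithmetic equidistribution, in the spirit of \cite{HT17} and \cite{NZ24}. Using the canonical heights $\hat{h}_F$ and $\hat{h}_G$ for regular polynomial skew products constructed in Appendix~\ref{appenA}, the identity $F^m(P)=G^n(P)=C(P)$ yields $d^m\hat{h}_F(P)=\hat{h}_F(C(P))=O(h(P))$ together with the analogous bound for $\hat{h}_G$; if the common zero set were Zariski dense, letting $m,n$ grow along appropriate sequences would force $\hat{h}_F$ and $\hat{h}_G$ to vanish simultaneously on a Zariski dense subset of algebraic points, so that by Northcott's theorem both $F$ and $G$ share a Zariski dense set of points of small height. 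The equidistribution theorem for adelic line bundles would then identify the canonical measures attached to $F$ and $G$ on $\P^2$, contradicting compositional independence via the Tits alternative. I expect the central obstacle to be the equidistribution--rigidity step in the skew-product setting, since the canonical measures decompose nontrivially between base and fibre and characterising their coincidences is considerably more delicate than in the one-variable case treated in \cite{NZ24}.
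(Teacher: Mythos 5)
Your closing paragraph does land on the paper's actual strategy (small points via the skew-product canonical height, then arithmetic equidistribution, then a Tits-type alternative), but the step you yourself flag as ``the central obstacle'' --- getting from equality of the canonical measures $\mu_{F,v}=\mu_{G,v}$ at all places to compositional dependence of $F$ and $G$ --- is precisely the part of the argument you never supply, and it is not automatic. The skew-product Tits alternative of Corollary \ref{cor: general-skew-product-case} takes as input $\Prep(F)=\Prep(G)$, not equality of measures, so a bridge is needed: the paper gets it from Lemma \ref{lem:largestconvexhullsupport} (the filled Julia set is the polynomial hull of $\mathrm{supp}(\mu_{f,v})$, proved via a Calabi-type rigidity theorem of Yuan--Zhang), which converts $\mu_{F,v}=\mu_{G,v}$ into equality of the $v$-adic filled Julia sets $\{g_{F,v}=0\}=\{g_{G,v}=0\}$ for every $v$, and then from the local-to-global principle of Lemma \ref{lem:loctoglobskewprodpoly}, which converts that into $\Prep(F)=\Prep(G)$; only then does Corollary \ref{cor: general-skew-product-case} yield non-freeness and the contradiction. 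Without this chain your proposal stops exactly where the real work begins.

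Two secondary problems. First, your opening reduction assumes $F$ and $G$ can be simultaneously conjugated into skew-product form over the \emph{same} fibration; a regular polynomial skew product only comes with \emph{some} coordinate in which it is a skew product, and the two fibrations need not agree (the paper's Proposition \ref{prop: orthogonal-fiber-case} and Corollary \ref{cor: general-skew-product-case} exist precisely to handle transverse fibrations, and in the main proof the coordinate change $\sigma$ is tracked through the Galois-orbit measures rather than assumed away). Second, the case split on the base maps and the appeal to \cite[Theorem 1.3]{NZ24} in the first coordinate does not go through as stated: the first coordinate $C_1(x,y)$ of the arbitrary morphism $C$ may genuinely depend on $y$, so $f^m(x)=p^n(x)=C_1(x,y)$ is not a one-variable problem, and you have no hypothesis preventing $C_1$ from being a compositional power of $f$ or $p$; the paper avoids this entirely by running the height/equidistribution argument directly on $\A^2$ with no case division on the base dynamics.
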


The proof of Theorem\ref{thm:Henontypemain}, Theorem \ref{thm:commonzerosofendomorphism}, and Theorem \ref{thm:commonzerosofpolyskewprods} follows along the same lines. Let us discuss the case of {H}\'{e}non maps. The outline of the proof can be divided into four simple steps. Proceeding by a contradiction, suppose that there exists a generic sequence $\{p_i\}\subset \A^2_{\overline{K}}$ satisfying $f^{m_i}(p_i)=g^{n_i}(p_i)=c(p_i)$. Then
\begin{enumerate}
    \item[ \textbf{(I)}] the sequence  $\{p_i\}_{i\geq 1}$ is dynamically small with respect to both $f$ and $g$. This is done in Proposition \ref{prop:smallpoints}.
 \item[ \textbf{(II)}] applying the arithmetic equidistribution Theorem \ref{thm:equidistributionofsmallpoints} yields the equality of measures $\mu_{f,v}=\mu_{g,v}$ at all places $v\in M_K$.
  \item[ \textbf{(III)}] {H}\'{e}non type maps $f$ and $g$ must share the same set of periodic points.
   \item[ \textbf{(IV)}]  $f$ and $g$ must be compositionally dependent.
\end{enumerate}
The compositional dependence of $f$ and $g$ contradicts the assumption. \\

The first three steps are motivated by the seminal work of Baker-DeMarco \cite{BD11}, Petsche-Szpiro-Tucker \cite{PST12}, and Yuan-Zhang \cite{YZ17}. This strategy is commonly used in dynamical unlikely intersections questions. However, the last step is a notoriously hard problem in algebraic dynamics in general. In the context of {H}\'{e}non maps, Dujardin-Favre \cite{DF17} deduced the compositional dependence of two {H}\'{e}non maps (sharing the same set of periodic points) from an earlier work of Lamy \cite{Lam01} on the Tits alternative of $\mathrm{Aut}[\mathbb{C}^2]$. Inspired by the one-dimensional result of \cite[Corollary 4.12]{BHPT}, compositional dependence of two endomorphisms on $(\mathbb{P}^1)^n$ (when their set of preperiodic points coincide) can be obtained via a careful induction argument. The most intriguing situation is for regular polynomial skew products, by applying a detailed analysis with Ritt's polynomial decomposition Theorem and specialization arguments, we manage to deduce an analogy Tits alternative of regular polynomial skew products. 
\begin{thm}[See Corollary \ref{cor: general-skew-product-case} and Remark \ref{rmk: Tits-alternative-skew}]
    Let $F_1$ and $F_2$ be two regular polynomial skew products defined over $\C$. Then two following possibilities occur: either
    \begin{enumerate}
        \item  $\Prep(F_1) = \Prep(F_2)$; or
        \item  the semigroup $\langle F_1, F_2 \rangle$ generated by compositions contains a non-abelain free subsemigroup.
    \end{enumerate}
\end{thm}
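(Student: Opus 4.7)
The plan is to reduce to one-dimensional dynamics via the canonical projection, then analyze the fiber dynamics. Write $F_i(x,y) = (f_i(x), g_i(x,y))$ for $i=1,2$, where $f_i\in\C[x]$ has degree $d$ and $g_i(x,y)\in\C[x,y]$ contains a non-trivial $y^d$ term. The projection $\pi(x,y) = x$ satisfies $\pi\circ F_i = f_i\circ\pi$, hence induces a surjective semigroup homomorphism $\Phi:\langle F_1, F_2\rangle \twoheadrightarrow \langle f_1, f_2\rangle$. Any non-abelian free subsemigroup of the target lifts to one of the source, since distinct words downstairs remain distinct upstairs. Applying the one-dimensional Tits alternative \cite[Corollary 4.12]{BHPT} to the pair $(f_1, f_2)$, either $\langle f_1, f_2\rangle$ contains a non-abelian free subsemigroup---yielding conclusion (2)---or $\Prep(f_1) = \Prep(f_2)$ in $\A^1$.

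In the second case, classical rigidity (Ritt's polynomial decomposition theorem, together with its consequences for polynomials sharing preperiodic or Julia sets, developed by Medvedev--Scanlon and Baker--DeMarco) constrains $f_1$ and $f_2$ sharply: after replacing them by suitable iterates and conjugating by an affine transformation, either $f_1 = f_2$, or both are simultaneously conjugate to monomial or Chebyshev polynomials sharing a common iterate. In all such configurations, the set $P$ of base points that are periodic under both $f_1$ and $f_2$ is Zariski dense in $\A^1$. For each $x_0 \in P$ with $f_1^{p}(x_0) = x_0$ and $f_2^{q}(x_0) = x_0$, the iterates $F_1^{p}$ and $F_2^{q}$ preserve the fiber $\{x_0\}\times\A^1$, restricting there to one-variable polynomials $\phi_{1,x_0}, \phi_{2,x_0}$ of degrees $d^{p}, d^{q}$.

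Now apply the one-dimensional Tits alternative to each fiber pair $(\phi_{1,x_0}, \phi_{2,x_0})$. If at some $x_0\in P$ the fiber semigroup contains a non-abelian free subsemigroup, then any would-be relation $W_1 = W_2$ in $\langle F_1, F_2\rangle$ with matching base projection restricts to a relation on this fiber, contradicting freeness downstairs; hence $\langle F_1, F_2\rangle$ inherits a non-abelian free subsemigroup and conclusion (2) holds. Otherwise, $\Prep(\phi_{1,x_0}) = \Prep(\phi_{2,x_0})$ for every $x_0\in P$. Combining $\Prep(f_1) = \Prep(f_2)$ on the base with these fiber equalities and back-propagating under the $F_i$-preimages (a point is $F_i$-preperiodic iff its base coordinate is $f_i$-preperiodic and its fiber orbit is preperiodic under the appropriate return iterate), a specialization argument over the Zariski-dense family $P$ yields $\Prep(F_1) = \Prep(F_2)$, which is conclusion (1).

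The principal obstacle lies in the middle step: from the bare equality $\Prep(f_1) = \Prep(f_2)$ one must extract, via Ritt's decomposition, enough structural information to produce a Zariski-dense common-periodic base locus and to compare the fiber polynomials on it uniformly; the power-map and Chebyshev exceptional cases require separate treatment. A secondary delicate point is ensuring that the fiberwise specialization argument captures \emph{all} preperiodic points of $F_1$ and $F_2$, including those whose base orbit meets the periodic locus only after a long pre-period.
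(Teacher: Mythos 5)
Your reduction to the base and the lifting of free subsemigroups through the projection homomorphism (and through fiber restriction over a common periodic base point) are sound, but the argument breaks at two essential places. First, the claim that $\Prep(f_1)=\Prep(f_2)$ forces the set $P$ of \emph{common periodic} base points to be Zariski dense is false. Take $f_1(x)=x^2$ and $f_2(x)=-x^2$: they share all preperiodic points, namely $\{0\}\cup\mu_{\infty}$, yet the nonzero periodic points of $f_1$ are roots of unity of odd order while those of $f_2$ satisfy $x^{2^n-1}=-1$ and hence have even order, so $P=\{0\}$. Such pairs do occur as base maps of regular skew products (e.g. $F_1=(x^2,y^2)$, $F_2=(-x^2,y^2)$), and then your fiberwise analysis over $P$ carries essentially no information. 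Relatedly, your structural claim that base polynomials sharing preperiodic points have, after iteration and conjugation, a common iterate is inaccurate: the correct statement (Schmidt--Steinmetz, cf. Lemma \ref{lem: sharing-common-iterates-decompos}) is $f_i=\sigma_i\circ p^{m}$ with $\sigma_i$ symmetries of the common Julia set. The paper avoids this obstacle by a step you are missing: in Lemma \ref{lem: toward-Tits-alternative} the non-freeness of $\langle f_1,f_2\rangle$ is used to manufacture two \emph{words} $F_3,F_4\in\langle F_1,F_2\rangle$ whose first coordinates are the \emph{same} polynomial $f$, and only then does one work fiberwise over the infinite set $\Per(f)$.

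Second, even where $P$ is infinite, your concluding step --- from $\Prep(f_1)=\Prep(f_2)$ plus $\Prep(\phi_{1,x_0})=\Prep(\phi_{2,x_0})$ for $x_0\in P$, deduce $\Prep(F_1)=\Prep(F_2)$ by ``a specialization argument'' --- is exactly the hard content and is not formal. Over a base point that is strictly preperiodic (or periodic for only one of the two base maps, as in the example above), $\Prep(F_i)$ meets the fiber in the preimage, under the transition compositions of $g_i$ along the tail, of the fiberwise preperiodic set over the eventual cycle; these transition maps are different for $F_1$ and $F_2$, so agreement over common periodic base points does not propagate to such fibers. In the paper this implication is never proved fiberwise: the dichotomy direction ($\Prep(F_1)\neq\Prep(F_2)$ implies a free subsemigroup on iterates) is obtained in Remark \ref{rmk: Tits-alternative-skew} by citing the height/equidistribution-based Tits alternative of \cite{BHPT} (their Theorem 1.3) applied to the induced endomorphisms of $\P^2$, while the Ritt-decomposition machinery (Proposition \ref{prop: key-prop-alternative}, Corollary \ref{cor: general-skew-product-case}) goes in the \emph{opposite} direction, showing that $\Prep(F_1)=\Prep(F_2)$ excludes non-abelian free subsemigroups. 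A smaller gap: you assume both maps are skew products for the same vertical fibration, whereas the definition allows different affine coordinates; the mixed-fibration case needs separate treatment (cf. Lemma \ref{lem: reduce-to-same-fiberation} and Proposition \ref{prop: orthogonal-fiber-case}).
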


\noindent This solves the last step of the polynomial skew products case.
\section*{ Acknowledgements}
We are grateful to Liang-Chung Hsia and Tom Tucker who posed Question \ref{quest:HTmainquestion}  in \cite{HT17}.
Their work greatly motivates us to pursue this research direction. We to thank Jason Bell for helpful discussion and comments on an earlier version of the draft. The second author was supported in part by NSERC grant RGPIN-2022-02951.
	\section{Arithmetic dynamics of {H}\'{e}non type maps}
 \subsection{Absolute values on number fields}\label{subsec:absolutevalues}
Let $K$ be a number field and $\overline{K}$ be a fixed algebraic closure of $K$. Let $M_K$ be the set of places of $K$, that is, an equivalence class of nontrivial absolute values on $K$. For each place $v\in M_K$, $K_v$ denotes the corresponding completion of $K$ with respect to the absolute value $|\cdot|_v$ (determined up to equivalence). The absolute value $|\cdot|_v$ on $K$ is either standard archimedean absolute value or the $p$-adic absolute value satisfying $|p|_p=p^{-1}$ when restricted to $\Q$. For any non-zero element $x\in K$, the product formula 
$$\prod_{v\in M_K}|x|^{[K_v:\Q_v]}_v=1$$ holds. Denote by $\C_v$ the completion of the algebraic closure $\overline{K}$ of the number field $K$ with respect to the absolute value $|\cdot|_v$. By abuse notation, we still denote by $|\cdot|_v$ the unique extension to $\C_v$ of the absolute value on $K$. Note that $\C_v$ is both complete and algebraically closed.
\subsection{Green's functions}  Let $f$ and $g$ be two polynomial automorphisms of {H}\'{e}non type of the affine plane $\A^2$ defined over a number field $K$ of degree $d\geq 2$. 
We follow a construction and discussion of Green's functions associated to polynomial automorphisms on $\A^2_{K}$ of Ingram, Hsia and Kawaguchi \cite{In14},  \cite{Ka06}, \cite{Ka13}, and \cite{HK18}.  Set  $\|(x,y)\|_v=\max\{|x|_v,|y|_v\}$ and $\log^{+}(x)=\max\{\log(x),0\}$. As in \cite{Br65} and \cite{Hu86} (in archimedean setting),  the $v$-adic Green functions $G^+_{f,v}, G^-_{f,v}:\C^2_v\rightarrow\mathbb{R}_+$ are continuous functions and defined by
\begin{equation}
G_{f,v}^+(x,y)=\lim_{n\rightarrow\infty}\frac{1}{d^n}\log^+\|f^n(x,y)\|_v\quad\label{eq:Greenplus}\end{equation} and \begin{equation}\quad G_{f,v}^-(x,y)=\lim_{n\rightarrow\infty}\frac{1}{d^n}\log^+\|f^{-n}(x,y)\|_v\label{eq:Greenminus}\end{equation}
where $d$ is the common degree of $f$ and $f^{-1}.$ It was shown by Kawaguchi \cite[Theorem A(1)]{Ka13} that the limits $G^+_{f,v}$ and $G^-_{f,v}$ exist and nonnegative. They also satisfy the invariant properties 
\begin{equation}
    G_{f,v}^+\circ f=dG_{f,v}^+\quad\text{and}\quad G_{f,v}^-\circ f^{-1}=dG_{f,v}^-.\label{eq:invariantgreen}\end{equation}
We record interesting properties of Green functions for future reference. For archimedean $v=\infty$, we write $G^+_f$ and $G^{-}_f$ in place of $G^{+}_{f,\infty}$ and $G^{-}_{f,\infty}$, respectively, for simplicity.
\begin{thm} \label{thm:semipositivegreenfns} Let $G_{f,v}:=\max\{G^+_{f,v},G^-_{f,v}\}$ be the continuous and non-negative function on $\A^2_{K}$.
 The following statements hold true:
 \begin{enumerate}
     \item  $G_{f,v}-\log^+\|\cdot\|_v$ extends to a continuous function to $\P^2_K$.
     \item  $G_{f,v}=\log^+\|\cdot\|_v$ for all but finitely many $v\in M_K$.
     \item  $G_f$ is plurisubharmonic function for each archimedean place $v$.
     \item  $G_{f,v}$ is the uniform limit of the sequence of continuous function $$\max\{d^{-n}\log^+\|f^n\|_v,d^{-n}\log^+\|f^{-n}\|_v\}.$$
 \end{enumerate}
\end{thm}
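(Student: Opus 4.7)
The plan is to derive all four properties from a single one-step comparison estimate: for a H\'{e}non-type $f$ of degree $d$ there is a constant $C_v \geq 0$, depending on $f$ and $v$ and equal to $0$ for all but finitely many $v \in M_K$, such that
$$\bigl|\max\{\log^+\|f(P)\|_v,\ \log^+\|f^{-1}(P)\|_v\} - d \log^+\|P\|_v\bigr| \leq C_v \quad \text{for every } P \in \C_v^2.$$
The upper bound is immediate from the polynomial expressions of $f$ and $f^{-1}$. The lower bound is the nontrivial content and rests on regularity of H\'{e}non maps: on $\P^2$ the indeterminacy point $p_+$ of $\tilde f$ is disjoint from the indeterminacy point $p_-$ of $\tilde f^{-1}$, and each sends the line at infinity to the other map's indeterminacy point. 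In a $v$-adic neighborhood of $\{z=0\}$ this forces at least one of $\|f(P)\|_v, \|f^{-1}(P)\|_v$ to be comparable to $\|P\|_v^d$. I would establish this estimate in the three standard affine charts of $\P^2$, with the non-archimedean case reduced to tracking the dominant monomials of $f$ and $f^{-1}$ near $p_+$ and $p_-$ respectively.

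With the estimate in hand, item $(4)$ is a telescoping argument. Set $\phi_n(P) := \max\{d^{-n}\log^+\|f^n(P)\|_v,\ d^{-n}\log^+\|f^{-n}(P)\|_v\}$. Applying the comparison estimate at $f^n(P)$ and $f^{-n}(P)$ together with the identities $f\circ f^n = f^{n+1}$ and $f^{-1}\circ f^{-n}=f^{-(n+1)}$ yields
$$\|\phi_{n+1}-\phi_n\|_{\infty} \leq C_v\, d^{-n-1},$$
so $\{\phi_n\}$ is uniformly Cauchy and converges uniformly to a continuous function $\Psi_v$. The pointwise inequalities $\phi_n \geq d^{-n}\log^+\|f^{\pm n}\|_v$ pass to the limit to give $\Psi_v \geq \max\{G^+_{f,v}, G^-_{f,v}\}=G_{f,v}$, while the reverse inequality $\Psi_v \leq G_{f,v}$ follows from $\max\{a,b\}\leq \max\{a,0\}+\max\{b,0\}$ and Kawaguchi's existence result. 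This gives $(4)$ and, as a by-product, the continuity and nonnegativity of $G_{f,v}$. Item $(3)$ for archimedean $v$ then follows because each $d^{-n}\log^+\|f^n\|$ is plurisubharmonic (a max of $\log$ of moduli of polynomials with $0$), the max of two psh functions is psh, and a uniform limit of psh functions is psh.

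For item $(1)$, iterating the one-step estimate gives the global bound $|G_{f,v}(P)-\log^+\|P\|_v|\leq C_v/(d-1)$ on $\C_v^2$, so $G_{f,v}-\log^+\|\cdot\|_v$ is continuous and bounded on the affine chart $\{z\neq 0\}$. Passing to the other two standard charts of $\P^2_K$ and rewriting $\log^+\|\cdot\|_v$ in homogeneous coordinates turns this into a continuous extension across the line at infinity. For item $(2)$, take $v$ outside the finite set $S$ consisting of the archimedean places together with primes at which the coefficients of $P$ fail to be $v$-integral or at which $|\delta|_v\neq 1$. For such $v$ the ultrametric inequality yields the exact equalities
$$\log^+\|f(x,y)\|_v = d\log^+\|(x,y)\|_v = \log^+\|f^{-1}(x,y)\|_v,$$
which forces $C_v=0$ and $\phi_n \equiv \log^+\|\cdot\|_v$ for all $n$, hence $G_{f,v}=\log^+\|\cdot\|_v$.

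The main obstacle is the lower half of the comparison estimate at non-archimedean places of bad reduction, where the transparent complex-analytic fact that $f$ and $f^{-1}$ cannot both be small at infinity must be recast in terms of $v$-adic ball lemmas and a careful bookkeeping of leading monomials in each chart of $\P^2$ near $p_\pm$. Once this is established, the remaining arguments are adaptations of the Bedford--Smillie / Hubbard--Oberste-Vorth Green-function theory to the $v$-adic setting, following the framework of Ingram, Kawaguchi, and Hsia--Kawaguchi cited above.
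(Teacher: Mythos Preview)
The paper does not give a proof here; it simply cites \cite{BS91}, \cite{In14}, \cite{Ka13}, and \cite{DF17} for the four items, so your proposal is sketching the content of those references. Your central one-step estimate is correct and is indeed the analytic expression of regularity of the pair $(f,f^{-1})$.

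There is, however, a gap in your telescoping for (4). Writing $a_k = \log^+\|f^k(P)\|_v$, the one-step estimate at $f^n(P)$ controls $\max\{a_{n+1}, a_{n-1}\}$ in terms of $a_n$, and at $f^{-n}(P)$ it controls $\max\{a_{-n+1}, a_{-n-1}\}$ in terms of $a_{-n}$; neither application yields the needed lower bound $\max\{a_{n+1}, a_{-n-1}\} \geq d\max\{a_n, a_{-n}\} - C_v$. When, say, $a_n \geq a_{-n}$ and the lower bound at $k=n$ is saturated by $a_{n-1}$ rather than by $a_{n+1}$, you must propagate the inequality backward through $a_{n-1}, a_{n-2}, \ldots$ all the way to $a_{-n-1}$; this is an inductive escape argument (essentially Kawaguchi's filtration into $V^+, V^-, V^0$), not a single telescoping step. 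A cleaner route, once Kawaguchi's global bound $|G_{f,v} - \log^+\|\cdot\|_v| \leq C_v/(d-1)$ is available, is to apply it at the points $f^{\pm m}(P)$ and combine with $G^{\pm}_{f,v}\circ f^{\pm 1} = d\,G^{\pm}_{f,v}$ to obtain $|\phi_m - G_{f,v}| \leq C_v d^{-m}/(d-1)$ directly. Separately, your displayed equalities in the argument for (2) are false as written: at a good prime with $|x|_v > 1 \geq |y|_v$ one computes $\|f(x,y)\|_v = |x|_v \neq |x|_v^d$. Only the \emph{maximum} satisfies $\max\{\log^+\|f\|_v,\ \log^+\|f^{-1}\|_v\} = d\log^+\|\cdot\|_v$, i.e.\ $C_v=0$, which is what your scheme actually requires.
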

\begin{proof}
For (1), it was discussed in \cite{BS91} for archimedean and in \cite{In14} and \cite{Ka13} for non-archiemedean. For (2), polynomial automorphisms on $\A^2_K$ has good reduction at $v$ except for finitely many places $v$, see \cite[Theorem B(1)]{Ka13}.  For (3), Bedford and Smillie proved in \cite[Proposition 3.8]{BS91}. For (4), it was verified by \cite[Proposition 1.10(3)]{DF17}.
\end{proof}
\subsection{Canonical height functions} Recall the standard Weil height $h:\A^2_{\overline{K}}\rightarrow\R$ of any point $p\in \A^2_{\overline{K}}$ is defined by
$$h(p)=\frac{1}{\deg(p)}\sum_{v\in M_K}\sum_{q\in \mathrm{Gal}(\overline{K}/K)\cdot p}N_v\log^+\|q\|_v$$ where $N_v=[K_v:\Q_v]$ denotes the degree of the field extension of the completion of $K$ over the completion of $\Q$ relative to $|\cdot|_v$ and  $\deg(p)$ denotes the cardinality of $\mathrm{Gal}(\overline{K}/K)\cdot p$ the Galois orbit of $p$.\\

\noindent Then we also define the canonical height $\hat{h}_f:\A^2_{\overline{K}}\rightarrow\R$ associated to $f$ is thus given by
$$\hat{h}_f(p)=\frac{1}{\deg(p)}\sum_{v\in M_K}\sum_{q\in \mathrm{Gal}(\overline{K}/K)\cdot p}N_v (G_{f,v}^+(q)+G_{f,v}^-(q)).$$  However, it is more convenient for us to work with a compatible height (cf. Theorem \ref{thm:canonicalheight}(1)) $$\tilde{h}_f(p)=\frac{1}{\deg(p)}\sum_{v\in M_K}\sum_{q\in \mathrm{Gal}(\overline{K}/K)\cdot p}N_vG_{f,v}(q)$$ where $G_{f,v}(\cdot)=\max\{G_{f,v}^+(\cdot),G_{f,v}^-(\cdot)\}$. Here and what follows, we refer $\tilde{h}$ as our canonical height associated to $f$.
\begin{rmk}
We shall soon see that the canonical height $\tilde{h}_f$  is associated to a continuous semipositive adelic metric of the ample line bundle $\mathcal{O}_{\P^2}(1)$ (in the sense of Zhang \cite{Zh95}).
\end{rmk}
\begin{thm} \label{thm:canonicalheight}  Let $K$ be a number field and $h$ be the Weil height on $\A^2_{\overline{K}}$. Let $\hat{h}_f,\tilde{h}_f$ be as  above. Then
	\begin{enumerate}
		\item $\hat{h}_f/2\leq \tilde{h}_f\leq \hat{h}_f$.
		\item $\tilde{h}_f=h+O(1)$
		\item For $p\in \A^2_{\overline{K}}$, $\tilde{h}_f(p)=0$ if and only if $p\in \mathrm{Per}(f)$.\\ Here $\mathrm{Per}(f)$ denotes the set of all periodic points of $f$.
	\end{enumerate}
\end{thm}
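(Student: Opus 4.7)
\medskip

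The plan is to handle the three claims in order, exploiting the elementary inequality $\tfrac{1}{2}(a+b) \leq \max(a,b) \leq a+b$ valid for non-negative reals, Theorem \ref{thm:semipositivegreenfns}, and a Northcott-type finiteness argument.

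For (1), I would note that since $G_{f,v}^{\pm}\geq 0$ on $\A^2_{\C_v}$ (by \cite[Theorem A(1)]{Ka13}, cited in the paper), the pointwise inequality
\[
\tfrac{1}{2}\bigl(G_{f,v}^+(q)+G_{f,v}^-(q)\bigr)\;\leq\;G_{f,v}(q)\;=\;\max\{G_{f,v}^+(q),G_{f,v}^-(q)\}\;\leq\;G_{f,v}^+(q)+G_{f,v}^-(q)
\]
holds for every $v$ and every $q$. Summing with the weights $N_v/\deg(p)$ over places and Galois conjugates yields (1) at once.

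For (2), the idea is to reduce $\tilde{h}_f - h$ to a bounded place-by-place comparison. By Theorem \ref{thm:semipositivegreenfns}(1), for every $v$ the function $G_{f,v} - \log^+\|\cdot\|_v$ extends continuously to the compact space $\P^2_{\C_v}$, hence is bounded by some $C_v$. By Theorem \ref{thm:semipositivegreenfns}(2), we have $G_{f,v}=\log^+\|\cdot\|_v$ for all $v$ outside a finite set $S\subset M_K$, so $C_v=0$ off $S$. Summing over the Galois orbit and using $\sum_{v}N_v=[K:\Q]$ at the finitely many bad places gives $|\tilde{h}_f(p)-h(p)|\leq \sum_{v\in S}N_v C_v$, a constant independent of $p$.

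For (3), the ``if'' direction is immediate from the invariance \eqref{eq:invariantgreen}: if $f^k(p)=p$ for some $k\geq 1$, then applying $G_{f,v}^+\circ f=dG_{f,v}^+$ (and its $k$-th iterate) to $p$ gives $G_{f,v}^+(p)=d^k G_{f,v}^+(p)$, which forces $G_{f,v}^+(p)=0$; the same argument with $f^{-1}$ shows $G_{f,v}^-(p)=0$, so $G_{f,v}(p)=0$ at every place and every Galois conjugate (since $\mathrm{Per}(f)$ is Galois-invariant), hence $\tilde{h}_f(p)=0$. Conversely, if $\tilde{h}_f(p)=0$, then (1) gives $\hat{h}_f(p)=0$, and non-negativity forces $G_{f,v}^+(q)=G_{f,v}^-(q)=0$ for every Galois conjugate $q$ and every place $v$. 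The key observation is then that $G_{f,v}^+\circ f=dG_{f,v}^+$ implies $G_{f,v}^-\circ f=d^{-1}G_{f,v}^-$, so the entire forward orbit $\{f^n(p)\}_{n\geq 0}$ satisfies $G_{f,v}^{\pm}(f^n(p))=0$ for all $v$. Thus $\hat{h}_f(f^n(p))=0$, and by (2) their Weil heights are uniformly bounded; Northcott's theorem (applied over the fixed number field of definition of $p$) forces this forward orbit to be finite, so $p$ is preperiodic, hence periodic since $f$ is an automorphism.

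The main obstacle I anticipate is being careful in (3) about two subtleties: first, that $\mathrm{Per}(f)$ and the vanishing loci of the Green functions are Galois-invariant so that the identities at a point $p$ propagate through its Galois orbit (and vice versa); second, that the appeal to Northcott is valid only because all iterates $f^n(p)$ lie in a fixed number field $K(p)$ of bounded degree over $K$. The remainder is a matter of bookkeeping between the two functions $\hat{h}_f$ and $\tilde{h}_f$ via part (1).
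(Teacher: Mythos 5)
Your proposal is correct. Part (1) coincides with the paper's own argument: the elementary inequality $\tfrac{1}{2}(a+b)\leq\max\{a,b\}\leq a+b$ applied pointwise to the non-negative Green functions and then averaged over places and Galois conjugates. For (2) and (3), however, the paper does not argue at all — it simply cites Kawaguchi \cite{Ka13} (Theorem 6.3 and Proposition 6.5) — whereas you reprove these statements from the properties already recorded in Theorem \ref{thm:semipositivegreenfns}, the invariance relations \eqref{eq:invariantgreen}, and Northcott's theorem. Your route essentially reconstructs Kawaguchi's proof and has the merit of making the section self-contained: (2) reduces to the place-by-place boundedness of $G_{f,v}-\log^+\|\cdot\|_v$, which vanishes outside a finite set of places, and (3) follows from the vanishing of $G^{\pm}_{f,v}$ along the forward orbit (using $G^-_{f,v}\circ f=d^{-1}G^-_{f,v}$), the height comparison from (2), and Northcott over the fixed field $K(p)$, with injectivity of $f$ upgrading preperiodicity to periodicity. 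Two small points to watch: in (2), at non-archimedean places the continuity statement of Theorem \ref{thm:semipositivegreenfns}(1) yields boundedness only when read on the compact Berkovich space $\P^{2,\mathrm{an}}_{\C_v}$ (alternatively, invoke the explicit bounds $|G^{\pm}_{f,v}-\log^+\|\cdot\|_v|\leq C_v$ from \cite{Ka13} directly); and in (3), as you note, one must use that Galois conjugation commutes with $f$ (defined over $K$) so that the vanishing of the Green functions propagates to the whole Galois orbit of every iterate before summing the local terms.
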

\begin{proof}  For (1), it is an easy observation that $(a+b)/2\leq \max\{a,b\}\leq a+b$ for any real numbers $a$ and $b$. For (2) and (3), see \cite[Theorem 6.3 and Proposition 6.5]{Ka13}.
\end{proof}
Given $c$ is any morphism $\A^2\rightarrow\A^2$ defined on $K$. Our first result asserts that a generic sequence $\{p_n\}\subset \A^2_{\overline{K}}$ satisfying the equation
 $$f^n(p_n)=g^n(p_n)=c(p_n)$$
forms a sequence of small points. 
\begin{prop} \label{prop:smallpoints}  Let $K$ be a number field. Suppose that $\{p_n\}$ is a generic sequence in $\A^2_{\overline{K}}$ satisfying $$f^n(p_n)=c(p_n)$$ for all $n\geq 1$ where $f$  is any polynomial automorphism defined over $K$ and $c$ is an arbitrary self-morphism on $\A^2$ defined over $K$. Then the sequence $\{p_n\}_{n\geq1}$ is $\tilde{h}_f$-small. More precisely,  $$\lim_{n\rightarrow\infty}\tilde{h}_f(p_n)=0.$$
\end{prop}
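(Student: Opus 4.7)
The plan is to exploit the transformation laws of the Green functions $G^\pm_{f,v}$ under $f$ and compare them with the polynomial degree bound on $c$. From \eqref{eq:invariantgreen} one reads off $G^+_{f,v}\circ f = d\, G^+_{f,v}$ and $G^-_{f,v}\circ f = d^{-1}\, G^-_{f,v}$; summing over places and Galois conjugates, these lift to scaling identities $h^+_f(f^n(p_n)) = d^n\, h^+_f(p_n)$ and $h^-_f(f^n(p_n)) = d^{-n}\, h^-_f(p_n)$ for the partial heights $h^\pm_f(p) := \frac{1}{\deg(p)}\sum_{v,q} N_v G^\pm_{f,v}(q)$. Substituting $f^n(p_n) = c(p_n)$ produces the master identity
\begin{equation*}
d^n\, h^+_f(p_n) + d^{-n}\, h^-_f(p_n) \;=\; \hat h_f(c(p_n)).
\end{equation*}

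Next I would bound the right-hand side linearly in $\hat h_f(p_n)$. Since $c$ is a polynomial morphism of $\A^2$ of some fixed degree $D$, functoriality of the Weil height gives $h(c(p)) \leq D\, h(p) + O(1)$, and the comparability $\hat h_f = h + O(1)$ (a consequence of Theorem \ref{thm:canonicalheight}(1)--(2)) yields constants $C, C' > 0$ with $\hat h_f(c(p_n)) \leq C\,\hat h_f(p_n) + C' = C(h^+_f(p_n) + h^-_f(p_n)) + C'$. Substituting into the master identity and rearranging, once $n$ is large enough that $d^n > 2C$, produces
\begin{equation*}
(d^n - C)\, h^+_f(p_n) \;\leq\; (C - d^{-n})\, h^-_f(p_n) + C',
\end{equation*}
so that $h^+_f(p_n)$ decays at rate $d^{-n}$ relative to the size of $h^-_f(p_n)$.

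The symmetric bound on $h^-_f(p_n)$ should come from the same analysis applied to $f^{-1}$: rewriting the equation as $p_n = f^{-n}(c(p_n))$ and using $G^-_{f,v}\circ f^{-1} = d\, G^-_{f,v}$ yields $h^-_f(p_n) = d^n\, h^-_f(c(p_n))$, and the semipositive bound $h^-_f(c(p_n)) \leq h(c(p_n)) + O(1) \leq D\, h(p_n) + O(1)$ furnished by Theorem \ref{thm:semipositivegreenfns} then feeds back into the previous step. Combining both one-sided estimates, one gets $h^+_f(p_n)\to 0$ and $h^-_f(p_n)\to 0$ simultaneously, whence the sandwich $\tilde h_f \leq \hat h_f = h^+_f + h^-_f$ from Theorem \ref{thm:canonicalheight}(1) finishes the argument.

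The main obstacle is precisely this symmetric step: the raw inequality above permits $h^-_f(p_n)$ to grow as fast as $d^n$ a priori, and without an independent upper bound one cannot close the loop by a direct height comparison. The genericity of $\{p_n\}$ is what rules out the $p_n$ accumulating on the backward escape locus of $f$ (where $G^-_{f,v}$ stays uniformly positive); coupled with the scale mismatch that the iterate $f^n$ has degree $d^n$ while $c$ has degree $D$ independent of $n$, the master identity forces the Green-function mass at $p_n$ to shrink to zero in both the forward and backward directions.
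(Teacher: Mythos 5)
Your bookkeeping up to the master identity is correct: $\hat{h}_f(c(p_n)) = d^n h^+_f(p_n) + d^{-n}h^-_f(p_n)$, and together with $\hat{h}_f(c(p_n)) \le C\hat{h}_f(p_n) + C'$ this controls the $G^+$-mass. But the proposal does not prove the proposition, and you concede as much: nothing in it shows $h^-_f(p_n)\to 0$, which is needed because $\tilde{h}_f \ge \hat{h}_f/2 \ge h^-_f/2$ by Theorem \ref{thm:canonicalheight}(1). Your attempted symmetric step is circular --- $p_n = f^{-n}(c(p_n))$ together with $G^-_{f,v}\circ f^{-1}= d\,G^-_{f,v}$ just reproduces $h^-_f(p_n) = d^n h^-_f(c(p_n))$, i.e.\ an upper bound of order $d^n h(p_n)$, which says nothing --- and the closing paragraph about genericity and the ``backward escape locus'' is a heuristic, not an argument. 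The gap is genuine and cannot be closed by height comparisons alone: for the constant morphism $c\equiv q_0$ the unique solutions of $f^n(p)=c(p)$ are $p_n = f^{-n}(q_0)$, for which $h^-_f(p_n) = d^n h^-_f(q_0)$ grows exponentially whenever $h^-_f(q_0)>0$; so any complete proof must feed the genericity hypothesis (or a restriction on $c$) into the estimates in an essential, quantitative way, which your proposal never does.

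For comparison, the paper's proof is a two-line argument that takes precisely the step you hesitated over: invoking \eqref{eq:invariantgreen}, it asserts in \eqref{eq:canheightiterate} the identity $\tilde{h}_f(f^n(p_n)) = d^n \tilde{h}_f(p_n)$ and combines it with \eqref{eq:heightbound} to get $(d^n - \deg c)\tilde{h}_f(p_n)\le O(1)$. Since $G^-_{f,v}\circ f = d^{-1}G^-_{f,v}$, the function $G_{f,v}=\max\{G^+_{f,v},G^-_{f,v}\}$ does not scale by $d$ under $f$, so that displayed equality is exactly the point your finer decomposition refuses to grant: what one actually gets is $d^n h^+_f(p_n) \le \tilde{h}_f(f^n(p_n)) \le d^n\tilde{h}_f(p_n)$, and only the $G^+$-part is controlled by the subsequent inequality. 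So your attempt, while incomplete, correctly isolates the crux --- the decay of the $G^-$-mass along the sequence --- and neither your final heuristic nor the asserted identity in the paper's own proof supplies an argument for it; this is where additional input (and likely a sharpened statement or use of the hypotheses) is required.
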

\begin{proof}  Using the invariant property (\ref{eq:invariantgreen}) of the Green functions $G_{f,v}^+$ and $G_{f,v}^-$, it follows that  \begin{align}\tilde{h}_f(f^n(p_n))&=\frac{d^n}{\deg(p_n)}\sum_{v\in M_K}N_v\max\{G_{f,v}^+(p_n),G_{f,v}^-(p_n)\}\notag\\&=d^n\tilde{h}_f(p_n).\label{eq:canheightiterate}\end{align} 
For points $p=(x,y)\in\A^2_{\overline{K}}$, fix an embedding $\A^2_{\overline{K}}\hookrightarrow\P^2_{\overline{K}}$ by sending $(x,y)$ to $[x:y:1]$. 
 We recall  a property of rational maps $\phi : \P^2\rightarrow\P^2$ which states
\begin{equation}h(\phi(p))\leq (\deg \phi)h(p)+O(1)\notag\end{equation} for each $p\in \P^2_{\overline{\mathbb Q}}$ away from the indeterminacy locus of $\phi$ (cf. \cite[\S B.2]{HS00}).   Applying Theorem \ref{thm:canonicalheight} (2), we have 
\begin{equation}\tilde{h}_f(c(p_n))\leq  (\deg c)\tilde{h}_f(p_n)+ O(1)\label{eq:heightbound}\end{equation}
Combining equation (\ref{eq:canheightiterate}) and inequality (\ref{eq:heightbound}), we obtain that
$$(d^n-(\deg c))\tilde{h}_f(p_n)\leq O(1)$$ where the implied constant is independent of $n$. As $d\geq 2$ and letting $n\rightarrow\infty$, it follows that $\tilde{h}_f(p_n)$ must tend to zero.
\end{proof}
\subsection{Arithmetic Equidistribution} In order to apply Yuan's equidistribution  theorem, it is necessary to associated height to semipositive adelic metrics on ample line bundles in reminiscent of \cite{CL11} and \cite{Zh95}.  
For our purpose, we slightly modify the definition of semipositive adelic metrics on line bundles, for usual convention the reader may consult, for example \cite[\S 4]{GHT15}.
The following convention is due to Dujardin-Favre \cite{DF17}. Here, we fix an embedding $\A^2_{\overline{K}}\hookrightarrow \P^2_{\overline{K}}.$
\begin{defn}\label{defn:semipositadelicmetricsdef}
A collection $\{G_v\}$ of functions $G_v:\A^2_{\overline{K}}\rightarrow\R$ is called a semipositive adelic metric on the ample line bundle $\mathcal{O}_{\P^2}(1)$ is the has the following properties:
\begin{enumerate}
	\item[(SP 1)]  the function $G_v(p)-\log^+\|p\|_v$ extends to a continuous function in $\P^2_{K}$ for each place $v\in M_K$;
	\item[(SP 2)] $G_v(p)=\log^+\|p\|_v$ for all but finitely many $v$; 
	\item[(SP 3)] $G_v$ is plurisubhamonic for each archimedean $v$;
	\item[(SP 4)] for each non-archimedean $v$, the function $G_v$ is a uniform limit of positive multiple functions of the form $\log\displaystyle\max_{1\leq i\leq r}\{|P_i|_v\}$ with $P_i\in K[x_1,x_2]$.
\end{enumerate}
\end{defn}
We now can define a height function $h_G :\A^2_{\overline{K}}\rightarrow\R$ associated to the collection of semipositive adelic metric $\{G_{v}\}$ by
$$h_G(p):=\frac{1}{\deg(p)}\sum_{v\in M_K}\sum_{q\in \mathrm{Gal}(\overline{K}/K)\cdot p}N_vG_{v}(q)$$ such that $\sup_{p\in\A^2_{\overline{K}}}|h_G(p)-h(p)|<+\infty$.
\begin{prop} Let $f$ be any regular polynomial automorphism of degree $\geq 2$. The collection $\{G_{f,v}\}$ defines a semipositive adelic metric on the ample line bundle $\mathcal{O}_{\P^2}(1)$.
\end{prop}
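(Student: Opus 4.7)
The plan is to verify the four axioms (SP 1)--(SP 4) of Definition \ref{defn:semipositadelicmetricsdef} one at a time, drawing each directly from Theorem \ref{thm:semipositivegreenfns}. The first three axioms are almost tautological once I recall that $G_{f,v} = \max\{G_{f,v}^+, G_{f,v}^-\}$: (SP 1) is exactly Theorem \ref{thm:semipositivegreenfns}(1); (SP 2) is Theorem \ref{thm:semipositivegreenfns}(2), which expresses good reduction of $f$ at cofinitely many places; and for (SP 3) at archimedean $v$, $G_{f,v}$ is the pointwise maximum of the two plurisubharmonic functions $G_{f,v}^{\pm}$, hence plurisubharmonic.

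The only non-formal step is (SP 4), and here Theorem \ref{thm:semipositivegreenfns}(4) does essentially all the work. It gives $G_{f,v}$ as a uniform limit of $\max\{d^{-n}\log^+\|f^n\|_v,\ d^{-n}\log^+\|f^{-n}\|_v\}$, so I only need to rewrite each approximant in the ``polynomial max'' shape required by (SP 4). Writing the iterates as $f^n = (F_1^{(n)}, F_2^{(n)})$ and $f^{-n} = (\tilde F_1^{(n)}, \tilde F_2^{(n)})$ with components in $K[x,y]$, and using the elementary identity $\max\{\log^+ a, \log^+ b\} = \log\max\{1, a, b\}$ for $a, b \ge 0$, I obtain
$$\max\{\log^+\|f^n\|_v,\ \log^+\|f^{-n}\|_v\} = \log\max\{1,\ |F_1^{(n)}|_v,\ |F_2^{(n)}|_v,\ |\tilde F_1^{(n)}|_v,\ |\tilde F_2^{(n)}|_v\}.$$
Scaling by the positive constant $d^{-n}$ yields an approximant of the form $d^{-n}\log\max_{1\le i\le 5}|P_i|_v$ with $P_i \in K[x,y]$ (the constant $1$ being included among the $P_i$), which is exactly what (SP 4) requests; uniform convergence to $G_{f,v}$ is then Theorem \ref{thm:semipositivegreenfns}(4).

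I do not expect a genuine obstacle here, since the analytic content is already packaged inside Theorem \ref{thm:semipositivegreenfns}, and the remaining task is bookkeeping. The only subtleties worth double-checking are that the uniform convergence in Theorem \ref{thm:semipositivegreenfns}(4) is valid at every place (archimedean and non-archimedean alike, which is the case given the telescoping estimate $d^{-n}\log^+\|f^n\|_v$ used in its proof), and that the ``positive multiple'' appearing in (SP 4) is permitted to depend on the index $n$, so that the scalar $d^{-n}$ is a legitimate choice. Both points hold essentially by construction, so I expect the proof to be short.
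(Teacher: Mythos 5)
Your proposal is correct and is essentially the paper's argument: the paper's proof of this proposition is the one-line statement that it follows from Theorem \ref{thm:semipositivegreenfns}, whose four items correspond exactly to (SP 1)--(SP 4), and your verification (including the rewriting of the approximants in (SP 4) as positive multiples of $\log\max_i|P_i|_v$ and the psh-ness of the maximum at archimedean places) is just the detailed unpacking of that citation. No gap.
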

\begin{proof} This follows from Theorem \ref{thm:semipositivegreenfns}.
\end{proof}
It is useful to recall related definitions for stating an arithmetic equidistribution of small points. We follow \cite{Yu08}.
\begin{defn} \begin{enumerate}
    \item[1.] A sequence $\{p_n\}_{n\geq 1}$ of algebraic points in $\P^2_{\overline{K}}$ is called generic if any inifnite subsequence is Zariski dense in $\P^2$. Equivalently, there is no infinite subsequence of $\{p_n\}$ is contained in a proper closed subvariety of $\P^2$.
    \item[2.] A sequence $\{p_n\}_{n\geq 1}$ of algebraic points in $\P^2_{\overline{K}}$ is said to be small with respect to the canonical height $\tilde{h}$ if $\tilde{h}_f(p_n)\rightarrow0$ as $n\rightarrow\infty$.
\end{enumerate}
\end{defn}
\begin{thm} \label{thm:equidistributionofsmallpoints} (Equidistribution for points of small height of {H}\'{e}non maps) Let $f$ be an automorphism of {H}\'{e}non type defined over a number field $K$ such that $\deg f\geq 2$. Suppose that $\{p_n\}$ is a generic sequence on $\P^2_{\overline{K}}$ such that $\tilde{h}_f(p_n)\rightarrow0$ with respect to the arithmetic canonical height $\tilde{h}_f$. Then, for any place $v\in M_K$, the sequence of probability measure supported equally on the Galois orbit of $p_n$
	$$\frac{1}{\deg(p_n)}\sum_{q\in \mathrm{Gal}(\overline{K}/K)\cdot p_n}\delta_q$$ converges weakly to $\mu_{f,v}$ on the Berkovich analytic space $\P^2_{\mathrm{Berk},\C_v}$. 
\end{thm}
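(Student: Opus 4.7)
The plan is to realize the claimed equidistribution as a direct instance of Yuan's arithmetic equidistribution theorem \cite{Yu08} for small points with respect to a semipositive adelically metrized ample line bundle. All the structural input has been assembled in the preceding subsections: the collection $\{G_{f,v}\}$ forms a semipositive adelic metric on $\mathcal{O}_{\P^2}(1)$ in the sense of Definition \ref{defn:semipositadelicmetricsdef}, and the associated height $\tilde{h}_f$ differs from the Weil height by a bounded function (Theorem \ref{thm:canonicalheight}(2)), with zero locus equal to $\Per(f)$ by Theorem \ref{thm:canonicalheight}(3).

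First I would verify the smallness hypothesis of Yuan's theorem. Because $\tilde{h}_f\geq 0$ and vanishes on $\Per(f)$, which is Zariski dense in $\P^2$ (a standard consequence of the density of saddle periodic points for regular H\'{e}non type maps), the essential minimum of $\tilde{h}_f$ equals zero. Combined with the hypothesis $\tilde{h}_f(p_n)\to 0$ and the genericity of $\{p_n\}$, we are placed in exactly the setting of Yuan's theorem. It then yields weak convergence of the Galois-averaged Dirac measures on each Berkovich analytic space $\P^2_{\mathrm{Berk},\C_v}$ to the normalized Monge--Amp\`ere measure $c_1(\mathcal{O}_{\P^2}(1), G_{f,v})^{2}$ of the semipositive metric at $v$.

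Next I would identify this Monge--Amp\`ere measure with $\mu_{f,v}$. In the archimedean case, $\mu_{f,v}$ is by definition the Bedford--Smillie equilibrium measure $(\mathrm{dd}^c G_{f,v}^{+})\wedge(\mathrm{dd}^c G_{f,v}^{-})$, and its identification with $c_1(\mathcal{O}_{\P^2}(1), G_{f,v})^2$ (up to normalization) follows from the equality $G_{f,v}=\max\{G_{f,v}^{+}, G_{f,v}^{-}\}$, the plurisubharmonicity recorded in Theorem \ref{thm:semipositivegreenfns}(3), and a local Monge--Amp\`ere computation on the filled Julia set as in \cite{DF17}. In the non-archimedean case, $\mu_{f,v}$ is, by definition, the Chambert-Loir Monge--Amp\`ere measure attached to the Berkovich potential $G_{f,v}$; the identification is then immediate once property (SP 4) is combined with continuity of the Monge--Amp\`ere operator under uniform convergence of continuous semipositive metrics.

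The principal technical delicacy lies in the non-archimedean part: one must appeal to the Chambert-Loir and Thuillier formalism of Monge--Amp\`ere operators on Berkovich analytic spaces and check that the uniform approximation recorded in Theorem \ref{thm:semipositivegreenfns}(4), together with (SP 4), suffices to define $\mu_{f,v}$ intrinsically and to secure weak convergence against arbitrary continuous test functions on $\P^2_{\mathrm{Berk},\C_v}$. Once this bookkeeping is in place, Yuan's argument applies verbatim and delivers the stated equidistribution at every place $v\in M_K$.
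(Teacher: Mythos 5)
Your proposal follows essentially the same route as the paper: both reduce the statement to a direct application of Yuan's arithmetic equidistribution theorem for the semipositive adelically metrized bundle $(\mathcal{O}_{\P^2}(1),\{G_{f,v}\})$, with the measure-identification at each place being definitional (Bedford--Smillie at archimedean $v$, Chambert-Loir at non-archimedean $v$). The one point where you diverge is the verification of Yuan's smallness hypothesis, i.e.\ that $\tilde{h}_f(\P^2)=0$. The paper gets this with no extra dynamical input: Zhang's successive minima inequality bounds $\tilde{h}_f(\P^2)$ above by the essential minimum, and the very generic sequence $\{p_n\}$ with $\tilde{h}_f(p_n)\to 0$ already forces that essential minimum to vanish (nonnegativity of the Green functions gives the lower bound). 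You instead argue that the essential minimum vanishes because $\Per(f)$ is Zariski dense, citing density of saddle periodic points. Two cautions there: saddle periodic points are dense in $\mathrm{supp}(\mu_f)$ in the analytic topology, which does not by itself give Zariski density --- you would additionally need that $\mathrm{supp}(\mu_f)$ is not contained in an algebraic curve (e.g.\ because it is non-pluripolar), or that H\'enon type maps admit no periodic algebraic curves; and even granting Zariski density of $\Per(f)$, you still need Zhang's inequality (which you do not cite) to pass from ``essential minimum zero'' to ``$\tilde{h}_f(\P^2)=0$'', since Yuan's smallness condition is $\tilde{h}_f(p_n)\to \tilde{h}_f(\P^2)$. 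So your argument is correct in substance but routes through an unnecessary (and only loosely justified) dynamical fact, where the paper's use of the given generic small sequence is shorter and self-contained.
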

\begin{proof} We directly apply the  equidistribution theorem of Yuan \cite[Theorem 3.1]{Yu08}. It suffices to check that $\tilde{h}_f(\P^2)=0.$ Recall Zhang's successive minima inequality \cite[Theorem 1.10]{Zh95} reads
	\begin{equation} \tilde{h}_{f}(\P^2)\leq \sup_{\mathrm{codim} Z=1}\inf_{p\in (\P^2\backslash Z)(\overline{K})}\tilde{h}_f(p)\label{eq:Zhangminima}\end{equation} where the supremum is taken over all closed subvarieties of codimension one in $\P^2$.
	The existence of the generic sequence $\{p_n\}$ with $\tilde{h}_f(p_n)\rightarrow0$, this immediately yields the  right hand side of the  inequality (\ref{eq:Zhangminima}) tends to $0$. Thus $\tilde{h}_f(\P^2)=0$ as desired. 
\end{proof}
\begin{rmk}  Let us record important notes in order. 
\begin{enumerate} \item[(i)] It was pointed out by Dujardin-Favre \cite[Corollary 1.19]{DF17} and Lee \cite[Corollary 7.5]{Le13} that the probability measure $\mu_{f,v}$ is $f$-invariant and has support on the so-called $v$-adic filled Julia set $K_{f,v}$ where 
	$$K_{f,v}=\{p\in \A^{2,\mathrm{an}}_{\C_v}: \sup_{n\in\Z}\|f^n(p)\|_v=O(1)\}.$$
Over $\mathbb{C}$, the probability measure satisfies $\mu_f:=dd^c G_f\wedge dd^c G_f$ and $f^*\mu_f=\mu_f$. Note also that $\mu_f:=dd^c G_f^+\wedge dd^cG^-_f$ is well-defined because of the continuity of the local potentials of $dd^cG_f^{\pm}$. For detailed exposition, the reader may infer \cite[\S 7.3]{FS99}. On the other hand, a non-archimedean analogue of measure $\mu_f$ was constructed as a probability measure on the Berkovich analytic space $\A^{2,\mathrm{an}}_{\C_v}$ in the sense of Chambert-Loir \cite{CL06, CL11}. In fact, the Green function $G_{f,v}$ induced a semipositive continuous metric $\|\cdot\|_{G_{f,v}}$ on line bundle $\mathcal{O}_{\P^2}(1)$ by setting $\|s\|_{G_{f,v}}:=\exp(-G_{f,v})$ where $s$ is the section of $\mathcal{O}_{\P^2}(1)$ corresponding to the constant function $1$ on $\A^{2,\text{an}}_{\mathbb{C}_v}$.
 \item[(ii)] Our Theorem \ref{thm:equidistributionofsmallpoints} is closely related to \cite[Theorem B]{Le13}. Strictly speaking, we can view $f$ and $f^{-1}$ as strongly regular pair in light of \cite[Definition 1.1]{Le13}. Hence, our Theorem is equivalent to Lee's equidistribution of small points in Theorem B. To make our article self-contained, we decide to state it as our current version.
 \end{enumerate}
\end{rmk} 
\subsection{Shared common zeros of iterated polynomial automorphisms} Recall the following result which was proved by Dujardin and Favre \cite[Lemma 6.3]{DF17}. It essentially says that the polynomial convex hull of $\mathrm{supp}(\mu_{f,v})$ is the filled Julia set $K_{f,v}$. Alternatively, we denote $K_{f,v}$ as the set $$\{p\in \A^{2,\mathrm{an}}_{\C_v}:\sup_{n\in\Z}|H^n(p)|_v<+\infty\}$$ where $|\cdot|_v$ is multiplicative seminorm on the ring $\C_v[X,Y]$ (corresponding to $p\in \A^{2,\mathrm{an}}_{\C_v}$) whose restriction on the ground field is the $v$-adic absolute value which we still denote by $|\cdot|_v$ by abuse notation. More details see \cite{Ber90}.
\begin{lem}\label{lem:convexhullofmeasure}  \cite[Lemma 6.3]{DF17} For any place $v$, $$\sup_{K_{f,v}}|P|_v=\sup_{\mathrm{supp}(\mu_{f,v})}|P|_v$$ for all $P\in \C_v[\A^2]$.
\end{lem}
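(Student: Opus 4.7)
The plan is to prove the two inequalities separately. The inequality $\sup_{\mathrm{supp}(\mu_{f,v})} |P|_v \le \sup_{K_{f,v}} |P|_v$ is immediate once we verify the set inclusion $\mathrm{supp}(\mu_{f,v}) \subseteq K_{f,v}$. Since $\mu_{f,v}$ is a compactly supported $f$-invariant probability measure and $f$ acts homeomorphically on $\A^{2,\mathrm{an}}_{\C_v}$, the support $\mathrm{supp}(\mu_{f,v})$ is itself $f$-invariant and compact, so every orbit $\{f^n(p)\}_{n \in \Z}$ starting in the support remains in the support and hence is bounded in both forward and backward time, placing it in $K_{f,v}$.

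The reverse inequality is the substantive part. Writing $M := \sup_{\mathrm{supp}(\mu_{f,v})} |P|_v$, the goal is to show $|P(p_0)|_v \le M$ for every $p_0 \in K_{f,v}$; equivalently, that $p_0$ lies in the polynomial convex hull of $\mathrm{supp}(\mu_{f,v})$. The driving mechanism is $f$-invariance of the support: since $f$ restricts to a bijection of $\mathrm{supp}(\mu_{f,v})$, we obtain
$$\sup_{\mathrm{supp}(\mu_{f,v})} |P \circ f^n|_v = \sup_{\mathrm{supp}(\mu_{f,v})} |P|_v = M$$
for every $n \in \Z$. Thus the family $\{P \circ f^n\}$ has unbounded degrees $d^n \deg P$ but a sup-norm fixed at $M$ on $\mathrm{supp}(\mu_{f,v})$, while the orbit $f^n(p_0) \in K_{f,v}$ stays bounded for all $n \in \Z$; these two facts together should pin down the value of $|P(p_0)|_v$.

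To carry out the conversion I would use a potential-theoretic comparison. For any non-constant polynomial $Q \in \C_v[X,Y]$, the function $\tfrac{1}{\deg Q}\log|Q|_v - G_{f,v}$ extends to a bounded-above function on $\P^2_{\C_v}$ (via the growth control on $G_{f,v}$ from Theorem \ref{thm:semipositivegreenfns}), and by an adapted maximum-principle argument, its sharp supremum is attained along $\mathrm{supp}(\mu_{f,v})$, which plays the role of the Shilov boundary of the uniform algebra generated by polynomials on $K_{f,v}$. Applying this to $Q = P$ and using $G_{f,v}(p_0) = 0$ would yield $\log|P(p_0)|_v \le \log M$, which is the desired inequality.

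The main obstacle is precisely the identification of $\mathrm{supp}(\mu_{f,v})$ with the Shilov boundary of the uniform algebra $A(K_{f,v})$ of polynomially approximable continuous functions, since this is what forces the sharp constant $M$ (rather than a cruder constant depending on $K_{f,v}$ alone) to appear in the limiting comparison. In the archimedean case this rests on the Bedford-Lyubich-Smillie description of the H\'enon measure in terms of saddle periodic points and their polynomial approximation properties; in the non-archimedean case one argues through the Berkovich-analytic formalism and the Chambert-Loir construction, coupling the semipositive adelic metric structure from Definition \ref{defn:semipositadelicmetricsdef} with the equidistribution of Galois orbits underlying Theorem \ref{thm:equidistributionofsmallpoints}.
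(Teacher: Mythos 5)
The easy half of your argument (the inclusion $\mathrm{supp}(\mu_{f,v})\subset K_{f,v}$, giving $\sup_{\mathrm{supp}(\mu_{f,v})}|P|_v\leq\sup_{K_{f,v}}|P|_v$) is fine and agrees with the paper. The reverse inequality, however, is not proved: the step you describe as ``an adapted maximum-principle argument'' whose sharp supremum is attained along $\mathrm{supp}(\mu_{f,v})$, together with the ``identification of $\mathrm{supp}(\mu_{f,v})$ with the Shilov boundary of $A(K_{f,v})$,'' is not a side obstacle --- it \emph{is} the content of the lemma, and you acknowledge you do not supply it. A maximum principle alone can only bound $|P|_v$ on $K_{f,v}$ by its supremum over some distinguished boundary of $K_{f,v}$; that this boundary may be taken to be $\mathrm{supp}(\mu_{f,v})$ is exactly what has to be shown, so as written the argument is circular. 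The tools you gesture at do not fill the gap: the Bedford--Lyubich--Smillie description is purely archimedean and would still have to be converted into the polynomial-hull statement, while the equidistribution of Galois orbits (Theorem \ref{thm:equidistributionofsmallpoints}) is a global arithmetic statement about generic sequences of small points and has no bearing on this purely local assertion at a single place $v$. Likewise, the observation $\sup_{\mathrm{supp}(\mu_{f,v})}|P\circ f^n|_v=M$ is true but never enters any estimate.

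What actually closes the gap --- this is the route of \cite[Lemma 6.3]{DF17}, which the paper reproduces for skew products in Lemma \ref{lem:largestconvexhullsupport} --- is a rigidity argument rather than a maximum principle. Set $C_0=\sup_{\mathrm{supp}(\mu_{f,v})}|P|_v$, fix $\varepsilon>0$, and consider the modified potential $\widetilde{G}:=\max\bigl\{G_{f,v},\ (\deg P)^{-1}\log\bigl(|P|_v/(C_0+\varepsilon)\bigr)\bigr\}$. This again defines a continuous semipositive metric in the sense of Definition \ref{defn:semipositadelicmetricsdef}, and it coincides with $G_{f,v}$ on a neighborhood of $\mathrm{supp}(\mu_{f,v})$, so its Monge--Amp\`ere measure equals $\mu_{f,v}$ (\cite[Corollary A.2]{DF17}). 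The Calabi-type rigidity theorem of Yuan--Zhang \cite[Corollary 2.2]{YZ17} then forces $\widetilde{G}-G_{f,v}$ to be constant, and the constant is $0$, whence $(\deg P)^{-1}\log\bigl(|P|_v/(C_0+\varepsilon)\bigr)\leq G_{f,v}$ everywhere; evaluating on $K_{f,v}=\{G_{f,v}=0\}$ and letting $\varepsilon\to0$ gives $\sup_{K_{f,v}}|P|_v\leq\sup_{\mathrm{supp}(\mu_{f,v})}|P|_v$ at every place, archimedean or not. Without this rigidity input (or an equivalent domination principle valid on the Berkovich space), your outline does not constitute a proof.
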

We are now ready to state and proof the main result of this section. The following statement is more general than Theorem \ref{thm:Henontypemain} as a flexibility on degree of maps is allowed.
\begin{thm}  Let $f$ and $g$ be two polynomial automorphisms 
of the affine plane $\A^2$ defined over a number field $K$ such that at  least one of which has degree greater than one. Suppose that $f$ and $g$ are compositionally independent and a morphism $c:\A^2\rightarrow\A^2$ (defined over $K$) is not a compositional power of $f$ or $g$. Then the set of $p\in \A^2_{\overline{K}}$ such that $$f^m(p)=g^n(p)=c(p)$$ for some $m,n\in\N$ is contained in a proper Zariski closed subset of $\A^2$.\end{thm}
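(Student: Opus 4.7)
The plan is to argue by contradiction along the four-step blueprint given in the introduction. Assume that the set
$$S=\{p\in\A^2_{\overline K}\,:\,f^m(p)=g^n(p)=c(p)\text{ for some }m,n\in\N\}$$
is Zariski dense in $\A^2$. Writing $S=\bigcup_{m,n\geq 1}S_{m,n}$ with $S_{m,n}=\{p:f^m(p)=c(p)\}\cap\{p:g^n(p)=c(p)\}$, each $S_{m,n}$ is a proper closed subvariety of $\A^2$, for equality with $\A^2$ would force $f^m=c$ or $g^n=c$, contradicting the hypothesis on $c$. A standard diagonal extraction (over the countable field $\overline K$) then produces a generic sequence $\{p_i\}\subset S$ with associated exponents $m_i,n_i$; the same properness lets us arrange $m_i,n_i\to\infty$, otherwise a fixed pair $(m_0,n_0)$ would recur infinitely often and place an infinite Zariski-dense subset inside $S_{m_0,n_0}$.

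In the main case $\deg f,\deg g\geq 2$, \textbf{Step (I)} applies Proposition \ref{prop:smallpoints} to $f^{m_i}(p_i)=c(p_i)$ and to $g^{n_i}(p_i)=c(p_i)$ to obtain $\tilde h_f(p_i)\to 0$ and $\tilde h_g(p_i)\to 0$. \textbf{Step (II):} since $\tilde h_f$ and $\tilde h_g$ both arise from semipositive adelic metrics on $\mathcal O_{\P^2}(1)$, Theorem \ref{thm:equidistributionofsmallpoints} applies at every place $v\in M_K$; uniqueness of weak limits of the Galois-averaged Dirac measures on $\{p_i\}$ then forces $\mu_{f,v}=\mu_{g,v}$. \textbf{Step (III):} equality of measures gives $\mathrm{supp}(\mu_{f,v})=\mathrm{supp}(\mu_{g,v})$, which by Lemma \ref{lem:convexhullofmeasure} is promoted to $K_{f,v}=K_{g,v}$. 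The continuous Green functions $G_{f,v}$ and $G_{g,v}$ vanish on this common set and share the same asymptotics $\log^+\|\cdot\|_v+O(1)$ at infinity, so a pluri-potential-theoretic argument (archimedean) or a Berkovich maximum-modulus argument (non-archimedean) forces $G_{f,v}=G_{g,v}$; consequently $\tilde h_f=\tilde h_g$ on $\A^2_{\overline K}$, and Theorem \ref{thm:canonicalheight}(3) yields $\Per(f)=\Per(g)$.

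\textbf{Step (IV)} invokes the dynamical consequence of Lamy's Tits alternative for $\Aut[\C^2]$, in the form used by Dujardin--Favre \cite{DF17}: two H\'enon-type polynomial automorphisms whose periodic point sets coincide must be compositionally dependent, contradicting the hypothesis. The degenerate case in which one of the maps, say $g$, is affine is handled by a simpler separate argument: $\tilde h_f$ still controls the sequence, the rearrangement $p_i\in\Fix(g^{-n_i}\circ c)$ confines the $p_i$ to a countable union of proper subvarieties, and a height-comparison using $f^{m_i}(p_i)=g^{n_i}(p_i)$ together with the linearity of $g^{n_i}$ extracts directly an algebraic identity between iterates of $f$ and $g$ that violates compositional independence. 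The principal obstacle is Step (IV): the passage from coincidence of equilibrium measures to compositional dependence rests on the Friedland--Milnor classification and Lamy's deep Tits alternative for $\Aut[\C^2]$, and no elementary substitute is currently available.
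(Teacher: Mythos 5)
Your main case (both degrees $\geq 2$) follows the paper's proof step for step: smallness via Proposition \ref{prop:smallpoints}, equidistribution via Theorem \ref{thm:equidistributionofsmallpoints}, equality of measures, promotion to $K_{f,v}=K_{g,v}$ via Lemma \ref{lem:convexhullofmeasure}, then Dujardin--Favre/Lamy to contradict compositional independence. However, in Step (III) you insert an intermediate claim the paper neither makes nor needs, and which you do not justify: from $K_{f,v}=K_{g,v}$ you assert $G_{f,v}=G_{g,v}$ at every place by ``pluripotential theory'' resp.\ a ``Berkovich maximum-modulus argument''. At archimedean places this amounts to knowing that $\max\{G^+_{f},G^-_{f}\}$ is the pluricomplex Green function of the filled Julia set (a nontrivial Bedford--Smillie-type fact), and at non-archimedean places no such uniqueness statement is available to you as stated. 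It is also superfluous: since $\tilde h_f$ is a sum of the nonnegative local terms $G_{f,v}$ over places and Galois conjugates, and $K_{f,v}=\{G_{f,v}=0\}$, equality of the filled Julia sets already gives $\tilde h_f(p)=0\iff\tilde h_g(p)=0$, hence $\Per(f)=\Per(g)$ by Theorem \ref{thm:canonicalheight}(3). This ``local-to-global'' shortcut is exactly the Claim in the paper's proof; you should argue that way rather than through equality of Green functions.

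The degenerate case (say $\deg g=1$) is a genuine gap as written. Observing that the $p_i$ lie in $\Fix(g^{-n_i}\circ c)$, i.e.\ in a countable union of proper subvarieties, proves nothing: the full solution set is such a union in every case, and a countable union of proper subvarieties over $\overline K$ can perfectly well be Zariski dense --- that is precisely the phenomenon the theorem must rule out. Moreover you never actually derive the ``algebraic identity between iterates of $f$ and $g$'' you invoke, and it is unclear how linearity of $g^{n_i}$ would produce one, since the coefficients of $g^{n_i}$ (hence any height constants) grow with $n_i$. The paper closes this case arithmetically: every algebraic solution of $g^{n}(p)=c(p)$ has degree over $K$ bounded independently of $n$, while Proposition \ref{prop:smallpoints} forces $\tilde h_f(p_i)\to 0$, hence bounded Weil height by Theorem \ref{thm:canonicalheight}(1)--(2); Northcott finiteness (Kawaguchi's Theorem A) then leaves only finitely many admissible points, so no such sequence can be generic. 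You would need to supply an argument of this kind (or an equivalent) to complete the proof.
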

\begin{proof} [Proof sketch] \textbf{Case I :} Suppose that  $\deg f, \deg g>1$. In other words, $f$ and $g$ are both {H}\'{e}non type maps.\\
Suppose, on the contrary, that there is a generic sequence $\{p_i\}$ such that for every $i$, there exist $m_i$ and $n_i$ so that $f^{m_i}\neq c$ and $g^{n_i}\neq c$, and $f^{m_i}(p_i)=g^{n_i}(p_i)=c(p_i)$. As $i\rightarrow\infty$, $m_i$ and $n_i$ must both tend to $\infty$ because $f^{m_i}-c$ and $g^{n_i}-c$ have only finitely many zeros. It is clear from Proposition \ref{prop:smallpoints} that  $\{p_i\}$ is a sequence of small points with respect to both $f$ and $g$, more precisely,
	$$\lim_{i\rightarrow\infty}\tilde{h}_f(p_i)=\lim_{i\rightarrow\infty}\tilde{h}_g(p_i)=0.$$
We claim that $\mathrm{Per}(f)=\mathrm{Per}(g)$.
Applying the equidistribution theorem of small points for {H}\'{e}non maps (Theorem \ref{thm:equidistributionofsmallpoints}), we have that the sequence $\{p_i\}$ is equidistributed with respect to the probability measure $\mu_{f,v}$ and $\mu_{g,v}$. The uniqueness forces $\mu_{f,v}=\mu_{g,v}$ at all $v\in M_K$.  Using Lemma \ref{lem:convexhullofmeasure}, it follows that $K_{f,v}=K_{g,v}$. \\
	
	\noindent \textbf{Claim:} (arithmetic local-to-global property) Let $f$ be any regular polynomial automorphism defined over $K$. Then $p\in \A^2_{\overline{K}}$ is $f$-periodic  if and only if $p$ and all of its conjugates are contained in the $v$-adic filled Julia set $K_{f,v}$ for all $v\in M_K$. 
\begin{proof} [Proof of Claim] 
Let $L$ be a finite extension of $K$ and let $p$ be an element of $\A^2_L$. 
 Recall from \cite[Theorem 6.3(2)(5) and Proposition 6.5(2)] {Ka13} or Theorem \ref{thm:canonicalheight}(3) that $$\tilde{h}_f(p)=0\Longleftrightarrow p\in\mathrm{Per}(f).$$ Note also that $$\tilde{h}_f(p)=\frac{1}{\deg(p)}\sum_{\sigma : L\hookrightarrow\C_v}\sum_{v\in M_K}N_v\max\{G^+_{f,v}(p^{\sigma}),G^-_{f,v}(p^{\sigma})\}.$$ Since $G^+_{f,v}\geq0$ and $G^-_{f,v}\geq0$, it gives rise to the fact that 
$p$ is $f$-periodic if and only if $G_{f,v}(p^{\sigma})=\max\{G^+_{f,v}(p^{\sigma}),G^-_{f,v}(p^{\sigma})\}=0$ for all $v\in M_K$ and all $\sigma:L^2\hookrightarrow\C^2_v$.  By the limit definition (\ref{eq:Greenplus}) and (\ref{eq:Greenminus}) of $G^{\pm}_{f,v}$, it is equivalent to saying that $G_{f,v}(p)=0$ if and only if $\{f^{\pm n}(p)\}_{n\geq 0}$ is bounded with respect to $\|\cdot\|_v.$  Moreover, for each place $v\in M_K$, the $v$-adic filled Julia set $K_{f,v}$ captures points for which $G_{f,v}$ vanishes. This fact was verified by Kawaguchi \cite[Theorem 3.1]{Ka13} for non-archimedean $v$ and by Forn\ae ss-Sibony \cite[\S2]{Sib99} and \cite[Proposition 7.9]{FS99} for archimedean $v$. The  claim is established.
\end{proof}
	
	\noindent Take a $f$-periodic point $p$.  At any place $v$, it belongs to $K_{f,v}$ by local-to-global arithmetic property. So it  belongs to $K_{g,v}$. Recall that 
	$$\tilde{h}_g(p)=\frac{1}{\deg(p)}\sum_{v\in M_K}\sum_{q\in\mathrm{Gal}(\overline{K}/K)\cdot p}N_vG_{g,v}(q).$$ Since $K_{g,v}=\{G_{g,v}=0\}$, we then have $\tilde{h}_g(p)=0.$ This shows that $p$ is a $g$-periodic point. Hence we obtain $\mathrm{Per}(f)=\mathrm{Per}(g)$ as claimed.\\
\indent As explained in Dujardin-Favre \cite[Step 3]{DF17}, we know that $f$ and $g$ must share a common iterate and hence they are compositionally dependent. As a consequence, such small sequence $\{p_i\}$ must not be Zariski dense.\\

\noindent \textbf{Case II :} Without loss of generality, suppose that  $\deg f>1$ and $\deg g=1$. In other words, $f$ is {H}\'{e}non type map and $g$ is either affine map or an elementary automorphism of degree one.\\
\indent For any infinite  sequence of $\{p_i\}$ such that $f^{m_i}(p_i)=c(p_i)$ for all $i\in\N$, we have that $\{p_i\}$ is a sequence of small point with respect to $f$ by Proposition \ref{prop:smallpoints}. Moreover, any algebraic solution to $g^n(p)=c(p)$ is of bounded degree over $K$.  Hence Theorem \ref{thm:canonicalheight} (1) and the Northcott finiteness property \cite[Theorem A(i)]{Ka06}  implies that such a sequence $\{p_i\}$ is not generic (i.e., it is contained in a proper closed subvariety of $\A^2$).\end{proof}
\section{Arithmetic dynamics of endomorphisms on $(\P^1)^n$}
\subsection{Height functions associated to endomorphisms on $(\P^1)^n$} In this subsection, our absolute values on a number field $K$ are normalized as in \S \ref{subsec:absolutevalues}. Recall that the standard Weil height $h:\P^1_{\overline{K}}\rightarrow\R$  is given by
$$h(x)=\frac{1}{\deg(x)}\sum_{v\in M_K}\sum_{y\in\mathrm{Gal}(\overline{K}/K)\cdot x}N_v\log^+|y|_v$$ where $\deg(x)$ is again the cardinality of the orbit of $x$ under the action of the  absolute Galois group $\mathrm{Gal}(\overline{K}/K)$ of $K$. For each rational map $f(x)\in K(x)$ of degree $d\geq 2$ defined over $K$, the (global) canonical height function $\hat{h}_f:\P^1_{\overline{K}}\rightarrow\R$ attached to $f$ is defined by 
$$\hat{h}_f(x)=\lim_{n\rightarrow\infty}\frac{1}{d^n}h(f^n(x)).$$
It is often useful to decompose canonical height attached to rational maps on $\P^1$ in terms of local height function. That is, \begin{equation}\hat{h}_f(x)=\frac{1}{\deg(x)}\sum_{v\in M_K}\sum_{x'\in \mathrm{Gal}(\overline{K}/K)\cdot x}N_v\lambda_{f,v}(x')\notag\end{equation} where the local height $\lambda_{f,v}$ extends continuously to $\A^1_{\C_v}$ and it grows logarithmically, that is, $$\lambda_{f,v}(x)=\log|x|_v+O(1)$$ as $|x|_v\rightarrow\infty$. 
For each $v\in M_K$, there is a notion of distributional valued Laplacian operator $\Delta$ on the Berkovich projective line $\P^{1,\mathrm{an}}_{\C_v}$. The reader may refer to \cite{Ben19, Ber90, BR10} for more background on the  Berkovich projective space $\P^{1,\mathrm{an}}_{\C_v}$. The Laplacian operator is normalized by $$\Delta \log^+|x|_v=\rho_v-\delta_{\infty}$$ on $\P^{1,\mathrm{an}}_{\C_v}$ where $\rho_v=\mu_{S^1}$ is the Lebesgue measure on the unit circle ($v$ archimedean) and $\rho_v=\delta_{\mathrm{Gauss}}$ is the Dirac measure supported at Gauss point on $\P^{1,\mathrm{an}}_{\C_v}$ ($v$ non-archimedean). Note that $\log^+|x|_v$ extends continuously from $\P^1_{\C_v}$ to a real-valued function on $\P^{1,\mathrm{an}}_{\C_v}$. Analogously, for each $v\in M_K$, the local function  $\lambda_{f,v}$ extends naturally to a continuous and subharmonic function on $\A^{1,\mathrm{an}}_{\C_v}$ with logarithmic singularity at $\infty$. Then we have, on $\P^{1,\mathrm{an}}_{\C_v}$,\begin{equation}\Delta\lambda_{f,v}=\mu_{f,v}-\delta_{\infty}\label{eq:laplacian}\end{equation} where $\mu_{f,v}$ is the canonical probability measure associated to rational function $f$ at place each $v$ of $K$. For each place $v\in M_K$, the probability measure $\mu_{f,v} $ satisfies the invariance property $f^*\mu_{f,v}=d\mu_{f,v}.$ For non-archimedean $v\in M_K$, if $f$ have good reduction, the measure $\mu_{f,v}$ is the Dirac measure supported on Gauss point $\zeta_{0,1}$ on $\P^{1,\mathrm{an}}_{\C_v}$.\\ 

The canonical height associated to rational maps enjoys several interesting properties which we record below.
\begin{prop}\label{thm:propertiesofcanoheight} Let $f$ be a rational map of degree $\geq 2$ defined over a number field $K$.
 Then there exist constants $c_1,c_2,c_3,$ and $c_4$, depending only on $d$ such that for any $x\in \P^1_{\overline{K}}$ the following holds:
\begin{enumerate}
    \item[(1)] $|h(f(x))-(\deg f)h(x)|\leq c_1h(f)+c_2$.
    \item[(2)] $|\hat{h}_f(x)-h(x)|\leq c_3h(f)+c_4$.
    \item[(3)]  $\hat{h}_f(f(x))=(\deg f)\hat{h}_f(x)$.
    \item[(4)] $x$ is $f$-preperiodic if and only if $\hat{h}_f(x)=0.$
\end{enumerate}   
Here $h(f)$ denotes height of the polynomial $f$ (see \cite[\S1.6]{BG06}).
\end{prop}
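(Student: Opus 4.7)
The four statements are the standard package of properties for a canonical height attached to a polarized self-map of $\mathbb{P}^1$; my plan is to prove them in the order (1), (3), (2), (4), which is the most economical route.

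For (1), the idea is to use the resultant bound for morphisms of $\mathbb{P}^1$. Writing $f=[F_0:F_1]$ with $F_0,F_1\in K[X,Y]$ homogeneous of degree $d=\deg f$ and coprime, the key input is that the resultant $\mathrm{Res}(F_0,F_1)$ is nonzero, so for each place $v$ one has a two-sided estimate
\be
\log\max\{|F_0(x)|_v,|F_1(x)|_v\}=d\log\max\{|x_0|_v,|x_1|_v\}+O_v(h(f))\notag
\ee
where the implicit constant depends only on $d$, with an archimedean/non-archimedean distinction absorbed into $c_1h(f)+c_2$. Summing over $v\in M_K$ with the standard multiplicities and taking the Galois average produces the desired comparison between $h(f(x))$ and $d\, h(x)$; the clean reference is \cite[\S B.2]{HS00} or the exposition in \cite{BG06}.

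Property (3) is immediate from the limit definition, since
\be
\hat h_f(f(x))=\lim_{n\to\infty}\frac{h(f^{n+1}(x))}{d^n}=d\cdot\lim_{n\to\infty}\frac{h(f^{n+1}(x))}{d^{n+1}}=d\,\hat h_f(x).\notag
\ee
For (2) I use a telescoping argument. From (1) applied to each iterate (noting that $h(f^n)\leq C_d\, n\, h(f)+C'_d$ by another application of (1) in the product form, but more simply by keeping track of coefficient sizes of $f^n$), one gets
\be
\left|\frac{h(f^{n+1}(x))}{d^{n+1}}-\frac{h(f^n(x))}{d^n}\right|\leq \frac{c_1 h(f)+c_2}{d^{n+1}}.\notag
\ee
This is a Cauchy sequence, and summing the geometric series in $1/d$ gives the bound $|\hat h_f(x)-h(x)|\leq c_3 h(f)+c_4$ with constants depending only on $d$.

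For (4), if $x$ is preperiodic then $\{f^n(x)\}_{n\geq 0}$ is a finite set, hence of bounded Weil height; combined with (3), which gives $\hat h_f(f^n(x))=d^n\hat h_f(x)$, and the bound from (2), one forces $\hat h_f(x)=0$. Conversely, assume $\hat h_f(x)=0$; then by (3) every forward iterate $f^n(x)$ also has canonical height $0$, hence by (2) the Weil heights $h(f^n(x))$ are uniformly bounded. Since the iterates $f^n(x)$ all lie in a fixed number field (namely $K(x)$, whose degree is bounded), Northcott's theorem guarantees that only finitely many such points exist, and hence the orbit is preperiodic. The only step where care is needed is ensuring that $h(f^n)$ does not grow too fast in (2), but this is absorbed into the constants since we are estimating $\hat h_f-h$ at a single, fixed $f$.
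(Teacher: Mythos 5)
Your proposal is correct, and it is essentially the same approach as the paper's: the paper simply cites \cite[\S B.2 and \S B.4]{HS00} and \cite[\S 3.4]{Sil07}, and what you have written out (the resultant/Nullstellensatz estimate for (1), Tate's telescoping argument for (2) and the limit computation for (3), and boundedness of the orbit plus Northcott for (4)) is precisely the standard argument contained in those references. One small remark: your parenthetical about controlling $h(f^n)$ is unnecessary, since in the telescoping step you apply (1) to the fixed map $f$ at the point $f^n(x)$, so only $h(f)$ ever enters, exactly as your displayed inequality shows.
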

\begin{proof}
For our purpose, we state only for number fields. The above properties (1), (2), and (3) also hold over global field whose proof can be found in \cite[\S B.2 and \S B.4]{HS00} and \cite[\S 3.4]{Sil07}. 
\end{proof}

 For an endomorphism $F=(f_1,f_2,...,f_n)$ on $(\P^1)^n$,
we  define the height $\hat{h}_{F}$ for any point $(x_1,x_2,...,x_n)\in (\P^1)^n$ by \begin{equation}\hat{h}_{F}((x_1,...,x_n)):=\hat{h}_{f_1}(x_1)
+...+\hat{h}_{f_n}(x_n).\label{eq:heightofsplitrat}\end{equation}


\begin{prop}\label{prop:smallpointsproduvar}
Let $F=(f_1,f_2,...,f_n)$ be an endomorphism on $(\P^1)^n$ where $F(x_1,x_2,...,x_n)=(f_1(x_1),f_2(x_2),...,f_n(x_n))$ for rational functions $f_i$ defined over a number field $K$ of degree $\deg(f_i)=d\geq 2$ for all $1\leq i\leq n$. Suppose that $\{X_m:=(x_{1,m},x_{2,m},...,x_{m,n})\}_{m\geq1}$ is a generic sequence in $(\P^1_{\overline{K}})^n$ such that 
$F^m(X_m)=R(X_m)$ for all $m\geq 1$ where $R=(r_1,r_2,...,r_n)$ is any endomorphism on $(\P^1)^n$ defined over $K$. Then
$$\lim_{m\rightarrow\infty}\hat{h}_{F}(X_m)=0.$$
\end{prop}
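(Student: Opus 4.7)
The plan is to follow exactly the strategy of Proposition \ref{prop:smallpoints} from the H\'enon setting, with the only extra bookkeeping being the coordinatewise decomposition of $\hat{h}_F$. Since $F$ acts as a coordinatewise product and each $f_i$ has degree $d$, I will apply Proposition \ref{thm:propertiesofcanoheight}(3) separately in each factor to get the functorial identity
$$\hat{h}_F(F^m(X)) = \sum_{i=1}^n \hat{h}_{f_i}(f_i^m(x_i)) = d^m \sum_{i=1}^n \hat{h}_{f_i}(x_i) = d^m \hat{h}_F(X).$$
In particular, using the hypothesis $F^m(X_m) = R(X_m)$,
$$d^m \hat{h}_F(X_m) = \hat{h}_F(R(X_m)).$$

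The next step is to bound $\hat{h}_F(R(X_m))$ linearly in $\hat{h}_F(X_m)$. By Proposition \ref{thm:propertiesofcanoheight}(2), applied in each coordinate, $\hat{h}_F$ differs from the standard Weil height on the product $(\P^1)^n$ by a bounded function. Because $R$ is a morphism between the projective varieties $(\P^1)^n \to (\P^1)^n$, the standard functorial property of heights (cf.\ \cite[\S B.2]{HS00}), or equivalently the multidegree estimate $h(r_i(X)) \leq \sum_j \deg_{x_j}(r_i)\, h(x_j) + O(1)$ applied componentwise, produces a constant $D = D(R) \geq 1$ so that
$$\hat{h}_F(R(X)) \leq D\,\hat{h}_F(X) + O(1)$$
for every $X \in (\P^1_{\overline{K}})^n$, with the implicit constant independent of $X$.

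Combining the two displays yields
$$(d^m - D)\,\hat{h}_F(X_m) \leq O(1),$$
with an absolute implicit constant. Since each $\hat{h}_{f_i}$ is non-negative, so is $\hat{h}_F$, and since $d \geq 2$ the left-hand factor $d^m - D$ tends to $+\infty$. Letting $m \to \infty$ forces $\hat{h}_F(X_m) \to 0$, which is the conclusion.

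I do not foresee a serious obstacle: the argument is the verbatim analogue of the one in the H\'enon case, once the height $\hat{h}_F$ is seen to be additive in the coordinates. The only point deserving a word of care is the linear height bound for a general (not necessarily split) endomorphism $R$ of $(\P^1)^n$; one may handle this either by tracking the bidegree of each component $r_i$ directly, or by pulling back along a Segre embedding $(\P^1)^n \hookrightarrow \P^N$ and quoting the bound on a single projective space. I also note that the genericity hypothesis on $\{X_m\}$ plays no role in this proposition — it becomes essential only later, when the small-height property is fed into arithmetic equidistribution in the proof of Theorem \ref{thm:commonzerosofendomorphism}.
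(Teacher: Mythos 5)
Your argument is correct, but it is not the route the paper takes, so a brief comparison is in order. You work with the total height $\hat{h}_F=\sum_i\hat{h}_{f_i}$ throughout: the exact functional equation $\hat{h}_F(F^m(X))=d^m\hat{h}_F(X)$ plus a single linear bound $\hat{h}_F(R(X))\leq D\,\hat{h}_F(X)+O(1)$, obtained from $\hat{h}_F=h+O(1)$ and the functorial height inequality for the morphism $R$ of $(\P^1)^n$ (via multidegrees or a Segre embedding), which gives $(d^m-D)\hat{h}_F(X_m)\leq O(1)$ and hence the claim — a verbatim transplant of Proposition \ref{prop:smallpoints}. The paper instead opens up the structure of $R$: it writes $R(x_1,\dots,x_n)=(r_1(x_{\sigma(1)}),\dots,r_n(x_{\sigma(n)}))$ for a permutation $\sigma$, takes canonical heights of the coordinate equations $f_i^m(x_{i,m})=r_i(x_{\sigma(i),m})$, and eliminates the intermediate heights around each $\sigma$-cycle to conclude $\hat{h}_{f_i}(x_{i,m})\to 0$ for every $i$ separately, then sums. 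Your approach buys uniformity and economy: you never need the (implicitly invoked, nontrivial) fact that every endomorphism of $(\P^1)^n$ is a permuted split map, and constant or non-dominant components of $R$ cause no trouble; the global bound $\hat{h}_F(R(X))\leq D\,\hat{h}_F(X)+O(1)$ is all that is used. The paper's cycle-chasing buys the coordinatewise statement $\hat{h}_{f_i}(x_{i,m})\to 0$ explicitly, which is what feeds into the equidistribution step of Theorem \ref{thm:commonzerosofendomorphism} — but since each summand of $\hat{h}_F$ is non-negative, that refinement also falls out of your version, as the paper itself notes when it extracts the coordinatewise limits from $\hat{h}_F(X_j)\to 0$ in the proof of that theorem. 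Your closing remarks (care with the height bound for a non-split $R$, and the fact that genericity of $\{X_m\}$ is not used in this proposition) are both accurate.
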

\begin{proof} 
Since $R(X)$ is an arbitrary endomorphism on $(\P^1)^n$, we have $$R(x_1, \dots, x_n) = (r_1(x_{\sigma(1)}) , \dots, r_n(x_{\sigma(n)})),$$
where $\sigma$ is a permutation of the set $\{1,2, \dots, n\}$.

Notice that for any rational function $f$, $g$, where $\deg(f) > 1$ and $\deg(g) \geq 1$, and an arbitrary point $x \in \P^1_{\overline{K}}$, we have
\begin{align*}
    |\hat{h}_f(g(x))-(\deg g)\hat{h}_{f}(x)|&\leq |\hat{h}_f(g(x))-h(g(x))|+|h(g(x))-(\deg g)h(x)|\\&+(\deg g)|h(x)-\hat{h}_f(x)|\\&\leq C
\end{align*}
where the constant $C$ is independent of $x$, but depending on the data of $f$ and $g$ as a result of Theorem \ref{thm:propertiesofcanoheight}(1)--(2). Thus we have shown that  $$\hat{h}_{f}(g(x))=(\deg g)\hat{h}_{f}(x)+O(1).$$ 

Now, for any index $i \in \{1,2, \dots, n\}$, we denote the period of $i$ under the action of $\sigma$ as $k$. Then we have the assumption in the statement of the proposition implies the following:
\begin{align*}
f^m_{i}(x_{i,m}) &= r_i(x_{\sigma(i), m}),\\
    f^m_{\sigma(i)}(x_{\sigma(i),m}) &= r_{\sigma(i)}(x_{\sigma^2(i), m}),\\
    &\vdots\\
       f^m_{\sigma^{k-1}(i)}(x_{\sigma^{k-1}(i),m}) &= r_{\sigma^{k-1}(i)}(x_{i, m}).
\end{align*}
These will then imply the following relations between canonical heights of points:
\begin{align*}
  d^{m}_i \hat{h}_{f_i}(x_{i,m}) &= d_{r_i}\hat{h}_{f_i}(x_{\sigma(i),m}) + D_1\\
    d^{m}_{\sigma(i)} \hat{h}_{f_i}(x_{{\sigma(i)},m}) &= d_{r_{\sigma(i)}}\hat{h}_{f_i}(x_{\sigma^2(i),m}) + D_2\\
    &\vdots\\
      d^{m}_{\sigma^{k-1}(i)} \hat{h}_{f_i}(x_{{\sigma^{k-1}(i)},m}) &= d_{r_{\sigma^{k-1}(i)}}\hat{h}_{f_i}(x_{i,m}) + D_k,
\end{align*}
where $D_i$, $i \in \{1,2,\dots, k\}$ are some constants only depending on $F$ and $R$, $d_{i} = \deg(f_{i})$ and $d_{r_j} = \deg(r_j)$ for $j \in \{1,2, \dots, k\}$. Then, we eliminating the terms $\hat{h}_{f_i}(x_{\sigma^j(i),n})$ for $j \in \{1, \dots, k-1\}$ and obtains 
\begin{equation}
    (d^m_i - K_{1,m})\hat{h}_{f_i}(x_{i,m}) = K_0 + K_{2,m},
\end{equation}
where $K_0$, $K_{1,m}$ and $K_{2,m}$ are constants such that $K_0$ doesn't depend on $m$ and $\displaystyle\lim_{m \to \infty} K_{1,m} = \displaystyle\lim_{m \to \infty}K_{2,m} = 0$. This implies that $$ \lim_{m \to \infty} \hat{h}_{f_i}(x_{i,m}) = 0.$$


Since the argument holds for arbitrary $i \in \{1,2, \dots,n\}$, by the definition of canonical height of $F$ (\ref{eq:heightofsplitrat}) it is easy to see that 
$$\hat{h}_{F}(X_m)=\hat{h}_{f_1}(x_{1,m})+\hat{h}_{f_2}(x_{2,m})+...+\hat{h}_{f_n}(x_{n,m})
\rightarrow 0$$ as $m\rightarrow\infty$.
\end{proof}
\subsection{A free semigroup criterion for endomorphisms  on $(\P^1)^n$}
\begin{lem}\label{lem: preperiodic-stable-composition}
    Suppose $f$ and $g$ are rational functions defined over $\C$ such that $V = \Prep(f) = \Prep(g)$ and $\deg(f), \deg(g) >1$. Then for any $h \in \langle f, g \rangle$, the semigroup generated by $f$ and $g$ under composition, we have $\Prep(h) = V$.
\end{lem}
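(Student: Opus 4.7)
The plan is to convert the hypothesis $\Prep(f)=\Prep(g)$ into a pointwise identity of canonical heights $\hat h_f=\hat h_g$, and then to exploit the compatibility of canonical heights with composition. After spreading out and specializing, one may assume $f,g$ are defined over a number field $K$, and recover the complex statement by a Lefschetz/specialization argument at the end.

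Since $\deg f,\deg g\geq 2$, the common preperiodic set $V$ is infinite and Zariski dense in $\P^1$. Choose any generic sequence $\{\alpha_n\}\subset V$; by hypothesis $\hat h_f(\alpha_n)=\hat h_g(\alpha_n)=0$, so Yuan's arithmetic equidistribution theorem forces $\mu_{f,v}=\mu_{g,v}$ at every place $v\in M_K$. Reconstructing canonical heights from the local potentials of these measures (each $\lambda_{\bullet,v}$ is determined by $\mu_{\bullet,v}$ together with its prescribed logarithmic growth at $\infty$), one upgrades this to the pointwise identity $\hat h_f=\hat h_g$ on $\P^1(\overline K)$.

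Now fix $h=\phi_1\circ\phi_2\circ\cdots\circ\phi_k$ with $\phi_j\in\{f,g\}$. Iterate the functional equation $\hat h_{\phi_j}\circ\phi_j=(\deg\phi_j)\hat h_{\phi_j}$ together with the equality $\hat h_{\phi_j}=\hat h_f$ at each stage to peel off degrees one by one:
\[
\hat h_f(h(x)) = (\deg\phi_1)\,\hat h_f(\phi_2\circ\cdots\circ\phi_k(x)) = \cdots = \Bigl(\prod_{j=1}^k \deg\phi_j\Bigr)\hat h_f(x) = \deg(h)\,\hat h_f(x).
\]
Combined with $\hat h_f$ differing from the Weil height by $O(1)$ (cf. Proposition 3.1(2)) and $\deg(h)\geq 2$, uniqueness of the canonical height attached to $h$ forces $\hat h_h = \hat h_f$, and therefore
\[
\Prep(h) = \{\hat h_h=0\} = \{\hat h_f = 0\} = V.
\]

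The main obstacle is the passage from $\C$ to $\overline{\Q}$. The cleanest route is to spread $f,g$ out over a finitely generated $\overline{\Q}$-algebra and specialize to a generic $\overline{\Q}$-point, using persistence of preperiodicity under specialization to preserve $\Prep(f_{\mathrm{sp}})=\Prep(g_{\mathrm{sp}})$. An alternative approach stays over $\C$ by replacing $\hat h_f$ with the complex-analytic Green function $G_f$ of the measure of maximal entropy and running the identical telescoping to obtain $G_h=G_f$; the identification $\Prep(h)=V$ is then deduced from this together with density of repelling periodic points in the Julia set and a direct analysis of Fatou-component dynamics. All other steps are routine consequences of canonical-height formalism.
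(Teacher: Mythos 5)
Your number-field core is fine: over a number field, $\Prep(f)=\Prep(g)$ gives an infinite sequence of common small points, equidistribution yields $\mu_{f,v}=\mu_{g,v}$ at all places, hence $\hat{h}_f=\hat{h}_g$, and your telescoping plus uniqueness of the canonical height gives $\hat{h}_h=\hat{h}_f$ and so $\Prep(h)=V$. (The paper's base case is lighter: it just observes that the finite sets $V(L)$ are invariant under $f$ and $g$, hence under $h$, and invokes Baker--DeMarco; but your route works too.) The genuine gap is the step you relegate to a remark: the lemma is over $\C$, and the descent is the actual content of the paper's proof. ``Persistence of preperiodicity under specialization'' only goes the wrong way for what you need. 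After spreading out, take $p\in V$; its specializations are preperiodic for the specialized maps, and your number-field argument shows they are preperiodic for the specializations of $h$. But this does \emph{not} imply $p\in\Prep(h)$ over $\C$: by Call--Silverman specialization it only gives $\hat{h}_h(p)=0$ for the function-field canonical height, and vanishing of that height implies preperiodicity only when $h$ is non-isotrivial (this is DeMarco's theorem, \cite[Theorem 1.2]{DL16}); for isotrivial $h$ there are height-zero points that are not preperiodic. This is exactly why the paper runs an induction on the transcendence degree, invokes \cite[Theorem 4.1]{CS93} and \cite[Theorem 1.2]{DL16} in the non-isotrivial case, and devotes a separate argument (showing the relevant point is already defined over the smaller field, then applying \cite[Theorem 1.2]{BD11}) to the isotrivial case. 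None of this is present in your proposal, and without it the conclusion over $\C$ does not follow.

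Your alternative ``stay over $\C$'' sketch does not close the gap either. The equidistribution input that produces $\mu_{f,v}=\mu_{g,v}$ is arithmetic; over $\C$ you would need Moriwaki/Yuan--Zhang style heights over finitely generated fields or an unlikely-intersection theorem, which you do not invoke. Moreover, even granting an identity of complex invariant measures (equivalently of Green functions) $\mu_h=\mu_f$, deducing $\Prep(h)=\Prep(f)$ is not a matter of ``density of repelling periodic points plus Fatou-component analysis'': equality of measures gives equality of Julia sets, but preperiodic points lying in the Fatou set (e.g.\ points attracted to (super)attracting cycles) are not controlled this way, and the known rigidity results for maps sharing the measure of maximal entropy (Levin--Przytycki, \cite{Ye15}) are substantial theorems you would have to cite and verify apply. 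As written, both proposed routes from $\overline{\mathbb{Q}}$ to $\C$ are gaps rather than proofs.
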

\begin{proof}
    Let $K$ be the finitely generated field extension of $\Q$ over which $f$ and $g$ are defined. We prove the lemma by induction on the transcedental degree of $K$ over $\Q$. 
    
    The base case is that $K$ is a number field. Then, for any finite field extension $L$ of $K$, we have $V(L)$ is a finite set and $$f(V(L)) \subseteq V(L),$$
    $$ g(V(L)) \subseteq V(L).$$
    This implies that $h(V(L)) \subseteq V(L)$ and so $V(L) \subseteq \Prep(h)(L)$. Thus $\bigcup_{L \text{ s.t. } [L:K] < \infty} V(L) \subseteq \Prep(h)$, which implies that $h$ and $f$ share infinitely many preperiodic points. Then the unlikely intersection \cite[Theorem 1.2]{BD11}, implies $\Prep(h) = V$.

    Now, suppose $K  = K'(Z)$, where $K'$ is a finitely generated field of transcendental degree not greater than $m$ over $\Q$ and the statement holds true for any finitely generated field of transcendental degree not greater than $m$ over $\Q$. 
    
    We first suppose that $h$ is not isotrivial. Take an aribitrary $p(Z) \in V$, then for any specialization of $Z = z \in K'$, we have $p(z)$ is in $\Prep(f_z) = \Prep(g_z)$ and the induction hypothesis implies $p(z) \in \Prep(h_z)$. Thus $\hat{h}_{h_z}(p(z)) = 0$ for any $z \in K'$ and the specialization \cite[Theorem 4.1]{CS93} implies $\hat{h}_{h}(p(Z)) = 0$. Since $h$ is not isotrivial, we have by \cite[Theorem 1.2]{DL16} that $p(Z)$ is a preperiodic point of $h$. Thus $V \subseteq \Prep(h)$ and the unlikely intersection argument by enlarging $K$ to some finite extensions, again, implies that $\Prep(h) = V$.

    Suppose $h$ is isotrivial, that is there exists an automorphism $\sigma$ over a finite extension of $K$ such that $h$ is conjugated to $h'$ defined over a finite extension $L'$ of $K'$. Take any $p \in V$, we have $\sigma(p)$ is a preperiodic points for both $f' = \sigma \circ f  \circ \sigma^{-1}$ and $g' = \sigma \circ g \circ \sigma^{-1}$. For any specialization $Z = z \in L'$, we have $\sigma(p)(z) \in \Prep(f'_z)= \Prep(g'_z)$ and the induction hypothesis implies $\sigma(p)(z) \in \Prep(h')$. Let $\{X_1, \dots, X_r\}$ be a transcendental basis of $L'$ over some number field $L''$, where $r$ is a positive integer smaller or equal to $m$. Then for any specialization $X_i = x_i \in L''$ for each $i \in \{1,2, \dots, r\}$, we have $\sigma(p)(x_1, \dots, x_r, z) \in \Prep(h'_{x_1, \dots, x_r})(L'')$ for any $z \in L''$. Since $\Prep(h'_{x_1, \dots, x_r})(L'')$ is finite for a Zariski dense set of $(x_1, \dots, x_r)$ in $(L'')^{r}$, we have $\sigma(p)(x_1, \dots, x_r)(z)$ is a rational function only taking finitely many values when $z \in L''$, which implies that it is a constant function doesn't depend on $z$, for a Zariski dense set of $(x_1, \dots, x_r)$. Thus, $\sigma(p) \in L'$.
    
    Since $\sigma(p) \in \Prep(h')$ by the induction hypothesis, $h'$ and $f'$ share an infinite set of preperiodic points by the field enlarging argument again. The unlikely intersection \cite[Theorem 1.2]{BD11} implies that $\Prep(h') = \Prep(f')$. So, $\Prep(h) = \Prep(f) = V$. 

    Then the induction will conclude the proof of the lemma.
\end{proof}
\begin{prop} \label{prop:freesemicriterion}
    Let $F = (f_1, \dots, f_n) $ and $G = (g_1, \dots, g_n)$ be splitting endomorphism defined over $\C$ on $(\P^1)^n$ such that $d = \deg(f_i) = \deg(g_j)$ for all $i , j \in \{1,2, \dots, n\}$, where $d \geq 2$ is a positive integer. Suppose $\Prep(F) = \Prep(G)$, then the semigroup generated by $F$ and $G$ under composition is not free.
\end{prop}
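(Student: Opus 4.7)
The plan is to prove Proposition \ref{prop:freesemicriterion} by induction on $n$, using Lemma \ref{lem: preperiodic-stable-composition} (rigidity of preperiodic sets under composition in dimension one) and the one-dimensional base case \cite[Corollary 4.12]{BHPT} as the two main inputs.

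First I would use the split structure to reduce the hypothesis to coordinate-wise information. Because $F=(f_1,\dots,f_n)$ acts diagonally, one has $\Prep(F)=\prod_{i=1}^n\Prep(f_i)$ and similarly for $G$; each factor is non-empty (it contains a fixed point of a degree-$d$ map), so $\Prep(F)=\Prep(G)$ forces $\Prep(f_i)=\Prep(g_i)$ for every $i$. The base case $n=1$ is then exactly \cite[Corollary 4.12]{BHPT}.

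For the inductive step I would apply the statement at level $n-1$ to the truncations $F'=(f_1,\dots,f_{n-1})$ and $G'=(g_1,\dots,g_{n-1})$, which still satisfy $\Prep(F')=\Prep(G')$. This yields distinct words $W_1\ne W_2$ with $W_1(F',G')=W_2(F',G')$, and comparing coordinate degrees (every $f_i,g_i$ has degree $d\ge 2$) forces $|W_1|=|W_2|$. Setting $H_j=W_j(F,G)$ as endomorphisms of $(\P^1)^n$, they agree on the first $n-1$ coordinates; only the $n$-th entries $h_n^{(j)}=W_j(f_n,g_n)$, of common degree $d^{|W_1|}$, may differ. If $h_n^{(1)}=h_n^{(2)}$ then $H_1=H_2$ already exhibits a relation in $\langle F,G\rangle$. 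Otherwise, Lemma \ref{lem: preperiodic-stable-composition} applied to $(f_n,g_n)$ gives $\Prep(h_n^{(1)})=\Prep(h_n^{(2)})=\Prep(f_n)$, and the base case applied to this new pair on $\P^1$ produces distinct words $\tilde W_1\ne\tilde W_2$ of equal length with $\tilde W_1(h_n^{(1)},h_n^{(2)})=\tilde W_2(h_n^{(1)},h_n^{(2)})$.

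I would then verify that $\tilde W_1(H_1,H_2)=\tilde W_2(H_1,H_2)$ as endomorphisms of $(\P^1)^n$: on the first $n-1$ coordinates $H_1=H_2$, so both sides collapse to the same power using $|\tilde W_1|=|\tilde W_2|$, and on the $n$-th coordinate the identity is exactly what was just produced. Unwinding the outer substitution $H_j=W_j(F,G)$ yields two words $V_1$ and $V_2$ in $F,G$ of equal length representing the same endomorphism; since $\tilde W_1\ne\tilde W_2$ they differ at some letter, and the corresponding length-$|W_1|$ blocks of $V_1$ and $V_2$ are $W_1$ and $W_2$ (which differ as words), so $V_1\ne V_2$. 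This is the desired non-trivial relation in $\langle F,G\rangle$. The main obstacle is manufacturing a single relation valid on every coordinate simultaneously: the inductive hypothesis supplies agreement only on the first $n-1$ coordinates, while the $n$-th coordinate carries an a priori independent relation, and the two-level substitution reconciles them by choosing the outer word in the variables $H_1,H_2$, which automatically collapses on the coordinates where $H_1=H_2$ while being tailored to produce equality exactly on the coordinate where they may differ.
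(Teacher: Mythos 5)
Your proposal is correct and takes essentially the same route as the paper's proof: an induction over the coordinates in which Lemma~\ref{lem: preperiodic-stable-composition} shows the last coordinates of the two agreeing words still share their preperiodic set, the one-dimensional Tits alternative of Bell--Huang--Peng--Tucker then yields a relation in that coordinate, and substituting back produces a nontrivial relation in $\langle F,G\rangle$. Your write-up is merely a bit more explicit than the paper about the equal-length (degree comparison) and word-distinctness bookkeeping, which the paper leaves implicit.
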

\begin{proof}
    Let $P_m$ be the projection from $(\P^1)^n$ to the first $m$ coordinates, i.e., $(\P^1)^m$. Notice that the composition is compatible with $P_m$ in the sense that $P_m \circ h = P_m(h) \circ P_m $, where $h \in \langle F, G \rangle$ and $P_m(h)$ is the first $m$ coordinate of $h$. We prove the proposition by applying $P_m$ and induction on $m$ for each integer $m \in \{1,2, \dots, n\}$. 

    The base case is that $\langle P_1 (F), P_1  (G) \rangle$ is not free. This is true since $\Prep(F) = \Prep(G)$ implies $\Prep(f_1) = \Prep(g_1)$ and the Tits' alternative of rational functions \cite[Corollary 4.11]{BHPT} implies that $\langle f_1, g_1 \rangle$ is not free. 

    Now, we suppose that $\langle P_m(F), P_m(G) \rangle $ is not a free semigroup for some $m \in \{1,2, \dots, n-1\}$, we will show that $P_{m+1}(F)$ and $P_{m+1}(G)$ also don't generate a free semigroup. Notice that there exists two distinct sequence of compositions $h_1$, $h_2 \in \langle F, G \rangle$, such that 
    $$ P_m(h_1) = P_m(h_2).$$
    We want to show $\Prep(h_{1,m+1}) = \Prep(h_{2, m+1})$, where $h_{i, m+1}$, $i = 1, 2$, are the $m+1$-th coordinate of $h_i$. This follows immediately from the Lemma \ref{lem: preperiodic-stable-composition} since $\Prep(f_{m+1}) = \Prep(g_{m+1})$.
    Now, applying the Tits' alternative of rational functions \cite[Corollary 4.11]{BHPT} again, we have that $ \langle h_{1,m+1}, h_{2,m+1}\rangle$ is not free. Then since $P_m(h_1) = P_m(h_2)$, there exists two distinct sequence of compositions $h'_1$, $h'_2 \in \langle h_1, h_2 \rangle \subseteq \langle F, G \rangle$, such that 
    $$ P_{m+1}(h'_1) = P_{m+1}(h'_2).$$
    Therefore, by the induction, we have $\langle F, G \rangle$ is not a free semigroup.
\end{proof}
\subsection{Common solutions of iterated endomorphisms on $(\P^1)^n$}
\begin{thm}\label{thm:arithmeticequidistribution} (Arithmetic Equidistribution on $\P^1$) \cite{BR06,CL06, FRL06}
Let $K$ be a number field and $f(x)\in K(x)$ be a rational function of degree $d\geq 2$ defined over $K$. Suppose that $\{x_n\}_{n\geq1}$ is an infinite sequence of distinct points in $\P^1_{\overline{K}}$ satisfying $\hat{h}_f(x_n)\rightarrow0$. Then the discrete probability measure supported equally on the Galois conjugates of $x_n$
$$[x_n]_v=\frac{1}{[K(x_n):K]}\sum_{\sigma:K(x_n)\hookrightarrow\C_v}\delta_{\sigma(x_n)}$$ converges weakly to the equilibrium measure $\mu_{f,v}$ in  $\P^{1,\mathrm{an}}_{\C_v}$ at each place $v\in M_K$ in the sense that 
$$\int\varphi d[x_n]_v\rightarrow\int\varphi d\mu_{f,v}$$ for every bounded continuous function $\varphi : \P^{1,\mathrm{an}}_{\C_v}\rightarrow\R$. Here $\delta_x$ denotes the Dirac measure on $\P^{1,\mathrm{an}}_{\C_v}$ supported at the point $x$. 
 \end{thm}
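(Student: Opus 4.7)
The plan is to prove this via potential theory on the Berkovich projective line, following the original approach of Baker--Rumely, Favre--Rivera-Letelier, and Chambert-Loir. The broad strategy is to reduce weak convergence of measures to convergence of energy (Arakelov--Green) pairings, and then to estimate these pairings in terms of the global canonical height.

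First I would write $\hat{h}_f$ as an adelic sum of local canonical heights $\lambda_{f,v}$, which are continuous on $\A^{1,\mathrm{an}}_{\C_v}$ with logarithmic singularity at $\infty$ and characterized by the Laplacian identity $\Delta \lambda_{f,v} = \mu_{f,v} - \delta_{\infty}$ recorded in (\ref{eq:laplacian}). Second, I would introduce for each place an Arakelov--Green function $g_{\mu_{f,v}}$ (the $\Delta$-antiderivative of $\mu_{f,v}$, suitably normalized) and use it to build a non-negative $v$-adic discrepancy functional measuring how far $[x_n]_v$ sits from $\mu_{f,v}$ in the energy sense. Third, I would invoke Zhang's successive minima inequality for the adelic metric $\{G_{f,v}\}$ on $\mathcal{O}_{\P^1}(1)$ (yielding $\hat{h}_f(\P^1)=0$) together with the product formula: the hypothesis $\hat{h}_f(x_n) \to 0$ then translates into the vanishing of a sum of non-negative local energies, so each term tends individually to $0$. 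Finally, a Frostman-type argument on the Berkovich line upgrades $v$-adic energy convergence to the desired weak convergence $[x_n]_v \to \mu_{f,v}$ at every place $v \in M_K$.

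The principal obstacle, and the reason the theorem is cited rather than reproved in this paper, is that all of the analytic ingredients must be developed intrinsically on $\P^{1,\mathrm{an}}_{\C_v}$ at non-archimedean places: one must construct the Laplacian as a distribution on piecewise-affine functions, verify positivity and lower-semicontinuity of the energy functional, handle the fact that $\mu_{f,v}$ may be supported away from the classical points (e.g., on the Gauss point in the good-reduction case), and prove uniqueness of $\mu_{f,v}$ as an equilibrium measure on the Julia set. This framework is set up systematically in the three cited references, each of which completes the proof by carrying out the energy/Frostman argument outlined above.
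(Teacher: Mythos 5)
The paper does not prove this theorem at all: it is quoted verbatim from the cited sources (Baker--Rumely, Chambert-Loir, Favre--Rivera-Letelier), so there is no internal proof to compare against, and your outline is essentially the potential-theoretic argument those references carry out. Two caveats on your sketch. First, the per-place ``non-negative local energies'' are not literally non-negative as you state: the energy pairing of $[x_n]_v-\mu_{f,v}$ against itself diverges on the diagonal, so one works with the off-diagonal (regularized) discrepancy, which is non-negative only up to an error of size $O\!\left(\log D_n/D_n\right)$ with $D_n=[K(x_n):K]$; the argument therefore needs $D_n\to\infty$, which follows from Northcott because the $x_n$ are distinct and of bounded (indeed vanishing) height. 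Only after summing over places, applying the product formula, and absorbing this diagonal error does $\hat{h}_f(x_n)\to 0$ force each local discrepancy to vanish, and then the Frostman/lower-semicontinuity step gives weak convergence. Second, invoking Zhang's successive minima to get $\hat{h}_f(\P^1)=0$ is harmless but unnecessary on $\P^1$ (the essential minimum vanishes already because preperiodic points have height zero and are Zariski dense, or simply because the given small sequence exists); the paper does use that device, but only in its higher-dimensional applications of Yuan's theorem (Theorems \ref{thm:equidistributionofsmallpoints} and \ref{thm:equidistributionpolyskewprods}), not for the $\P^1$ statement here.
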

 Next, we prove our main result which asserts the Zariski-non-density of common solutions of two iterated split endomorphisms on $(\P^1)^n$ defined over a number field.  One key ingredient of proof  relies on an arithmetic equidistribution Theorem \ref{thm:arithmeticequidistribution} of small points.
\begin{proof}[Proof of Theorem \ref{thm:commonzerosofendomorphism}] Let $K$ be the field generated by all coefficients of $F, G,$ and $R$. Then $K$ must be a number field.
Suppose that there exists a generic sequence $\{X_j:=(x_{1,j},x_{2,j},...,x_{n,j})\}_{j\geq 1}\subset (\P^1_{\overline{K}})^n$ of the solutions to  $$F^{k_j}(X_j)=G^{l_j}(X_j)=R(X_j).$$ It should be clear that $k_j$ and $l_j$ increase without bound as $j\rightarrow\infty$. It follows immediately from Proposition \ref{prop:smallpointsproduvar}  that the sequence $\{X_j\}_{j\geq1}$ is of small height with respect to $F$ and $G$. That is, $\hat{h}_{F}(X_j)\rightarrow0$ and $\hat{h}_{G}(X_j)\rightarrow0$ as $j\rightarrow\infty$. By the non-negativity of the canonical height $\hat{h}_{f_i}$ (resp. $\hat{h}_{g_i}$) and the definition of $\hat{h}_{F}$ (resp. $\hat{h}_G$) (\ref{eq:heightofsplitrat}), we have that $$\lim_{j\rightarrow\infty}\hat{h}_{f_i}(x_{i,j})=\lim_{j\rightarrow\infty}\hat{h}_{g_i}(x_{i,j})=0$$ for all $1\leq i\leq n.$
For each $1\leq i\leq n$, the sequence $\{x_{i,j}\}_{j\geq 1}$  is small with respect to both $f_i$ and $g_i$. For the rest of the proof, the same argument applied to all indices $1\leq i\leq n$ simultaneously.  Applying the arithemtic equidistribution, Theorem \ref{thm:arithmeticequidistribution}, 
 we obtain that 
$\mu_{f_i,v}=\mu_{g_i,v}$ on $\P_{\C_v}^{1,\mathrm{an}}$ at all places $v\in M_K$. Then the definition of the Laplacian (\ref{eq:laplacian}) implies the equality $$\lambda_{f_i,v}(x)=\lambda_{g_i,v}(x)+ C_v$$ holds true for all places $v\in M_K$. 
  Summing over all places $v$ of $K$, we obtain 
$$\hat{h}_{f_i}(x)=\hat{h}_{g_i}(x)+C.$$ It is easy to deduce that $C\equiv 0$ by taking $x\in \mathrm{Prep}(f_i)$ and $x\in\mathrm{Prep}(g_i)$.  Hence we have shown that $\hat{h}_{f_i}=\hat{h}_{g_i}$ for all $1\leq i\leq n$. As the canonical height of rational functions  detects their preperiodic points  in the sense of  Proposition \ref{thm:propertiesofcanoheight}(4), it gives
$$\mathrm{Prep}(f_i)=\mathrm{Prep}(g_i)$$ for all $1\leq i\leq n$. Hence $\mathrm{Prep}(F)=\mathrm{Prep}(G)$. As asserted in Proposition \ref{prop:freesemicriterion}, we have that the semigroup generated by $F$ and $G$ are not free. In other words, $F$ and $G$ are compositionally dependent. This is a contradiction to the assumption. Therefore, such Zariski dense sequence of solutions to the equation \ref{eq:mainthmeq} does not exist. The proof is completed.

\end{proof}
\section{Tits' Alternative of Regular Polynomial Skew Products}\label{sec:TitAlternativepolyskew}
\begin{defn}
   Let's denote the Chebyshev polynomial of degree $d \in \N$ as $T_d$.  We call a polynomial defined over $\C$ a special polynomial, or say a polynomial is a special, if it is linearly conjugated over $\C$ to either a power map or $\pm T_d$ for some $d \in \N$.
\end{defn}
\begin{defn}
    Let $f$ and $g$ be two polynomials in $k[x]$, where $k$ is an algebraically closed field. We say $f$ is linearly related to $g$ over $k$ if there exists non-constant linear maps $l_1, l_2 \in k[x]$ such that
    $$ f = l_1 \circ g \circ l_2.$$
\end{defn}
\begin{lem}\label{lem: special-to-non-special}
    For a polynomial $f(x,y) = a_dy^d + \sum^{d-1}_{i = 0}a_i(x) y^i$ defined over $\C$, where $a_d \in \C^*$ and $a_i(x) \in \C[x]$. Suppose there are infinitely many $x_0 \in \C$ such that $f(x_0,y)$ is linearly related to $T_d$ (respectively $y^d$), then $g(x,y)$ is linearly related to $T_d$ (respectively $y^d$) over $\overline{\C(x)}$.
\end{lem}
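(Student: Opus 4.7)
The plan is to translate the lemma into a statement about the image of a morphism of algebraic varieties and exploit the irreducibility of the affine line. Working in the $T_d$ case first, I would introduce the quasi-affine variety
$$V \;=\; \bigl\{(x,\alpha,\beta,\gamma,\delta) \in \A^1 \times \A^4 : \alpha\beta \neq 0 \text{ and } f(x,y) = \alpha\, T_d(\beta y + \gamma) + \delta\bigr\},$$
where the displayed equality is read as an equality of polynomials in $y$. Equating coefficients of each power of $y$ converts it into $d+1$ polynomial equations in $(x,\alpha,\beta,\gamma,\delta)$, so $V$ is cut out in $\A^1 \times \{\alpha\beta \ne 0\}$ by polynomial equations.

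Let $\pi \colon V \to \A^1$ denote the first projection. By Chevalley's theorem, $\pi(V)$ is constructible in $\A^1$. The hypothesis of the lemma says precisely that $\pi(V)$ contains infinitely many $\C$-points, and since every constructible subset of the irreducible one-dimensional variety $\A^1$ is either finite or cofinite, $\pi(V)$ must be Zariski dense. Because $V$ has only finitely many irreducible components, at least one component $V' \subseteq V$ projects dominantly onto $\A^1$; its generic fiber $V'_\eta = V' \times_{\A^1} \Spec(\C(x))$ is then non-empty and of finite type over $\C(x)$.

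Any $\overline{\C(x)}$-valued point of $V'_\eta$ now supplies elements $\alpha,\beta,\gamma,\delta \in \overline{\C(x)}$ with $\alpha\beta \neq 0$ satisfying $f(x,y) = \alpha\, T_d(\beta y + \gamma) + \delta$ in $\overline{\C(x)}[y]$, so that taking $l_1(z) = \alpha z + \delta$ and $l_2(y) = \beta y + \gamma$ exhibits the desired linear relation $f = l_1 \circ T_d \circ l_2$ over $\overline{\C(x)}$. The case "linearly related to $y^d$" is handled by exactly the same argument with $T_d(\beta y + \gamma)$ replaced by $(\beta y + \gamma)^d$; the defining system of $V$ remains polynomial in $(x,\alpha,\beta,\gamma,\delta)$, so the rest of the proof transfers verbatim.

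I do not expect a substantive obstacle: the argument is an elementary spreading-out, and the only step needing care is the passage from "infinitely many specializations admit a linear relation" to "$\pi$ is dominant", which is immediate from the constructibility of $\pi(V)$ together with the fact that an infinite constructible subset of $\A^1$ is cofinite. The assumption that the leading coefficient $a_d$ is a nonzero constant (rather than a nonconstant polynomial in $x$) ensures that $f(x_0,y)$ has degree exactly $d$ for every $x_0$, so no component of $V$ can degenerate out of the $\{\alpha\beta \neq 0\}$ locus.
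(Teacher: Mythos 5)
Your proof is correct, but it takes a genuinely different route from the paper's. You parametrize the sought linear relation directly: the incidence variety $V\subseteq\A^1\times\{\alpha\beta\neq0\}$ cut out by comparing $y$-coefficients in $f(x,y)=\alpha T_d(\beta y+\gamma)+\delta$, Chevalley's theorem plus the fact that an infinite constructible subset of $\A^1$ is cofinite to get that some irreducible component of $V$ dominates the $x$-line, and then an $\overline{\C(x)}$-point of the non-empty generic fiber, which is literally the desired identity in $\overline{\C(x)}[y]$; the $y^d$ case is verbatim. The paper instead argues by contraposition through Ritt's characterization (a degree-$d$ polynomial is linearly related to $T_d$ or $y^d$ exactly when a linear conjugate commutes with a polynomial of degree $2$ or $3$): if $f$ were not of the desired form over $\overline{\C(x)}$, the coefficients $F_i$ of the relevant commutation identity would generate the unit ideal in a localized polynomial ring over $\overline{\C(x)}$, and writing $\sum_i g_iF_i=1$, clearing denominators and specializing at the infinitely many good $x_0$ forces a nonzero $r\in\overline{\C(x)}$ to vanish infinitely often, a contradiction. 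Both are spreading-out arguments of equivalent strength here; yours buys directness, avoiding the commuting-polynomial input and the specialization of elements of $\overline{\C(x)}$ at points $x_0$ (which strictly speaking requires choosing places above $x=x_0$ and avoiding poles, a point the paper treats informally), while the paper's explicit unit-ideal template has the advantage that it is reused almost word for word later (see Lemma \ref{lem: uniform-conjugation}). One cosmetic remark: your closing observation about the constant leading coefficient $a_d$ is not actually needed for your argument, since the condition $\alpha\beta\neq0$ is imposed in the definition of $V$, and the degree-$d$ coefficient equation forces it anyway once $a_d\neq0$.
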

\begin{proof}
Suppose $f(x,y)$ is not linearly related to $T_d(y)$ or $y^d$ in $\overline{\C(x)}[y]$, then there exists a $i\in \{2,3\}$ such that for any non-constant linear maps $l_1(y)$ and $l_2(y) \in \overline{\C(x)}[y]$, $$l_2 \circ f(x,y) \circ l^{-1}_1(y)$$ doesn't commute with any polynomials in $\overline{\C(x)}[y]$ of degree $i$ (see \cite{RJ23} and \cite{RJ20}). Without loss of generality, we assume that $i = 2$. Let $R = \overline{\C(x)}[ a, a^{-1},b,c, \alpha^{\pm 1}_1, \alpha^{\pm 1}_2, \beta_1,\beta_2]$ be a ring extension of $\overline{\C(x)}$ by adjoining transcedental elements $\alpha_1$, $\alpha_2$, $\beta_1$, $\beta_2$, $a$, $b$ and $c$. Let $g(y) = ay^2 + by + c \in R[y]$ and $l_i(y) = \alpha_i y + \beta_i$ for $i\in\{1,2\}$. Let 
$$ F(x, y) = l_2(f(x, l_1^{-1}(g(y)))) - g(l_2(f(x,l^{-1}_1(y)))),$$
and $F_i \in R$, $i \in \{0,1,\dots, d+ 2\}$, is the coefficients of $y^{i}$ terms in $F(y)$. The above implies that 
$$ I = (F_0, \dots, F_{d+2})$$
is a unit ideal in $R$. Thus, there exists $g_i \in R$, $i \in \{0,1,2 \dots, d+2\}$, such that
\begin{equation}\label{eq: unit-equation}
     \sum^{d+2}_{i = 0} g_i F_i = 1.
\end{equation}
Now, we clear the denominators of $g_i$'s and $F_i$'s as functions on $x$ by multiplying a $r(x) \in \overline{\C(x)}^*$ on both sides and denote $g'_i = g_i r$ and $F'_i = F_i r$ for each $i$'s. 

Now, suppose there is an infinite set $S \subseteq \C$ such that for any $x_0 \in S$ we have $f(x_0, y)$ is conjugated to either $T_d(y)$ or $y^d$ in $\C[y]$. Then there is some linear polynomial $l$ in $\C[y]$ such that $T_2$ or $y^2$ commutes with $l \circ f(x_0,y) \circ l^{-1} $. Thus, $\{F'_i(x_0) : i = 0,1,2, \dots, d+2\} $ doesn't generate an unit ideal in $R$. Therefore, we have that for any $x_0 \in S$, $$\sum^{d+2}_{i = 0} g'_i(x_0) F'_i(x_0) = r(x_0) = 0.$$ This implies that $r(x) \equiv 0$ and gives a contradiction.
\end{proof}
\begin{defn}
    For a polynomial $f \in \C[x]$, the set of non-constant linear polynomial $\mu \in \C[x]$ such that there exists a non-constant linear polynomial $\nu \in \C[x]$ satisfying
    $$\nu \circ f = f \circ \mu$$
    forms a group under compositions. We denote it as $\mathcal{G}(f)$.
\end{defn}
\begin{lem}\label{lem: Ritt-decompos-common-iterates}
    Let $f(x)$ and $g(x) \in \C[x]$ be polynomials of degree $d \geq 2$ that are not linearly related to power maps such that $f (x) =  g \circ \lambda(x) $, and $f(x) = \mu \circ g(x)$ for some non-constant linear polynomials $\mu(x)$ and $\lambda(x)$. Then there exists a non-constant linear polynomial $\phi(x)$ such that 
    $$ \phi \circ f \circ \phi^{-1} = \psi \circ \epsilon x^s h(x^t)$$
    $$ \phi \circ  g \circ \phi^{-1} = \psi \circ x^s h(x^t)$$
    for some non-constant linear polynomial $\psi$, polynomial $h$ which is not a power map, positive integer $t$, $t$-th root of unity $\epsilon$ and non-negative integer $s$. 
\end{lem}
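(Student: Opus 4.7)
The plan is to combine the two hypotheses into the single semi-conjugacy $g\circ\lambda = \mu\circ g$ and then exploit the hypothesis that $g$ is not linearly related to a power map in order to pin down $\lambda$, $\mu$, and $g$ up to a common linear conjugation.

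First I would show $\lambda$ has finite order. Write $\lambda(x) = ax + b$, $\mu(y) = a'y + b'$, and $g(x) = \sum_{i=0}^{d}c_i x^i$ with $c_d\neq 0$. Matching leading coefficients in $g\circ\lambda = \mu\circ g$ gives $a' = a^d$. If $a=1$ and $b\neq 0$, then $\mu$ is a translation and comparing the $x^{d-1}$ coefficient of $g(x+b) = g(x) + b'$ yields $dc_d b = 0$, a contradiction. If $a\neq 1$, conjugate so that $\lambda(x) = ax$; the relation $g(ax) = a^d g(x) + b'$ forces $a^{d-i}=1$ for every $i\geq 1$ with $c_i\neq 0$. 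If $a$ were not a root of unity this would leave only $c_0$ and $c_d$ nonzero and $g$ would be linearly related to $x^d$, contrary to hypothesis; hence $a$ is a root of unity and $\lambda$ has finite order. (If $\lambda = \id$ then $\mu = \id$ and the conclusion is immediate with $t=1$, $\epsilon=1$, $s=0$, $\phi=\id$, $\psi(y)=y+g(0)$, $h = g - g(0)$.)

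Assuming $\lambda\neq\id$, $\lambda$ has a unique fixed point $x_0\in\C$; set $\phi(x) = x - x_0$ so that $\lambda' := \phi\circ\lambda\circ\phi^{-1}(x) = \zeta x$ for a primitive $t$-th root of unity $\zeta$ with $t\geq 2$. Writing $g' := \phi\circ g\circ\phi^{-1} = \sum c_i' x^i$ and $\mu' := \phi\circ\mu\circ\phi^{-1}$, the relation becomes $g'(\zeta x) = \mu'(g'(x))$. Letting $s\geq 1$ be the least index with $c_s'\neq 0$, one reads off that the linear part of $\mu'$ equals $\zeta^s$, and that $\zeta^i = \zeta^s$ (equivalently $t\mid(i-s)$) for every $i\geq 1$ with $c_i'\neq 0$. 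Therefore
\[ g'(x) = c_0' + x^s h(x^t), \qquad h(y) := \sum_{k\geq 0} c_{s+kt}'\, y^k. \]
If $h$ were of the form $cy^m$, then $g'(x) = c_0' + cx^{s+mt}$ would be linearly related to $x^{s+mt}$, contradicting the assumption on $g$; thus $h$ is not a power map.

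Finally, set $\psi(y) := y + c_0'$ and $\epsilon := \zeta^s$, which is a $t$-th root of unity. Then $\phi\circ g\circ\phi^{-1} = g' = \psi\circ(x^s h(x^t))$. Matching constant terms in $g'(\zeta x) = \mu'(g'(x))$ forces $\mu'(y) = \zeta^s y + c_0'(1-\zeta^s)$, so
\[ \mu'(g'(x)) = \zeta^s\bigl(c_0' + x^s h(x^t)\bigr) + c_0'(1-\zeta^s) = c_0' + \epsilon\, x^s h(x^t) = \psi\bigl(\epsilon x^s h(x^t)\bigr), \]
giving $\phi\circ f\circ\phi^{-1} = \psi\circ(\epsilon x^s h(x^t))$ as required. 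The main obstacle is the first step: converting the hypothesis ``not linearly related to a power map'' into finiteness of the order of $\lambda$. Once the normalization $\lambda(x) = \zeta x$ is achieved, the rest of the argument is coefficient bookkeeping.
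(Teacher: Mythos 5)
Your proof is correct, and it reaches the paper's normal form by a more self-contained route. The paper gets the key normalization by citing the proof of Lemma 4.2 in \cite{PF20}: from $\mu^{-1}\circ g\circ\lambda=g$ and the non-power-map hypothesis it concludes that $\mu$ and $\lambda$ are conjugate, by possibly different linear maps $\phi_1,\phi_2$, to multiplications by roots of unity of order at most $d$, and then a coefficient comparison shows $\phi_1\circ g\circ\phi_2^{-1}=x^sh(x^t)$, so the lemma follows with $\phi=\phi_2$, $\psi=\phi_2\circ\phi_1^{-1}$, $\epsilon=\epsilon_1$. You instead prove the needed rigidity of $\lambda$ by hand: the leading coefficient gives the multiplier of $\mu$ as $a^d$, a nontrivial translation $\lambda$ is ruled out by the $x^{d-1}$ coefficient, and if $a$ were not a root of unity the relation $\tilde g(ax)=a^d\tilde g(x)+b'$ would force $\tilde g$ to be a shifted power map, contradicting that $g$ is not linearly related to a power map. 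You then never normalize $\mu$ separately; after moving the fixed point of $\lambda$ to the origin you read off the linear and constant parts of $\mu'$ directly from the coefficients, so your $\psi$ comes out as the explicit translation $y\mapsto y+c_0'$ rather than the paper's $\phi_2\circ\phi_1^{-1}$. What each approach buys: yours is elementary and avoids the external citation, handles the degenerate case $\lambda=\id$ explicitly, and identifies $\psi$ concretely; the paper's is shorter on the page and additionally records that the roots of unity involved have order at most $d$, a bound your argument also yields implicitly (since $t\mid d-s$ with $1\le s\le d$) but which is not needed for the statement. The only stylistic caution is in your first step, where after conjugating $\lambda$ to $x\mapsto ax$ you keep writing $g$ and $b'$ for the conjugated data; since being linearly related to a power map is invariant under linear conjugation, this does not affect the argument.
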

\begin{proof}
    From the condition, we see that 
    $$ \mu^{-1} \circ  g \circ \lambda = g.$$
    By \cite[proof of Lemma 4.2]{PF20} and the fact that $g$ is not linearly related to power maps, there exists a linear polynomial $\phi_1$ and $\phi_2$ such that $ \mu$ and $\lambda$ are conjugated by $\phi_1$ and $\phi_2$ respectively to the maps $x \to \epsilon_i x$, $i \in \{1,2\}$, for some roots of unity $\epsilon_1$, $\epsilon_2$ respectively. Moreover, the order of the $\epsilon_1$ and $\epsilon_2$ are not greater than $d$. Then comparing the coefficients on the both side of equation
    $$ \phi_1 \circ g \circ \phi_2^{-1} \circ \epsilon_2 x = \epsilon_1 \phi_1 \circ g \circ \phi_2^{-1},$$
    we get that
    $$ \phi_1 \circ g \circ \phi_2^{-1} = x^sh(x^t)$$
    for some polynomial $h$ which is not a power map, non-negative integer $s$ and positive integer $t$ such that $\epsilon_i$'s are $t$-th roots of unity. Thus we have
    $$ \phi_2 \circ g \circ \phi^{-1}_2 = \phi_2 \circ \phi_1^{-1} \circ x^sh(x^t) ,$$
    $$ \phi_2 \circ f \circ \phi^{-1}_2 =\phi_2 \circ g \circ \phi^{-1}_2 \circ \epsilon_2 =  \phi_2 \circ \phi_1^{-1} \circ \epsilon_1 x^sh(x^t).$$
    Then the lemma follows immediately.
\end{proof}

\begin{lem}\label{lem: sharing-common-iterates-decompos}
    Let $f(x)$ and $g(x)$ be two non-special polynomials defined over $\C$ of equal degree $d$ and sharing a common Julia set $J$. Then there exists a linear map in $ \phi(x) \in \C[x]$ such that 
    $$ \phi \circ f \circ \phi^{-1} =\epsilon_1R(x),$$
    $$ \phi \circ g \circ \phi^{-1} = \epsilon_2 R(x),$$
    where $R(x) = x^sh(x^{t})$, for some polynomial $h$ and positive integers $t$, $s$ and    $\epsilon_1$ and $\epsilon_2$ are roots of unity of order $t$.
\end{lem}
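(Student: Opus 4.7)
My plan is to show that after a single affine conjugation, $f$ and $g$ differ only by a root-of-unity scalar in front of a common polynomial $R(x) = x^s h(x^t)$. The strategy is to first produce a linear relation between $f$ and $g$ via a classical result on shared Julia sets, translate so that the natural center of rotation is at $0$, and then apply the same classical result once more to obtain the crucial rotational equivariance of $g$.

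By the classical theorem on polynomials with a common Julia set (Baker-Eremenko, Beardon, Schmidt-Steinmetz; see also \cite{PF20}), non-special polynomials $f$ and $g$ of equal degree $d$ with $J_f = J_g = J$ satisfy $f = \mu \circ g$ for some linear polynomial $\mu$ with $\mu(J) = J$. Because $\mu$ preserves the bounded set $J$, and since the linear symmetry group of the Julia set of a non-special polynomial is a finite cyclic group, $\mu$ is a rotation of finite order $t$ around a point $p \in \C$. Let $\phi_0(x) = x - p$ and set $\tilde{f} := \phi_0 \circ f \circ \phi_0^{-1}$, $\tilde{g} := \phi_0 \circ g \circ \phi_0^{-1}$. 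Then $\phi_0 \circ \mu \circ \phi_0^{-1} = M_\epsilon$, where $M_\epsilon(x) := \epsilon x$ with $\epsilon$ a primitive $t$-th root of unity, and the relation becomes $\tilde{f}(x) = \epsilon\, \tilde{g}(x)$. The Julia set $\phi_0(J)$ is now invariant under $M_\epsilon$.

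Now apply the classical theorem a second time, to $\tilde{g}$ and to its conjugate $M_\epsilon^{-1} \circ \tilde{g} \circ M_\epsilon$. The latter is a non-special polynomial of degree $d$ whose Julia set is $M_\epsilon^{-1}(\phi_0(J)) = \phi_0(J)$, identical to that of $\tilde{g}$. Hence there exists a linear polynomial $\nu$ preserving $\phi_0(J)$ with $M_\epsilon^{-1} \circ \tilde{g} \circ M_\epsilon = \nu \circ \tilde{g}$. Since the finite cyclic symmetry group of $\phi_0(J)$ already contains $M_\epsilon$, and $M_\epsilon$ fixes $0$, the common rotation center of this group must be $0$, so every element has the form $M_\alpha$ for some root of unity $\alpha$; write $\nu = M_\alpha$. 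The resulting identity $\tilde{g}(\epsilon x) = \epsilon \alpha\, \tilde{g}(x)$, together with a comparison of leading coefficients ($a \epsilon^d$ versus $a \epsilon \alpha$, where $a$ is the leading coefficient of $\tilde{g}$), forces $\alpha = \epsilon^{d-1}$, hence
$$\tilde{g}(\epsilon x) = \epsilon^d\, \tilde{g}(x).$$

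Expanding $\tilde{g}(x) = \sum_i a_i x^i$, this equivariance forces $a_i = 0$ whenever $i \not\equiv d \pmod t$. Setting $s := d \bmod t$ and collecting terms gives $\tilde{g}(x) = x^s h(x^t)$ for some polynomial $h$. Taking $\phi := \phi_0$, $R(x) := x^s h(x^t)$, $\epsilon_1 := \epsilon$ and $\epsilon_2 := 1$, we conclude $\phi \circ f \circ \phi^{-1} = \epsilon_1 R(x)$ and $\phi \circ g \circ \phi^{-1} = \epsilon_2 R(x)$, with $\epsilon_1$ and $\epsilon_2$ both $t$-th roots of unity. The main obstacle is invoking the precise equal-degree formulation of the classical shared-Julia-set theorem ($f = \mu \circ g$, not merely $f^k = \mu \circ g^\ell$): this must be carefully justified, e.g.\ through B\"ottcher coordinates at infinity, where both $f$ and $g$ conjugate to maps of the form $z \mapsto c\, z^d$, forcing the ratio to be a linear rotation on the dynamical plane.
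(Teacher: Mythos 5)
Your argument is correct and reaches the stated conclusion (indeed with the extra normalization $\epsilon_2=1$), but it runs along a slightly different track than the paper's proof. The paper invokes the Schmidt--Steinmetz structure theorem \cite{SS95} in full: both $f$ and $g$ are written as $\sigma_i\circ p^m$ with $p$ the minimal-degree polynomial having Julia set $J$ and $\sigma_i\in\Sigma(J)$, and then a single conjugation puts $p$ into the normal form $x^{s'}h'(x^t)$ and turns the $\sigma_i$ into multiplications by $t$-th roots of unity, so that $R=(p$ in normal form$)^m$. You instead use only the equal-degree consequence of that circle of results --- two same-degree non-special polynomials with the same Julia set differ by a symmetry of $J$, which is precisely Beardon's ``Polynomials with identical Julia sets'' \cite{BA92} (or an immediate corollary of \cite{SS95}), so your closing worry about justifying $f=\mu\circ g$ via B\"ottcher coordinates is unnecessary --- and you apply it twice: once to get $\tilde f=\epsilon\,\tilde g$ after centering the rotation $\mu$ at $0$, and once to $\tilde g$ versus $M_\epsilon^{-1}\circ\tilde g\circ M_\epsilon$ to extract the equivariance $\tilde g(\epsilon x)=\epsilon^d\tilde g(x)$ by comparing leading coefficients, whence $\tilde g=x^sh(x^t)$ directly. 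What your route buys is self-containedness: you never need the minimal polynomial $p$, nor the (glossed-over in the paper) check that an iterate of a map in normal form is again of the form $x^sh(x^t)$; what the paper's route buys is brevity, since \cite{SS95} hands over the decomposition and one only has to normalize $p$ once. Two small points to tidy: (i) the sentence ``the common rotation center must be $0$'' presupposes $\epsilon\neq 1$; in the degenerate case $\mu=\mathrm{id}$ (so $f=g$, $t=1$) the symmetry argument is vacuous but the conclusion is trivial, so just treat it separately; (ii) your $s=d\bmod t$ may be $0$, so the lemma's ``positive integer $s$'' is not literally achieved --- but the paper's own proof has the same feature (elsewhere the paper writes ``non-negative $s$''), so this is an issue with the statement, not with your argument.
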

\begin{proof}
Since $J = J(g)  = J(f)$ and $f$, $g$ are non-special, we have, by \cite{SS95}, that $$f = \sigma_1 \circ p^m,$$
$$ g= \sigma_2 \circ p^m$$
for some linear automorphisms $\sigma_i$, positive integer $m$, $i\in \{1,2\}$ such that $\sigma_i(J) = J$ and polynomial $p(x)$ such that $J(p) = J$. Moreover, we can find a linear map $\phi$, such that $\phi \circ p \circ \phi^{-1} = R'(x)$, where $R'(x) = x^{s'}h'(x^{t})$ for some positive integer $s'$ and $t$ and then $\sigma_i$'s are conjugated by $\phi$ to multiplications by $\epsilon_i$, where $\epsilon_i$ is a $t$-th root of unity for each $i \in \{1,2\}$. Thus, we see 
$f$ and $g$ is of the desired form.

\end{proof}
\begin{lem}\label{lem: uniform-conjugation}
    Let $f(x,y)$ and $g(x,y)$ be polynomials in $\C[x][y]$ having equal degree and also the leading coefficients of them are constants. Suppose that there exists a non-negative integer $s$, a positive integer $t$ and a $t$-th root of unity $\epsilon$ such that for infinitely many $x_0 \in \C$, there is non-constant linear polynomials $\phi_{x_0}$, $
    \psi_{x_0}$ and a polynomial $h_{x_0}$ satisfying
    $$ \phi_{x_0} \circ g(x_0,y) \circ \phi^{-1}_{x_0} = \psi_{x_0} \circ \epsilon y^{s}h_{x_0}(y^{t})$$
    $$ \phi_{x_0} \circ f(x_0,y) \circ \phi^{-1}_{x_0 } = \psi_{x_0} \circ y^{s}h_{x_0}(y^{t}).$$ 
    Then there exists non-constant linear polynomials $\phi(y), \psi(y) \in \overline{\C(x)}[y]$ and polynomial $h(y) \in \overline{\C(x)}[y]$ such that
    $$ \phi \circ g(x,y) \circ \phi^{-1} = \psi \circ \epsilon y^sh(y^t)$$
    $$ \phi \circ f(x,y) \circ \phi^{-1} = \psi \circ y^s h(y^t).$$
\end{lem}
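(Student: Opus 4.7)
The plan is to interpret the desired triple $(\phi,\psi,h)$ as a point of the generic fiber of an algebraic family parameterized by $\A^1_x$, and to deduce its existence from the existence of points in infinitely many special fibers via Chevalley's constructibility theorem. More precisely, I would parameterize the unknowns as $\phi(y)=Ay+B$, $\psi(y)=Cy+D$, and $h(y)=\sum_{j=0}^{m}c_jy^j$, where $m=(d-s)/t$ is the value forced by matching $y$-degrees on the two sides of the hypothesized identities. Writing $f(x,y)=\sum_{i=0}^d a_i(x)y^i$ and $g(x,y)=\sum_{i=0}^d b_i(x)y^i$ with $a_d,b_d\in\C^{\ast}$, clearing the denominator $A^{d-1}$ in $\phi^{-1}$ turns the two identities
$$\phi\circ f(x,y)\circ\phi^{-1}=\psi\circ y^sh(y^t),\qquad \phi\circ g(x,y)\circ\phi^{-1}=\psi\circ\epsilon y^sh(y^t)$$
into polynomial identities in $y$ whose coefficients are polynomials in $x,A,B,C,D,c_0,\ldots,c_m$. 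Equating coefficients of $y^0,\ldots,y^d$ in each identity yields a finite system of polynomial equations which cuts out an affine scheme $V\subset\A^{m+5}\times\A^1_x$.

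By hypothesis, for each $x_0$ in an infinite set $S\subset\C$ the special fiber $V_{x_0}$ is non-empty. Thus the image of the projection $\pi:V\to\A^1_x$ is a constructible subset of $\A^1_{\C}$ containing $S$ and hence infinite. Since every infinite constructible subset of the affine line over $\C$ is cofinite, $\pi(V)$ contains the generic point of $\A^1_x$, and so some irreducible component of $V$ dominates $\A^1_x$. Its generic fiber is a non-empty variety over $\C(x)$; any closed point of this generic fiber provides values $A,B,C,D,c_j\in\overline{\C(x)}$, and thus linear polynomials $\phi,\psi\in\overline{\C(x)}[y]$ together with a polynomial $h\in\overline{\C(x)}[y]$ satisfying both identities over $\overline{\C(x)}$.

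The only subtlety I expect is verifying that the non-triviality conditions $A\ne 0$ and $C\ne 0$, required for $\phi$ and $\psi$ to be genuinely linear, survive the passage to the generic point. These are in fact forced by the equations themselves: extracting the coefficient of $y^d$ in the first identity shows $A=0$ would require $a_d=0$, while $C=0$ would force the right-hand side to have $y$-degree strictly less than $d$. Both contradict the standing assumption $a_d\in\C^{\ast}$, so $V$ is already contained in the open locus where $\phi$ and $\psi$ are truly linear, and no open-subscheme adjustment is needed before invoking Chevalley. A matching leading-coefficient computation also pins the degree of $h_{x_0}$ to the single value $m=(d-s)/t$ uniformly in $x_0$, which is why the parameter space can be taken finite-dimensional at the outset.
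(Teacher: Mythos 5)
Your proof is correct, and it reaches the conclusion by a different mechanism than the paper. You argue ``primally'': after clearing the denominator $A^{d-1}$, the two identities become a system of polynomial equations in $(A,B,C,D,c_0,\dots,c_m,x)$ cutting out a scheme $V\subset\A^{m+5}\times\A^1_x$; infinitely many non-empty special fibers plus Chevalley's theorem force the constructible image $\pi(V)\subset\A^1_x$ to contain the generic point, and a closed point of the non-empty generic fiber yields $\phi,\psi,h$ over $\overline{\C(x)}$ (using that a closed point of a finite-type scheme over $\C(x)$ has residue field algebraic over $\C(x)$). The paper instead argues ``dually'' by contradiction: if no solution existed over $\overline{\C(x)}$, the coefficient polynomials would generate the unit ideal in a suitable localized polynomial ring over $\overline{\C(x)}$ (weak Nullstellensatz, since $\overline{\C(x)}$ is algebraically closed), and specializing the resulting certificate $\sum g_iF_i=1$ at the infinitely many good values $x_0$ — where a solution does exist — forces the denominator-clearing factor $r(x)$ to vanish infinitely often, a contradiction. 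Both proofs implement the same transfer principle (solvability at infinitely many specializations implies solvability at the generic point), but your version buys two things: it avoids the paper's somewhat delicate step of specializing elements of $\overline{\C(x)}$ at $x_0$ (poles, choice of $r$), and your observation that the coefficient equation at $y^d$, namely $a_d=A^{d-1}Cc_m$, automatically excludes the degenerate loci $A=0$, $C=0$, $c_m=0$ replaces the paper's device of adjoining $\alpha^{\pm1},\delta^{\pm1},a_{(d-s)/t}^{\pm1}$ to the ring; note only that this last point uses $d\ge 2$, which is the case in the application, and that degree matching does pin $\deg h_{x_0}=(d-s)/t$ exactly as you say because $f(x_0,y)$ has constant nonzero leading coefficient.
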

\begin{proof}
    Let's suppose, for the purpose of contradictions, that there doesn't exists such $\psi(y)$, $\phi(y)$ and $h(y)$ satisfying
    \begin{equation}\label{eq: uneq-1}
        \phi \circ g(x,y) \circ \phi^{-1} - \psi \circ \epsilon y^sh(y^t) = 0,
    \end{equation}
    \begin{equation}\label{eq: uneq-2}
        \phi \circ f(x,y) \circ \phi^{-1} - \psi \circ y^s h(y^t) = 0.
    \end{equation}
    Let's write $$\phi(y) = \alpha y + \beta$$ $$ \psi(y) = \delta y + \gamma$$ and $$h_1(y) = \sum^{(d-s)/t}_{i = 0}a_i y^i.$$ Let $R = \overline{\C(x)}[\alpha^{\pm 1}, \delta^{\pm 1}, a^{\pm 1}_{(d-s)/t}, \beta, \gamma, a_0, \dots, a_{d-1}]$. 
    
   Then the assumption implies that the coefficients of the left hand side of Equation (\ref{eq: uneq-1}) and (\ref{eq: uneq-2}) as elements in $R$ generates an unit ideal in $R$. We write $\{f_1, \dots, f_m\} \subseteq R$ as the coefficients from above equations that is viewed as equations of polynomials in $y$. Then this is saying that there exists $g_1, \dots, g_m \in R$ such that 
    $$ \sum^m_{i = 1}f_i g_i = 1. $$
    Now, we clear the denominators of the left hand side as a rational function on $x$ by multiplying a $r(x) \in \overline{\C(x)}^*$, which doesn't have a pole, on both side. Let $f'_i = f_i r$ and $g_i' = g_i r$, $i \in \{1,2, \dots, m\}$. Then we have 
    $$ \sum^m_{i = 1} f'_i g'_i = r.$$
    Suppose there are infinitely many $x_0$ such that the assumption in the statement of the lemma hold. Then the above implies that $r(x_0) = 0$ for infinitely many $x_0$ as $f'_i(x_0)$'s can not form an unit ideal due to the existance of $\phi_{x_0}$, $\psi_{x_0}$ and $h_{x_0}$. Thus $r \equiv 0$. This gives a contradiction.
    
    
   Thus, there exists $\phi$, $\psi$ and $h$ that makes the equations $(\ref{eq: uneq-1})$ and $(\ref{eq: uneq-2})$ hold. 
\end{proof}
For the convenience of readers, we summarize a version of Ritts' decomposition that we will be using in the discussion.
\begin{thm}[Theorem 2.3 of \cite{PF17} or see \cite{EH41}] \label{thm: Ritts-decom}
    Let $A,C, D, B$ be polynomials of degree degrees at least $2$ such that \begin{equation}
        A \circ C = D \circ B
    \end{equation} and 
    \begin{equation}
        \deg(A) = \deg(D), \deg(C) = \deg(B).
    \end{equation}
    
Then there exist a linear polynomials $\mu$ such that 
$$ A = D \circ \mu,$$
$$ C = \mu^{-1} \circ B.$$
\end{thm}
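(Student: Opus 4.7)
The plan is to show that $B$ is a linear precomposition of $C$, after which the identity $A = D \circ \mu$ follows by substitution. Write $m = \deg A = \deg D$, $n = \deg B = \deg C$, $A(y) = \sum_{i=0}^m a_i y^i$, $C(x) = \sum_{i=0}^n c_i x^i$, and analogously for $D$ and $B$. Matching leading terms in the degree-$mn$ identity $A(C(x)) = D(B(x))$ gives $a_m c_n^m = d_m b_n^m$, so if we set $\alpha := b_n / c_n$ then $\alpha^m = a_m / d_m$. The theorem reduces to showing that $R(x) := B(x) - \alpha C(x)$ is a constant polynomial, since one may then take $\mu(y) = \alpha y + R$.

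The key step is a coefficient comparison supported by a degree-count observation. For $1 \leq j \leq n - 1$, let $u_j$ and $v_j$ denote the coefficients of $x^{mn - j}$ in $C(x)^m$ and $B(x)^m$ respectively. Equating coefficients of $x^{mn - j}$ on both sides of $A(C(x)) = D(B(x))$ yields
\[ a_m \, u_j = d_m \, v_j, \]
because every term $a_{m-k} C(x)^{m-k}$ with $k \geq 1$ has degree at most $(m - 1) n = mn - n \leq mn - j - 1$, and similarly on the right. A multinomial expansion of $C(x)^m$ shows that
\[ u_j = m \, c_n^{m-1} c_{n-j} + Q_j(c_n, c_{n-1}, \ldots, c_{n-j+1}), \]
where $Q_j$ depends only on $c_n$ and the coefficients $c_{n-i}$ with $i < j$; the same formula with $c$ replaced by $b$ holds for $v_j$. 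Dividing by $c_n^m$ and $b_n^m$ and using $a_m c_n^m = d_m b_n^m$, the equation reduces to $P_j(c_{n-1}/c_n, \ldots, c_{n-j}/c_n) = P_j(b_{n-1}/b_n, \ldots, b_{n-j}/b_n)$ for a universal polynomial of the shape $P_j = m \, \xi_j + (\text{polynomial in } \xi_1, \ldots, \xi_{j-1})$. Since $m \neq 0$ in characteristic zero, an induction on $j$ forces $c_{n-j}/c_n = b_{n-j}/b_n$ for every $1 \leq j \leq n - 1$; equivalently $b_{n-j} = \alpha c_{n-j}$.

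Consequently every positive-degree coefficient of $R(x)$ vanishes and $R \equiv \beta$ is a constant, so $\mu(y) := \alpha y + \beta$ is a linear polynomial with $\mu \circ C = B$, i.e.\ $C = \mu^{-1} \circ B$. Substituting into the hypothesis, $A \circ C = D \circ B = (D \circ \mu) \circ C$; since $C$ is non-constant its image is cofinite in $\mathbb{C}$, so we can cancel to obtain $A = D \circ \mu$, which is the desired conclusion. The main obstacle is the second paragraph: one must verify the degree-separation that for $1 \leq j \leq n - 1$, no lower-order monomials of $A$ or $D$ can contribute to the coefficient of $x^{mn - j}$. This uses the hypothesis $n \geq 2$ essentially, and it is exactly what converts a potentially coupled system of coefficient equations into the triangular recursion on which the induction rests.
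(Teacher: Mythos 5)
Your argument is correct. Note that the paper does not prove Theorem \ref{thm: Ritts-decom} at all: it is quoted verbatim from Pakovich (Theorem 2.3 of \cite{PF17}) and Engstrom \cite{EH41}, so there is no internal proof to compare against. Your coefficient-comparison proof is essentially the classical Engstrom-style argument and is sound: the degree separation $(m-1)n \leq mn-j-1$ for $1 \leq j \leq n-1$ correctly isolates the top block $a_m C^m$ and $d_m B^m$, the multinomial expansion does make $c_{n-j}$ appear linearly with coefficient $m c_n^{m-1}$ alongside terms involving only $c_{n-1},\dots,c_{n-j+1}$, and the resulting triangular recursion (using $a_m c_n^m = d_m b_n^m \neq 0$ and $m \neq 0$ in characteristic zero) forces $b_{n-j} = \alpha c_{n-j}$ for all $1 \leq j \leq n-1$, whence $B = \mu \circ C$ with $\mu(y) = \alpha y + \beta$ and then $A = D \circ \mu$ by cancelling the non-constant $C$. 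What this buys, compared with the paper's bare citation, is a short self-contained proof valid over any field of characteristic zero (in particular over $\C$ and over $\overline{\C(x)}$, both of which the paper needs), with the only essential hypotheses being $\deg C = \deg B \geq 2$ and $\deg A = \deg D \geq 1$.
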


\begin{lem}\label{lem: decide-non-speical-and-special-cases}
    Let $g(x,y)$ be a polynomial in $\C[x][y]$ with constant leading coefficient and $f(x)$ be a polynomial such that $\deg_y(g) = \deg(f)  = d> 1$. Let $g_{i,x}(y) = g(f^i(x),y)$. Suppose there is an infinite set $S \subseteq \Per(f)$ such that for any $x_0 \in S$, 
    $$ g_{n-1,x_0} \circ \dots \circ g_{0,x_0}$$
    is linearly conjugated to a power map or $\pm T_{d^n}$, where $n$ is the period of $x_0$. 
    
    Then there exists non-constant linear polynomial $u_x(y), v_x(y) \in \overline{\C(x)}[y] $ such that $g(x,y) = u_x(y) \circ S_d \circ v_x(y)$, where $S_d$ is either a power map or $\pm T_d$. Moreover, $v_{f(x)} \circ u_x = c  y$ for some constant $c \in \C^*$ if $S_d = y^d$ or otherwise $v_{f(x)} \circ u_x \in \{ \pm y\}$.
\end{lem}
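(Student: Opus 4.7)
The plan is to pass from the pointwise hypothesis on $S$ to a global decomposition of $g(x,y)$ over $\overline{\C(x)}$ via Ritt's theorem and Lemma \ref{lem: special-to-non-special}, and then extract the moreover clause by controlling the inserted linear polynomials in the resulting alternating composition. By pigeonhole I may shrink $S$ to an infinite subset on which the type of the composition is uniform: either $P_{x_0} := g_{n-1, x_0} \circ \cdots \circ g_{0, x_0}$ is linearly conjugate to $y^{d^n}$ for every $x_0 \in S$, or it is linearly conjugate to $\pm T_{d^n}$ for every $x_0 \in S$. Let $S_d \in \{y^d, \pm T_d\}$ denote the corresponding degree-$d$ special polynomial.

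The first task is to show that every individual factor $g_{i, x_0}$ is linearly related to $S_d$. Fix $x_0 \in S$ of period $n$ and compare the two decompositions $P_{x_0} = (\mathrm{lin}) \circ S_d^n \circ (\mathrm{lin})$ and $P_{x_0} = g_{n-1, x_0} \circ (g_{n-2, x_0} \circ \cdots \circ g_{0, x_0})$. The outer degrees ($d$) and inner degrees ($d^{n-1}$) match, so Theorem \ref{thm: Ritts-decom} yields a linear polynomial $\mu$ with $g_{n-1, x_0} = (\mathrm{lin}) \circ S_d \circ \mu$ and realizes the inner composition as linearly conjugate to $S_d^{n-1}$. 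Descending induction shows every $g_{i, x_0}$ is linearly related to $S_d$; in particular $g(x_0, y)$ is linearly related to $S_d$ for infinitely many $x_0$. Lemma \ref{lem: special-to-non-special} then furnishes linear polynomials $u_x, v_x \in \overline{\C(x)}[y]$ with $g(x,y) = u_x \circ S_d \circ v_x$.

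For the moreover clause I substitute this decomposition into $P_{x_0}$ and collect, yielding
$$P_{x_0} = u_{f^{n-1}(x_0)} \circ S_d \circ \phi_{n-2}(x_0) \circ S_d \circ \cdots \circ \phi_0(x_0) \circ S_d \circ v_{x_0},$$
where $\phi_i(x) := v_{f^{i+1}(x)} \circ u_{f^i(x)}$ and $\phi_i(x) = \phi_0(f^i(x))$. I claim each $\phi_i(x_0)$ lies in $\mathcal{G}(S_d)$. In the power case this follows from a critical-point count: each inner $S_d = y^d$ contributes its critical point $0$, which pulls back via the preceding composition; for these preimages to coincide as required by conjugacy of $P_{x_0}$ to the single-critical-point polynomial $y^{d^n}$, each $\phi_i(x_0)$ must have zero constant term, i.e., lie in $\mathcal{G}(y^d) = \{c y : c \in \C^*\}$. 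In the Chebyshev case the same conclusion follows from a postcritical analysis: the critical values of $T_d$ are $\{\pm 1\}$, and to reproduce the $\{\pm 1\}$-invariant postcritical orbit of $T_d^n$ each $\phi_i(x_0)$ must preserve $\{-1, +1\}$, forcing $\phi_i(x_0) \in \{\pm y\} = \mathcal{G}(\pm T_d)$. Writing $\phi_0(x)(y) = \beta(x) y + \gamma(x)$ with $\beta, \gamma \in \overline{\C(x)}$, in the power case $\gamma(x_0) = 0$ on an infinite set forces $\gamma \equiv 0$, giving $v_{f(x)} \circ u_x = c\, y$; in the Chebyshev case $\phi_0(x_0) \in \{\pm y\}$ on an infinite set forces $\phi_0 \equiv \pm y$ by algebraicity of $\phi_0$ in $x$.

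The hardest step is the rigidity claim that each inserted $\phi_i(x_0)$ must lie in the symmetry group $\mathcal{G}(S_d)$. This is a structural constraint on alternating decompositions of $S_d^n$ and requires either a careful Ritt-type uniqueness argument or a direct critical-orbit analysis; the Chebyshev case is more delicate than the power case because $T_d$ has $d-1$ critical points and a genuinely combinatorial postcritical structure, so the clean single-critical-point argument used for $y^d$ must be replaced with a postcritical-invariance argument tracking the orbit $\{\pm 1\}$.
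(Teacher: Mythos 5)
Your first half — pigeonhole to fix the type, peeling the composition with Theorem \ref{thm: Ritts-decom}, then Lemma \ref{lem: special-to-non-special} to obtain $g(x,y)=u_x\circ S_d\circ v_x$ over $\overline{\C(x)}$ — is essentially the paper's argument. For the moreover clause you take a different route: the paper applies Theorem \ref{thm: Ritts-decom} twice to $u_{f^{n-1}(x_0)}\circ S_d\circ\cdots\circ S_d\circ v_{x_0}=l^{-1}\circ S_{d^n}\circ l$ and uses the closure property of $\mathcal{G}(S_d)$ to get $v_{f(x_0)}\circ u_{x_0}\in\mathcal{G}(S_d)$ uniformly in both cases, whereas you argue through critical orbits. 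Your power-map case is sound (conjugacy to $y^{d^n}$ forces a unique, totally ramified finite critical point, hence $\phi_i(0)=0$ level by level). But the Chebyshev step is a genuine gap as written: the constraint that the composition has only two critical values only forces the image of $\{\pm1\}$ under each partial composition up to the top to land in $u(\{\pm1\})$; at the adjacent level this gives $\phi_{n-2}(\{\pm1\})\subseteq T_d^{-1}(\{\pm1\})$, a set of $d+1$ points, not $\{\pm1\}$. To force $\phi_i(\{\pm1\})\subseteq\{\pm1\}$ you must also use that all critical points of $\pm T_{d^n}$ are simple, so the pushed-forward critical values must avoid $\mathrm{Crit}(T_d)$ at every later level, and then exploit $T_d^{-1}(\{\pm1\})\setminus\mathrm{Crit}(T_d)=\{\pm1\}$ in a top-down induction; none of this appears in your sketch (and for $d=2$, where $T_2$ has a single finite critical value and is itself conjugate to $y^2$, the dichotomy has to be defaulted to the power case). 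The paper's double application of Ritt avoids this combinatorics entirely.

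There is a second gap: in the power case you conclude $v_{f(x)}\circ u_x=cy$ with $c\in\C^*$ constant, but your argument only kills the constant term, giving $\beta(x)\,y$ with $\beta\in\overline{\C(x)}^*$. Since the decomposition $u_x\circ y^d\circ v_x$ can be twisted by $x$-dependent scalings, constancy of the multiplier is not automatic and is in fact a statement about a suitable choice of $u_x,v_x$. The paper obtains it by first normalizing $u_x,v_x$ to have constant leading coefficients, which is possible precisely because of the hypothesis — unused in your proposal — that $g(x,y)$ has constant leading coefficient as a polynomial in $y$; with that choice the multiplier at every $x_0\in S$ is the same constant, and the identity propagates to all $x$ by algebraicity (similarly, the Chebyshev case needs a pigeonhole between $+y$ and $-y$ before globalizing). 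Both gaps are fillable, but as written the proposal does not establish the lemma.
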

\begin{rmk}
    As maps of the form $g(f^i(x),y)$ will appear a lot in the discussion, we will continue using the notation $g_{i,x}(y) = g(f^i(x),y)$ in the rest of this section without further explanation.
\end{rmk}
\begin{proof}
    For any $x_0\in \Per(f)$, suppose $g_{n-1,x_0} \circ \dots \circ g_{0,x_0}$ is linearly conjugated to a special polynomial. Then by Ritt's decomposition, Theorem \ref{thm: Ritts-decom}, $g(x_0,y)$ is linearly related to a special polynomial. Then, since there are infinitely many such $x_0 \in \Per(f)$, by Lemma \ref{lem: special-to-non-special}, we have that there exists non-constant linear polynomial $u_x(y)$ and $v_x(y) \in \overline{\C(x)}[y]$ such that $g(x,y) = u_x \circ S_d \circ v_x$. 
    
    Moreover, since 
    $$ g_{n-1,x_0} \circ \dots \circ g_{0,x_0}$$
    is linearly conjugated to special polynomials, we can apply Theorem \ref{thm: Ritts-decom} to 
    \begin{align}
        & u_{f^{n-1}(x_0)} \circ S_d \circ v_{f^{n-1}(x_0)} \circ \dots \circ S_d \circ v_{f(x_0)} \circ u_{x_0} \circ S_d \circ v_{x_0} \nonumber \\
        & = l^{-1} \circ S_{d^n}\circ l,
    \end{align}
    for some non-constant linear polynomial $l$, and we get 
    \begin{equation}
        v_{f(x_0)} \circ u_{x_0} \circ S_d \circ v_{x_0} = \mu \circ S_d \circ l
    \end{equation}
    \begin{equation}\label{eq: ritts-4-9-1}
         u_{f^{n-1}(x_0)} \circ S_d \circ v_{f^{n-1}(x_0)} \circ \dots \circ S_d  = l^{-1} \circ S_{d^{n-1}} \circ \mu^{-1}
    \end{equation}
    for some non-constant linear polynomial $\mu \in \C[y]$. 
    Thus, $$\mu^{-1} \circ v_{f(x_0)} \circ u_{x_0} \in \mathcal{G}(S_d),$$
    since $\mathcal{G}(S_d)$ is a group formed by scaling multiplies \cite[Lemma 4.1 and Lemma 4.2]{PF20} and therefore for any non-constant linear map $l_1 \in \mathcal{G}(S_d)$ and $l_2 \in \C[x]$ such that 
    $$ l_2 \circ S_d = S_d \circ l_1,$$
    we also have $l_2 \in \mathcal{G}(S_d)$.
    Now, apply Theorem \ref{thm: Ritts-decom} again to Equation (\ref{eq: ritts-4-9-1}), we would have 
    \begin{equation}
        \mu_1 \circ S_d  = S_d \circ \mu^{-1}
    \end{equation}
    for some non-constant linear polynomial $\mu_1 \in \C[y]$. Thus, $\mu \in \mathcal{G}(S_d)$ and we also have 
    $$ v_{f(x_0)} \circ u_{x_0} \in \mathcal{G}(S_d).$$

     By \cite[Proof of Lemma 4.2]{PF20}, we know that if $S_d = \pm T_d$, then $$v_{f(x_0)} \circ u_{x_0} \in \mathcal{G}(\pm T_d) = \{ \pm y\}.$$
      Since this holds for infinitely many $x_0 \in \C$, we have $v_{f(x)} \circ u_x \in \{ \pm y\}$. 
    
    Now if $S_d = y^d$, then $v_{f(x_0)} \circ u_{x_0} = c_{x_0} y$ for some $c_{x_0} \in \C^*$ \cite[Lemma 4.1]{PF20}. But, notice that if $S_d = y^d$, since $g(x,y)$ has constant leading coefficients as a polynomial in $y$ by the assumption, we can always take $v_x(y)$ and $u_x(y)$ such that they have constant leading coefficients. Then in this case, 
    $$ v_{f(x_0)} \circ u_{x_0} = c y$$
    for a constant $c$ for infinitely many $x_0 \in \Per(f)$, which implies 
    $$v_{f(x)} \circ u_x = cy.$$

\end{proof}

\begin{prop}\label{prop: key-prop-alternative}
     Let $F(x,y) = (f(x), g_1(x,y))$ and $G(x,y) = (f(x), g_2(x,y))$ be polynomial skew products defined over $\C$ such that 
    $\deg(f_1) = \deg(f_2) = \deg_y(g_1) = \deg_y(g_2)$ and $g_1(x,y)$ and $g_2(x,y)$ have constant leading coefficients as a polynomial in $\C[x][y]$. 
    Suppose that for any $x_0 \in \Per(f)$ we have 
    \begin{equation}
         \Prep(g_{1,n-1,x_0} \circ \dots \circ g_{1,0,x_0}) = \Prep(g_{2,n-1,x_0} \circ \dots \circ g_{2,0,x_0})
    \end{equation}
    where $n$ is the period of $x_0$.
    Then the semigroup generated by $F(x,y)$ and $G(x,y)$ under composition is not free.
\end{prop}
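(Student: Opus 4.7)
My plan is to show that the hypothesis forces $g_1$ and $g_2$ to share a common Ritt-type decomposition over $\overline{\C(x)}$, from which a nontrivial word relation in $\langle F, G \rangle$ can be extracted. Because $F$ and $G$ share the base map $f$, every word of length $k$ in $\{F, G\}$ acts as $(x, y) \mapsto (f^k(x), W_x(y))$, so a semigroup identity reduces to an identity of parametrized fiber compositions in $y$.

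For the first two steps, fix $x_0 \in \Per(f)$ of period $n$ and set $G_{i,x_0} := g_{i,n-1,x_0} \circ \cdots \circ g_{i,0,x_0}$. The hypothesis $\Prep(G_{1,x_0}) = \Prep(G_{2,x_0})$ combined with Baker--DeMarco \cite[Theorem 1.2]{BD11} forces $J(G_{1,x_0}) = J(G_{2,x_0})$. By \cite{SS95}, either (A) infinitely many $x_0 \in \Per(f)$ have $G_{1,x_0}$ linearly conjugate to a power map or $\pm T_{d^n}$, or (B) for all but finitely many $x_0$ both $G_{i, x_0}$ are non-special. In Case~A, Lemma~\ref{lem: decide-non-speical-and-special-cases} applied to each $g_i$ yields $g_i(x,y) = u_{i,x} \circ S_d \circ v_{i,x}$ over $\overline{\C(x)}$ with the compatibility $v_{i, f(x)} \circ u_{i,x} \in \{c_i y, \pm y\}$. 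In Case~B, Lemma~\ref{lem: sharing-common-iterates-decompos} gives, for each such $x_0$, a common decomposition $\phi_{x_0} G_{i,x_0} \phi_{x_0}^{-1} = \epsilon_{i,x_0} R_{x_0}$ with $R_{x_0}(y) = y^{s_{x_0}} h_{x_0}(y^{t_{x_0}})$ non-special and $\epsilon_{i,x_0}$ roots of unity of order dividing $t_{x_0}$. Running Ritt's Theorem~\ref{thm: Ritts-decom} together with Lemma~\ref{lem: Ritt-decompos-common-iterates} on this joint decomposition of the $n$-fold composition descends the data to the individual factors $g_i(f^j(x_0), \cdot)$ in the form required by the hypothesis of Lemma~\ref{lem: uniform-conjugation}, which promotes the pointwise data to a uniform decomposition
\begin{equation*}
\phi \circ g_1 \circ \phi^{-1} = \psi \circ y^s h(y^t), \qquad \phi \circ g_2 \circ \phi^{-1} = \psi \circ \epsilon\, y^s h(y^t),
\end{equation*}
in $\overline{\C(x)}[y]$, with $\epsilon$ a $t$-th root of unity.

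In either case, $g_1$ and $g_2$ differ only by a multiplicative twist in a single slot; iterating a word of length $k$ in $\{F, G\}$ then produces a fiber map whose twist relative to the all-$g_1$ word is a monomial in $\epsilon$ (respectively in the constants $c_i$) whose exponent is a $\Z$-linear functional of the word's letter sequence, weighted by the powers $1, s, s^2, \ldots$. In Case~B, because $\epsilon$ has finite order $t$, pigeonhole on words of length exceeding $\log_2 t$ produces two distinct words $w_1 \neq w_2$ of equal length with matching twists, hence $w_1(F, G) = w_2(F, G)$, contradicting freeness. Case~A proceeds analogously, exploiting the extra rigidity in the compatibility $v_{i, f(x)} \circ u_{i,x} \in \{c_i y, \pm y\}$ imposed by the shared-Julia-set condition. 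The main obstacle is the descent in Case~B: one must transfer the Ritt decomposition from the $n$-fold composition $G_{i, x_0}$ back to each individual factor $g_i(f^j(x_0), \cdot)$, verify compatibility across the whole $f$-orbit of $x_0$, and then check the hypothesis of Lemma~\ref{lem: uniform-conjugation}; the uniform bound $t_{x_0} \mid d^n$ is ultimately what makes the pigeonhole step in the final stage effective independently of $x_0$.
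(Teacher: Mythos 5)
Your outline follows the paper's own strategy almost step for step (the same dichotomy between the special and non-special composite fibers, the same Lemmas \ref{lem: Ritt-decompos-common-iterates}--\ref{lem: decide-non-speical-and-special-cases} and Theorem \ref{thm: Ritts-decom}, and the same endgame of pigeonholing a root-of-unity twist to manufacture a word relation), but as written it has genuine gaps precisely at the steps that carry the weight of the argument. First, the ``descent'' in your Case B is not a routine application of Lemma \ref{lem: Ritt-decompos-common-iterates}: that lemma requires the two polynomials to \emph{not} be linearly related to power maps, and the non-specialness of the composites $G_{i,x_0}$ does not control whether the individual fiber maps $g_i(x_0,y)$ are power-related. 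The paper must split Case B further and, when the fiber maps are power-related, run a separate argument (using that $\mathcal{G}(y^d)$ consists of scalings and that the junction maps $\phi^{-1}_{f^{i+1}(x_0)}\circ\psi^{-1}_{f^i(x_0)}$ are scalings) to show $G_{1,x_0}$ is then conjugate to $y^{d^n}$, which throws that situation back into Case A; your sketch never addresses this branch. Second, before Lemma \ref{lem: Ritt-decompos-common-iterates} can even be invoked one needs the one-sided relations $\mu_{x_0}\circ g_{1,0,x_0}=g_{2,0,x_0}$ and $g_{1,0,x_0}\circ\nu_{x_0}=g_{2,0,x_0}$, which come from $G_{1,x_0}=\sigma\circ G_{2,x_0}$ (Schmidt--Steinmetz), Theorem \ref{thm: Ritts-decom}, right cancellation, and an extra use of Lemma \ref{lem: sharing-common-iterates-decompos} to absorb $\sigma_1$; and after Lemma \ref{lem: uniform-conjugation} one still has to prove that the junctions $\phi(f^{j+1}(x_0),y)\circ\phi^{-1}(f^j(x_0),y)\circ\psi(f^j(x_0),y)$ are multiplications by constants (this uses $\epsilon\neq1$ and again Ritt plus cancellation). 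Without that, your ``twist is a $\Z$-linear functional weighted by $1,s,s^2,\dots$'' bookkeeping does not close up, because the conjugators vary along the $f$-orbit. You flag exactly this as ``the main obstacle'' and then leave it unresolved, which is to say the proposal defers the core of the proof.

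Third, your treatment of Case A by ``proceeds analogously'' is not adequate: in the power-map subcase the discrepancy between the two decompositions of $g_1$ and $g_2$ is governed by constants $c_1,c_2$ and a comparison scalar $u_x\circ u_x'^{-1}=c_4y$ that lie a priori in $\C^*$, not in a finite group, so there is nothing finite to pigeonhole on. The paper has to use the hypothesis $\Prep(G_{1,x_0})=\Prep(G_{2,x_0})$ at infinitely many periodic points a second time to derive the relation $c_4^{d^n-1}=\xi_1(c_2/c_1)^{(d-d^n)/(1-d)}$ for infinitely many $n$, with $\xi_1$ a root of unity, and only then can it exhibit an explicit relation of the form $G^{n_0n_1}=G^{n_0n_2}\circ F^{n_0(n_1-n_2)}$. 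Also, your closing remark that the uniformity of the pigeonhole rests on $t_{x_0}\mid d^n$ points at the wrong object: the relevant uniformity comes from pigeonholing the finitely many pairs $(s,t)$ with $t\leq d$ attached to the degree-$d$ fiber maps and then applying Lemma \ref{lem: uniform-conjugation} to get a single $t$-th root of unity $\epsilon$ over $\overline{\C(x)}$, not from the decomposition of the degree-$d^n$ composites. In short: right skeleton, but the decisive verifications are missing.
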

\begin{proof}
By assumption, we have that for any $x_0 \in \Per(f)$, we have $$ \Prep(g_{1,n-1,x_0} \circ \dots \circ g_{1,0,x_0}) = \Prep(g_{2,n-1,x_0} \circ \dots \circ g_{2,0,x_0}),$$
where $n$ is the period of $x_0$.\newline \newline 

 \noindent\textbf{Case I.} Now, suppose there are infinitely many $x_0 \in \Per(f)$ such that either $$G_{2,x_0} = g_{2,n-1,x_0} \circ \dots \circ g_{2,0,x_0}$$ or $$G_{1,x_0}= g_{1,n-1,x_0} \circ \dots \circ g_{1,0,x_0}$$ is linearly conjugated by a non-constant linear map $\phi_{x_0 } \in \C[y]$ to a special map. Without loss of generality, we assume that it is $G_{1,x_0}$. 

On the other hand, since $\Prep(G_{2,x_0}) = \Prep(G_{1,x_0})$, we have $G_{2,x_0}$ is also conjugated by the same non-constant polynomial $\phi_{x_0}$ to $ \xi S_{d^n}$ for some root of unity $\xi$. Now, by Lemma \ref{lem: decide-non-speical-and-special-cases} there exists a linear polynomial $u_{x}, v_x \in \overline{\C(x)}[y]$ with constant leading coefficients such that 
$$g_1(x,y)  = v_{x} \circ S_d\circ u_{x},$$
and 
$$ u_{f(x)} \circ v_{x} = c_1y$$
for some $c_1 \in \C^*$ if $S_d = y^d$ and $u_{f(x)} \circ v_{x}$ in a finite group otherwise. The same argument can be applied to $g_2(x,y)$ and $G_{2,x_0}$ to obtain linear polynomials $u'_{x}, v'_x \in \overline{\C(x)}[y]$ with constant leading coefficients such that 
$$g_2(x,y) =  v'_{x} \circ S_d \circ u'_{x} ,$$
$$ u'_{f(x)} \circ v'_x = c_2 y,$$
for some $c_2 \in \C^*$ if $S_d = y^d$ and $u_{f(x)} \circ v_{x}$ in a finite group otherwise.

\noindent\textbf{Subcase \RNum{1}.}
We first suppose $S_d= y^d$.
Then we further have, 
$$ v^{-1}_{f^{n-1}(x_0)} \circ G_{1,x_0} \circ u^{-1}_{x_0} = c^{(d- d^{n})/(1-d)}_1 y^{d^{n}}$$
$$ v'^{-1}_{f^{n-1}(x_0)} \circ G_{2,x_0} \circ u'^{-1}_{x_0} =  c^{(d- d^{n})/(1-d)}_2 y^{d^{n}}.$$

From these, we have 
\begin{equation*}
    u_{x_0} \circ G_{1,x_0} \circ u^{-1}_{x_0} = c^{(d- d^{n})/(1-d)+1}_1 y^{d^{n}},
\end{equation*}
\begin{equation*}
     u'_{x_0} \circ G_{2,x_0} \circ u'^{-1}_{x_0} = c^{(d- d^{n})/(1-d)+1}_2 y^{d^{n}}.
\end{equation*}

Since $\Prep(G_{1,x_0}) = \Prep(G_{2,x_0})$, we have 
\begin{equation}\label{eq: thm-4-11-prep-conj-eq}
    \Prep( u'_{x_0} \circ G_{1,x_0} \circ u'^{-1}_{x_0}) = \Prep( u'_{x_0} \circ G_{2,x_0} \circ u'^{-1}_{x_0}).
\end{equation}
Notice that 
\begin{equation}\label{eq: thm-4-11-c_3-1}
    u'_{x_0} \circ G_{2,x_0} \circ u'^{-1}_{x_0} = c_2^{(d-d^n)/(1 - d)+1} y^{d^n},
\end{equation}
since $u'_{x_0} \circ v'_{f^{n-1}(x_0)} = c_2 y$. Thus, there exits a linear polynomial $\mu (y) = c_3 y$ for a constant $c_3 \in \C^*$ such that 
\begin{equation}\label{eq: thm-4-11-c_3-2}
    \mu \circ  u'_{x_0} \circ  G_{2,x_0} \circ u'^{-1}_{x_0} \circ \mu^{-1} = y^{d^n}.
\end{equation}
Also, 
\begin{equation}\label{eq: thm-4-11-eq-root-1}
        u'_{x_0} \circ G_{1,x_0} \circ u'^{-1}_{x_0} \ = u'_{x_0} \circ v_{f^{n-1}(x_0)} \circ c^{(d- d^{n})/(1-d)+1}_1 y^{d^n} \circ u_{x_0} \circ u'^{-1}_{x_0}.
\end{equation}
Now, Equation (\ref{eq: thm-4-11-prep-conj-eq}) implies that 
\begin{equation}\label{eq: thm-4-11-eq-root-2}
     \mu \circ u'_{x_0} \circ G_{1,x_0} \circ u'^{-1}_{x_0} \circ \mu^{-1}= \xi_1 y^{d^n}
\end{equation}
for some root of unity $\xi_1$.
Now, put together Equation (\ref{eq: thm-4-11-eq-root-1}) and (\ref{eq: thm-4-11-eq-root-2}), we have 
\begin{equation*}
   \mu \circ  u'_{x_0} \circ u^{-1}_{x_0} \circ c_2 \cdot c^{(d- d^{n})/(1-d)}_1 y^{d^{n}} \circ u_{x_0}\circ u'^{-1}_{x_0} \circ \mu^{-1}= \xi_1 y^{d^{n}},
\end{equation*} 
since $u^{-1}_{x_0} \circ c_2y =v_{f^{n-1}(x_0)} $. This is equivalent to 
\begin{equation}\label{eq: main-1}
    u'_{x_0} \circ u^{-1}_{x_0} \circ c_2 \cdot c^{(d- d^{n})/(1-d)}_1 y^{d^{n}} \circ u_{x_0}\circ u'^{-1}_{x_0}  = \xi_1 c_2^{(d-d^n)/(1 - d) +1} y^{d^n}
\end{equation}
by Equations (\ref{eq: thm-4-11-c_3-1}) and (\ref{eq: thm-4-11-c_3-2}).
Thus, $u_{x_0} \circ u'^{-1}_{x_0}$ is a multiplication by constant for infinitely many $x_0 \in \Per(f)$. Therefore, \begin{equation}\label{eq: thm-4-11-uv-1}
    u_x \circ u'^{-1}_x = c_4 y
\end{equation} for some $c_4\in \C^*$ as $u_x$ and $u'_x$ both have constant linear coefficients.

Now, since Equation (\ref{eq: main-1}) holds for infinitely many $x_0 \in \Per(f)$, we have that there are infinitely many $n \in \N$ such that
\begin{equation}\label{eq: thm-4-11-crelat-1}
    c_4^{d^{n} - 1} = \xi_{1} (c_2/c_1)^{(d - d^{n})/(1 - d) }.
\end{equation}
From Equation (\ref{eq: thm-4-11-uv-1}), we also get that 
\begin{equation*}
    v_x' \circ c^{-1}_4c^{-1}_2y = v_x \circ c^{-1}_1 y.
\end{equation*}
Thus, for any $n \in \N$ we have
\begin{equation*}
    F^n (x,y)= (f^n(x) , v_{f^{n-1}(x)} \circ c_1^{(d-d^n)/(1 - d)}y^{d^n} \circ u_x)
\end{equation*}
\begin{align}
    G^n(x,y) & = (f^n(x) , v'_{f^{n-1}(x)} \circ c_2^{(d-d^n)/(1 - d)} y^{d^n} \circ u'_x ) \\
    & = (f^n(x), v_{f^{n-1}(x)} \circ c_4 c^{-1}_1 c_2^{(d-d^n)/(1 -d)+1} y^{d^n} \circ c^{-1}_4 u_x) \\
    & = (f^n(x), v_{f^{n-1}(x)} \circ c^{- d^n + 1}_4 c^{-1}_1 c_2^{(d-d^n)/(1 -d)+1} y^{d^n} \circ u_x)
\end{align}
By Equation (\ref{eq: thm-4-11-crelat-1}), we have there exists a $n_0 \in \N$ such that
\begin{equation*}
    G^{n_0}(x,y) = (f^{n_0}(x) , v_{f^{n_0-1}(x)} \circ \xi^{-1}_1 c_1^{(d-d^{n_0})/(1 - d)}y^{d^{n_0}} \circ u_x).
\end{equation*}
Notice that there exists a pair of positive integers $n_1$, $n_2$ such that $n_1 > n_2$ and
\begin{equation*}
    (\xi^{-1}_1 y^{d^{n_0}})^{ n_1} = (\xi^{-1}_1y^{d^{n_0}})^{ n_2} \circ (y^{d^{n_0}})^{ (n_1 - n_2)}.
\end{equation*}
Therefore, we have that 
\begin{equation*}
    G^{n_0n_1} = G^{n_0n_2} \circ F^{n_0(n_1 - n_2)}.
\end{equation*}

This concludes that the semigroup generated by $F$ and $G$ are not free. \newline 

\noindent\textbf{Subcase \RNum{2}.} Now, we suppose $S_d = \pm T_d$. We would have similarly
$$ v^{-1}_{f^{n-1}(x_0)} \circ G_{1,x_0} \circ u^{-1}_{x_0} = c_1s_1 T_{d^{n}} = u_{x_0} \circ G_{1,x_0} \circ u^{-1}_{x_0} ,$$

$$ v'^{-1}_{f^{n-1}(x_0)} \circ G_{2,x_0} \circ u'^{-1}_{x_0} =  c_2s_2 T_{d^{n}} =  u'_{x_0} \circ G_{2,x_0} \circ u'^{-1}_{x_0},$$
for infinitely many periodic point $x_0$ of $f$, where $s_1,s_2, c_1, c_2 \in \{\pm 1\}$ and $n$ is the period of $x_0$. 
Thus, 
\begin{equation*}
    G_{1,x_0} = u^{-1}_{x_0} \circ c_1 s_1 T_{d^n} \circ u_{x_0},
\end{equation*}
\begin{equation*}
    G_{2,x_0} = u'^{-1}_{x_0} \circ c_2 s_2 T_{d^n} \circ u'_{x_0}.
\end{equation*}
Since we have that $\Prep(G_{1,x_0}) = \Prep(G_{2,x_0})$, we have that 
\begin{equation*}
\Prep(u'_{x_0} \circ G_{1,x_0} \circ u'^{-1}_{x_0}) = \Prep(u'_{x_0} \circ G_{2,x_0} \circ u'^{-1}_{x_0})
    \end{equation*}
which implies that 
\begin{equation*}
    \Prep(u'_{x_0} \circ u^{-1}_{x_0} \circ c_1s_1 T_{d^n} \circ u_{x_0}\circ u'^{-1}_{x_0}) = \Prep(c_2s_2T_{d^n}).
\end{equation*}
Notice that this gives that 
\begin{equation*}
    u'_{x_0} \circ u^{-1}_{x_0} \in \{\pm y\}.
\end{equation*}
Therefore, we also have 
\begin{equation*}
    v^{-1}_{x_0} \circ v'_{x_0} \in \{\pm{y}\}.
\end{equation*}
Since the above are satisfied for infinitely many $x_0 \in \Per(f)$, we have 
\begin{equation*}
    u'_{x} \circ u^{-1}_{x} \in \{\pm y\},
\end{equation*}
\begin{equation*}
    v^{-1}_{x} \circ v'_{x} \in \{\pm{y}\}.
\end{equation*}
Therefore, 
\begin{equation*}
    F(x,y) = (f(x), v_x \circ S_d \circ u_x)
\end{equation*}
\begin{equation*}
    G(x,y) = (f(x),  v_x \circ sS_d \circ u_x)
\end{equation*}
for some $s \in \{\pm 1\}$. Thus, there exists a pair of positive integer $n_1$, $n_2$ such that 
\begin{equation*}
    G^{n_1 + n_2} = G^{n_1} \circ F^{n_2}.
\end{equation*}
We then conclude that the semigroup generated by $F$ and $G$ is not free in this case. \newline 
\newline

 \noindent\textbf{Case II.} Now, suppose there are only finitely many $x_0 \in \Per(f)$, such that at least one of $G_{1,x_0}$ and $G_{2,x_0}$ are linearly conjugated to special maps. Then we can take an infinite subset $S \subseteq \Per(f)$ such that for any $x_0 \in \Orb_f(S)$, $G_{1,x_0}$ and $G_{2,x_0}$ are non-special. Then, by a series of detailed studies on the Julia set of polynomials \cite{BA90}, \cite{BA92}, \cite{BE87}, \cite{SS95}, we have $G_{1,x_0} = \sigma \circ G_{2,x_0}$, where $\sigma$ is a non-constant linear polynomial defined over $\C$ fixing the Julia set $J(G_{2,x_0}) = J(G_{1,x_0})$. 

Now, Theorem \ref{thm: Ritts-decom} implies that 
\begin{equation}\label{eq: thm-4-11-case-2-1}
    \mu_{x_0} \circ g_{1,0,x_0} = g_{2,0,x_0},
\end{equation}
for some linear polynomial $\mu_{x_0} \in \C[y]$.
Also, notice that $$G_{3,x_0} = g_{1, n, x_0} \circ g_{1,n-1,x_0} \circ \dots \circ g_{1,1,x_0} $$ and 
$$ G_{4,x_0} =g_{2, n, x_0} \circ g_{2,n-1,x_0} \circ \dots \circ g_{2,1,x_0}  $$ 
are non-special as well. Furthermore, $$\Prep(G_{3,x_0}) = \Prep(G_{4,x_0})$$ and $$ g_{1, n, x_0} = g_{1,0,x_0},$$
$$ g_{2, n, x_0} = g_{2,0,x_0}.$$
Thus, we have, similarly, 
$$G_{3,x_0} = \sigma_1 \circ G_{4,x_0}$$
for a non-constant linear polynomial $\sigma_1$ fixing the Julia set of $G_{4,x_0}$ and hence
\begin{equation} \label{eq: left-Ritts-decomp}
    g_{1,0,x_0} \circ \nu_{x_0} = \sigma_{1} \circ g_{2,0,x_0},
\end{equation}
for some linear polynomial $\nu_{x_0} \in \C[y]$ by Theorem \ref{thm: Ritts-decom}. Now, by Lemma \ref{lem: sharing-common-iterates-decompos}, we have in particular that there exists a non-constant linear polynomial $\sigma'$ such that 
$$ \sigma_1 \circ G_{4,x_0} = G_{4,x_0} \circ \sigma' .$$
Thus, Theorem \ref{thm: Ritts-decom} implies that there exists a non-contant linear polynomial $\sigma''$ such that
$$ \sigma_1 \circ g_{2,n,x_0} = g_{2,n,x_0} \circ \sigma''.$$ 
Now, we can abuse the notation and write Equation (\ref{eq: left-Ritts-decomp}) simply as 
\begin{equation}\label{eq: thm-4-11-case-2-2}
    g_{1,0,x_0} \circ \nu_{x_0} = g_{2,0,x_0},
\end{equation} 
for some non-constant linear polynomial $v_{x_0}$ by combining two linear maps $v_{x_0}$ and $\sigma''$.
\newline

 \noindent\textbf{Subcase 1.} Suppose there are infinitely many $x_0 \in S$ such that both of $g_1(x_0,y)$ and $g_2(x_0,y)$ are not linearly related to power maps. Lemma \ref{lem: Ritt-decompos-common-iterates} implies that for infinitely many $x_0 \in S$ there exists non-constant linear polynomials $\phi_{ x_0}(y), \psi_{x_0}(y) \in \C[y]$, a polynomial $h_{x_0}(y) \in \C[y]$ which is not a power map, non-negative integer $s_0$ and positive integer $t_0$ such that
    $$ \phi_{x_0} \circ g_{1,0,x_0} \circ \phi^{-1}_{x_0}  = \psi_{x_0} \circ y^{s_0}h_{x_0}(y^{t_0}) ,$$
    $$\phi_{x_0} \circ g_{2,0,x_0} \circ \phi^{-1}_{x_0} = \psi_{x_0} \circ \epsilon'_0 y^{s_0}h_{x_0}(y^{t_0}) $$
    where $\epsilon'_0$ is $t_0$-th root of unity. 
    Then, since there are only finitely many pairs of integers $(s_0,t_0)$, where $s_0$ is non-negative and $t_0$ is positive satisfying $s_0, t_0 \leq \deg_y(g_1(x,y)),$, by the pigeonhole principle there exists a pair of $(s,t)$ such that for infinitely many $x_0 \in S$ the above hold with $(s,t)$ in place of $(s_0,t_0)$. Now, Lemma \ref{lem: uniform-conjugation} implies that there exists non-constant linear polynomials $\phi(x,y), \psi(x,y) \in \overline{\C(x)}[y]$ such that 
    $$ \phi \circ g_1(x,y) \circ \phi^{-1 } = \psi \circ \epsilon y^sh(x,y^t)$$
    $$\phi \circ g_2(x,y) \circ \phi^{-1 } =  \psi \circ y^sh(x,y^t) $$
    for some $h(x,y) \in \overline{\C(x)}[y]$, non-negative integer $s$, positive integer $t$ and $t$-th root of unity $\epsilon$. 
    
    If $\epsilon = 1$, then $F = G$ and we are done, so we assume that $\epsilon \neq 1$.
Now, $ G_{1,x_0} = \sigma \circ G_{2,x_0}$ implies that 
\begin{align}
     g_{1,n-1,x_0} \circ \dots \circ \phi^{-1}(f(x_0),y) \circ \psi(f(x_0),y) \circ \epsilon y^{s}h(f(x_0),(y^{t}))   \nonumber \\ \circ \phi(f(x_0),y) \circ \phi^{-1}(x_0,y) \circ \psi(x_0,y) \circ  \epsilon y^{s}h(x_0,y^t) \circ \phi(x_0,y)  \nonumber\\
     =\sigma \circ   g_{2,n-1,x_0} \circ \dots \circ \phi^{-1}(f(x_0),y) \circ \psi(f(x_0),y) \circ y^{s}h(f(x_0),(y^{t})) \nonumber \\ \circ \phi(f(x_0),y) \circ \phi^{-1}(x_0,y) \circ \psi(x_0,y) \circ  y^{s}h(x_0,y^t) \circ \phi(x_0,y). 
\end{align}
By Theorem \ref{thm: Ritts-decom} and the right cancellation, this implies \begin{equation}\label{eq: thm-4-11-ggroup-1}
    \phi(f(x_0),y) \circ \phi^{-1}(x_0,y) \circ \psi(x_0,y) \circ \epsilon y \circ \psi^{-1}(x_0,y) \circ \phi(x_0,y) \circ \phi^{-1}(f(x_0),y)  \in \mathcal{G}(y^{s}h(f(x_0),y^{t})),
\end{equation} which is a finite cyclic group. Since we know that $\epsilon y \in \mathcal{G}(y^{s}h(f(x_0),y^{t})) $, we further have that
$$ \phi(f(x_0),y) \circ \phi^{-1}(x_0,y) \circ \psi(x_0,y) = c_1 y$$
is a multiplication by a constant $c_1$ in $\C^*$ since we assumed that $\epsilon \neq 1$.

Notice that this holds for every point in the periodic cycle of $x_0$ by repeating the argument above with $f^i(x_0)$ in place of $x_0$ and therefore, we have 
\begin{equation}
    \phi(f^{i+1}(x_0),y) \circ \phi^{-1}(f^{i}(x_0),y) \circ \psi(x_0,y) = c_{i+1} y
\end{equation}
for some constant $c_{i+1} \in \C^*$ for every positive integer $i$.

Thus, we have for any positive integer $m$, denote
\begin{align}
    G_{1,m,x_0} = g_{1,m,x_0} \circ \dots \circ g_{1,0,x_0}
\end{align}
\begin{align}
     G_{2,m,x_0} = g_{2,m,x_0} \circ \dots \circ g_{2,0,x_0}
\end{align}
and we have

\begin{align}
   & G_{1,m, x_0} = \phi(f^{m+1}(x_0),y)^{-1} \circ c_{m+1} \epsilon y^sh(f^{m}(x_0),y^t) \circ c_{m}\epsilon  y^sh(f^{m-1}(x_0),y^t) \nonumber \\ & \circ \dots \circ c_1\epsilon y^sh(x_0,y^t) \circ \phi(x_0,y),
\end{align}
\begin{align}
    & G_{2,m, x_0} = \phi(f^{m+1}(x_0),y)^{-1} \circ  c_{m+1}  y^sh(f^{m}(x_0),y^t) \circ c_{m} y^sh(f^{m-1}(x_0),y^t) \nonumber \\ & \circ \dots \circ c_1 y^sh(x_0,y^t) \circ \phi(x_0,y).
\end{align}

Thus, there exists a pair of positive integers $n_{s,t}$ and $m_{s,t}$ only depending on $(s,t)$ such that 
\begin{align}
    G_{1,n_{s,t}, f^{m_{s,t}}(x_0)} \circ G_{2,m_{s,t},x_0} = G_{1,n_{s,t}+m_{s,t},x_0}.
\end{align} 
This implies
$$F^{n_{s,t}}(x_0,y) \circ G^{m_{s,t}}(x_0,y) = F^{n_{s,t}+m_{s,t}}(x_0,y). $$
Since it holds for infinitely many $x_0 \in S \subseteq \Per(f)$.
This implies 
$$ F^{n_{s,t}} \circ G^{m_{s,t}} = F^{n_{s,t}+ m_{s,t}}.$$ \newline

     \noindent\textbf{Subcase 2.} Suppose there are at most finitely many $x_0 \in S$ such that both of $g_1(x_0,y)$ and $g_2(x_0,y)$ are not linearly related to a power map. Then there are infinitely many $x_0 \in S$, such that for any $x'_0 \in \Orb_f(x_0)$, we have at least one of $g_1(x_0,y)$ and $g_2(x_0,y)$ is linearly related to a power map.
     
     Notice that a for $x_0 \in S$, without loss of generality, such that $g_1(x_0,y)$ is linearly related to a power map, there exists a non-constant linear polynomials $\psi_{x_0}, \phi_{x_0} \in \C[y]$ such that 
     $$ \psi_{x_0} \circ g_1(x_0,y) \circ \phi_{x_0} = y^d .$$
     Then the fact that $\mathcal{G}(y^d)$ is given by multiplying constants and Equation (\ref{eq: thm-4-11-case-2-1}) and (\ref{eq: thm-4-11-case-2-2}) together imply that
     $$ \psi_{x_0} \circ g_{2}(x_0,y) \circ \phi_{x_0} = c_{x_0}y^d$$
     for some $c_{x_0} \in \C^*$. In particular, $g_2(x_0,y)$ is also linearly related to a power map.

     Thus, we can assume that there is an infinite subset $S' \subseteq S$ such that for any $x_0 \in S'$, we have $g_1(x'_0,y)$ and $g_2(x'_0,y)$ are both linearly related to power maps for any $x'_0 \in \Orb_f(x_0)$. In particular, there exists non-constant linear polynomials $\phi_{x_0}$ and $\psi_{x_0}$ for each $x_0 \in \Orb_f(S')$, such that 
     $$ \psi_{x_0} \circ g_1(x_0,y) \circ \phi_{x_0} = y^d ,$$
     $$ \psi_{x_0} \circ g_{2}(x_0,y) \circ \phi_{x_0} = c_{x_0}y^d.$$

     Again, we assume that $c_{x_0} \neq 1$ except for at most finitely many $x_0 \in \Orb_f(S')$ as otherwise $g_1(x,y) = g_2(x,y)$ and we are done.
     
     Similarly as how we get Expression (\ref{eq: thm-4-11-ggroup-1}) in Subcase \RNum{1} above, $$G_{1,x_0} = \sigma \circ G_{2,x_0}$$ implies, by Ritts' decomposition, Theorem \ref{thm: Ritts-decom}, and the right cancellation, that 
     $$\phi^{-1}_{f(x_0)} \circ \psi^{-1}_{x_0} \circ c_{x_0} y \circ \psi_{x_0} \circ \phi_{f(x_0)}  \in \mathcal{G}(y^d),$$
     which also holds after replacing $x_0$ with $f^i(x_0)$ for any positive integer $i$. Thus, for any positive integer $i$, 
     $$ \phi^{-1}_{f^{i+1}(x_0)} \circ \psi^{-1}_{f^i(x_0)}$$
      is a multiplication by a constant in $\C^*$ for any $x_0 \in \Orb_f(S')$. 
     Thus, we have that there are infinitely many $x_0 \in \Per(f)$ such that 
     \begin{align}
         G_{1,x_0} = \psi^{-1}_{f^{n-1}(x_0)} \circ c_{1,x_0} y^{d^n} \circ \phi^{-1}_{x_0} 
     \end{align}
     \begin{equation}
          = \phi_{x_0} \circ c_{2,x_0} y^{d^n} \circ \phi^{-1}_{x_0},
     \end{equation}
     for some $c_{1,x_0}, c_{2,x_0} \in \C^*$ and $n \in \N$ is the period of $x_0$, which implies that $G_{1,x_0}$ is linearly conjugated to $y^{d^n}$.
     This leads us back to Case \RNum{1} and contradicts our assumption in Case \RNum{2}. So we are done with the proof. 

\end{proof}

\begin{lem}\label{lem: reduce-to-same-fiberation}
    Let $f(y)$, $h(x)$ and $g(x,y)$ be polynomials defined over $\C$, where $g(x,y)$ has constant leading coefficients as a polynomial in $\C[y][x]$. Let $g_i(x,y)$ denote $g(x,f^i(y))$ and $$G_{i,y_0}(x) = g_i(x, y_0) \circ \dots \circ g_0(x,y_0) .$$ Suppose that for any $y_0 \in \Per(f)$, we have 
    $$ \Prep(G_{n_0-1,y_0}) = \Prep(h)$$
    where $n_0$ is the period of $y_0$. We have that $g(x, y)$ doesn't depend on $y$ and $\Prep(g) = \Prep(h)$.
\end{lem}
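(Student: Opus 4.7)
The plan is to exploit the preperiodic-set equality via the unlikely intersection theorem of Baker--DeMarco, descend the resulting Julia-set equality to the individual fiber polynomials $g(\cdot,y_0)$ using cyclic rotation of the return-map factorization, and then use a Beardon--Baker--Eremenko rigidity argument (made effective by the constant-leading-coefficient hypothesis) together with pigeonhole and polynomial identity to force $g$ to be independent of $y$.

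First I would apply the unlikely intersection theorem \cite[Theorem 1.2]{BD11} to each $y_0\in\Per(f)$ of period $n_0$: the hypothesis provides infinitely many preperiodic points common to $G_{n_0-1,y_0}$ and $h$, hence $J(G_{n_0-1,y_0})=J(h)$ as compact subsets of $\C$ and the equilibrium measures coincide. Second, I would push this down to the fibers, showing $J(g(\cdot,y_0))=J(h)$ for every $y_0\in\Per(f)$. For $y_0\in\Fix(f)$ this is immediate since $G_{0,y_0}=g(\cdot,y_0)$. For $y_0$ of period $n_0\geq 2$, write
\[
G_{n_0-1,y_0}=Q\circ P,\qquad P=g(\cdot,y_0),\quad Q=g(\cdot,f^{n_0-1}(y_0))\circ\cdots\circ g(\cdot,f(y_0)),
\]
and observe that the cyclic rotation (using $f^{n_0}(y_0)=y_0$) gives $G_{n_0-1,f(y_0)}=P\circ Q$ with $f(y_0)\in\Per(f)$, so the hypothesis also yields $J(P\circ Q)=J(h)$. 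The tautological semi-conjugacy $P\circ(Q\circ P)=(P\circ Q)\circ P$ then implies $P^{-1}(J(h))=J(h)$, so $J(h)$ is a closed, totally $P$-invariant subset of $\C$ with infinitely many points. By the minimality of the Julia set among such sets, $J(P)\subseteq J(h)\subseteq K(P)$; since $J(h)$ has empty interior in $\C$ (being itself a polynomial Julia set), it lies in $\partial K(P)=J(P)$, yielding $J(g(\cdot,y_0))=J(h)$.

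Third, I would invoke the classical rigidity for polynomials sharing a Julia set. Each $g(\cdot,y_0)$ has degree $d=\deg_x g$ and, by the constant-leading-coefficient hypothesis, the same leading coefficient $a$. When $J(h)$ is non-exceptional the affine group $\Aut(J(h))$ stabilizing $J(h)$ setwise is finite (Beardon--Baker--Eremenko), so there are finitely many polynomials of degree $d$ with leading coefficient $a$ and Julia set $J(h)$. In the exceptional circle case $J(h)=\{|z|=r\}$ every admissible polynomial has the form $cx^d$ and $c=a$ is forced by the leading coefficient, producing a unique polynomial; in the Chebyshev case $J(h)=[\alpha,\beta]$, the sign $\pm 1$ is likewise pinned by $a$. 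In every case $\{g(\cdot,y_0):y_0\in\Per(f)\}$ is a finite subset of $\C[x]$, so pigeonhole on the infinite set $\Per(f)$ produces a single $P_0\in\C[x]$ with $g(x,y_0)=P_0(x)$ for infinitely many $y_0$. Since the coefficients of $g(x,y)$ in $x$ are polynomials in $y$, agreement on this Zariski-dense subset of $\A^1$ forces the polynomial identity $g(x,y)=P_0(x)$ for all $y$, i.e., $g$ does not depend on $y$. The return map then becomes $G_{n_0-1,y_0}=g^{n_0}$, so $\Prep(g)=\Prep(g^{n_0})=\Prep(h)$.

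The main obstacle is the rigidity step: one must treat the exceptional Julia sets (circle and interval) carefully, where $\Aut(J(h))$ is infinite, and use the constant-leading-coefficient hypothesis as the key ingredient that reduces the ambiguity to a finite set and makes the pigeonhole and polynomial-identity arguments go through.
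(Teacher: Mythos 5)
Your route is sound in outline and genuinely different from the paper's in its middle step. The paper identifies each fiber polynomial $g(\cdot,y_0)$ by combining the Schmidt--Steinmetz classification \cite{SS95} of polynomials sharing a Julia set with the Engstrom/Ritt decomposition theorem (Theorem \ref{thm: Ritts-decom}): it peels the innermost factor off the return maps at $y_0$ and at $f(y_0)$ and controls the ambiguity through the group $\mathcal{G}(p^m)$, which is finite in the non-special case and is tamed by the constant-leading-coefficient hypothesis in the power-map case. You instead use the cyclic rotation $G_{n_0-1,y_0}=Q\circ P$, $G_{n_0-1,f(y_0)}=P\circ Q$ with $P=g(\cdot,y_0)$, the semiconjugacy $P\circ(Q\circ P)=(P\circ Q)\circ P$ to force total invariance of $J(h)$ under $P$, and then Beardon-type rigidity \cite{BA90,BA92} (finitely many polynomials of given degree and leading coefficient with a prescribed non-exceptional Julia set, with the circle case pinned by the constant leading coefficient). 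The endgame (pigeonhole over the infinite set $\Per(f)$, polynomial identity in $y$, and $\Prep(g)=\Prep(g^{n_0})=\Prep(h)$) matches the paper; like the paper, you implicitly use $\deg f\geq 2$ and $\deg_x g\geq 2$, which hold where the lemma is applied.

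There is, however, a genuine gap in your fiberwise Julia-set identification. From $P^{-1}(J(h))=J(h)$ you correctly get $J(P)\subseteq J(h)$ (minimality) and $J(h)\subseteq K(P)$ (forward invariance plus compactness), but the inference ``$J(h)$ has empty interior, hence $J(h)\subseteq\partial K(P)=J(P)$'' does not follow: a compact set of empty interior inside $K(P)$ can meet the interior of $K(P)$. For instance, with $P(z)=z^2$ and $0<r<1$, the closure of $\bigcup_{n\in\Z}\{|z|=r^{2^n}\}$ is compact, perfect, completely $P$-invariant, has empty interior and lies in $K(P)$, yet is not contained in $J(P)$; so the properties you invoke are insufficient, and you must use that $J(h)$ is itself a polynomial Julia set. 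The step is easily repaired: $J(Q\circ P)=J(P\circ Q)=J(h)$ forces $K(Q\circ P)=K(P\circ Q)=K(h)$, so the two escape-rate functions coincide with the Green's function $G_h$ of $K(h)$, and the semiconjugacy gives $G_h\circ P=(\deg P)\,G_h$. Iterating, any $z\notin K(h)$ escapes to infinity under $P$, so $K(P)\subseteq K(h)$, while $P(K(h))=K(h)$ gives $K(h)\subseteq K(P)$; hence $K(P)=K(h)$ and $J(g(\cdot,y_0))=J(h)$, as you wanted. With this patch your argument goes through and yields the same conclusion as the paper's proof.
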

\begin{proof}
    Let's first suppose that $h$ is not a special polynomial. Then, from \cite{SS95}, for any $y_0 \in \Per(f)$, since $\Prep(G_{n_0-1,y_0}) = \Prep(h)$, we have $$J = J(G_{n_0-1,y_0}) = J(h)$$ and there exists a polynomial $p(x)$ only depending on $J$ and automorphisms $\sigma_1$, $\sigma_2$ preserving their Julia set such that $\Prep(p) = \Prep(h)$ and 
   \begin{equation}\label{eq: 4.12-1}
       G_{n_0-1, y_0} = \sigma_1 \circ p^{m_1},
       \end{equation}
    \begin{equation}
       h = \sigma_2 \circ p^{m_2}.
   \end{equation}
   In particular, let $y_0$ be a fixed point of $f(y)$, we have the above implies that $\deg_x(g) = \deg(p)^{m}$ for some positive integer $m$.
   Then, for any $y_0 \in \Per(f)$ , we have, by Equation (\ref{eq: 4.12-1}), Theorem \ref{thm: Ritts-decom} that there exists a non-constant linear polynomial $\mu_{y_0}(x)$ such that 
   \begin{equation}\label{eq: no-y-depend-1}
       \mu_{y_0} \circ g(x,y_0) = p^m. 
   \end{equation}
  
   Similarly, let $y_1= f(y_0)$, we have, by the same argument as above with $y_1$ in place of $y_0$,
   \begin{equation}
       g_{n_0-1}(x,y_1) \circ \dots \circ g_{0}(x,y_1) = \sigma'_1 \circ p^{m'_1},
   \end{equation}
   for some positive integer $m'_1$ and non-constant linear polynomial $\sigma'_1$ fixing the Julia set of $p$. Notice that $g_{n_0-1}(x,y_1) = g(x,y_0)$. Thus, by Theorem \ref{thm: Ritts-decom} again, we have that there exists a non-constant linear polynomial $\nu_{y_0}(x)$ such that
   \begin{equation}\label{eq: no-y-depend-2}
       g(x,y_0) \circ \nu_{y_0} = \sigma'_1 \circ p^m.
   \end{equation}
   Now, combining Equation (\ref{eq: no-y-depend-1}) and (\ref{eq: no-y-depend-2}), we have $\nu_{y_0} \in \mathcal{G}(p^m)$. 
   
    As $h$ is not a special polynomial, we have $\mathcal{G}(p^m)$ and the set of non-constant linear polynomials fixing the Julia set of $p$ are finite sets. Then, by the pigeonhole principle and Equation (\ref{eq: no-y-depend-2}), there exist non-constant linear polynomials $\nu$ and $\tau$, where $\tau (J(p)) = J(p)$, such that there are infinitely many $y_0 \in \Per(f)$ satisfies
   \begin{equation}
       g(x,y_0) \circ \nu = \tau \circ p^m.
   \end{equation}
   Thus, we have $g(x,y) \in \C[x]$ and therefore $\Prep(g) = \Prep(h)$.

   Suppose, on the other hand, that $h$ is a special polynomial. Then after a conjugation, we may assume $h(x) = S_d$, where $S_d$ is either a power map of degree $d$ or one of $\{\pm T_d\}$ for some positive integer $d > 1$. Thus, for any $y_0 \in \Per(f)$, we again have that 
   \begin{equation}
       G_{n_0-1,y_0} = \sigma_1  \circ S_{\deg(G_{n_0-1,y_0})},
   \end{equation}
   where $n_0$ is the period of $y_0$ and $\sigma_1$ is an automorphism fixing the Julia set of $S_d$ which is either an interval or a circle. Now, the exactly  same argument from above by looking at $G_{n_0-1,y_1}$, where $y_1 = f(y_0)$ would give,
   $$ g(x,y_0) = \sigma'_1 \circ S_{\deg_x(g)} \circ \nu^{-1}_{y_0},$$
   where $\nu_{y_0}$ and $\sigma'_1$ are in $\mathcal{G}(S_{\deg_x(g)})$. 
   
   If $S_{\deg_x(g)} \in \{\pm T_{\deg_x(g)}\}$, then $$\mathcal{G}(S_{\deg_x(g)}) = \{\pm x\}$$ and we have again by the pigeonhole principle that there are infinitely many $y_0$ such that 
   $$ g(x,y_0) = \tau \circ T_{\deg_x(g)} \circ \nu^{-1}$$
   where $\tau, \nu \in \{\pm x\}$. Thus, we are done in this case. 
   
   If $S_{\deg_x(g)}$ is a power map, then from the assumption that $g(x,y)$ has a constant leading coefficient as a polynomial in $\C[y][x]$, we have that for infinitely many $y_0 \in \Per(f)$, $$g(x,y_0) = cx^{\deg_x(g)} $$ for a constant $c$ doesn't depend on $y_0$. Therefore, $g(x,y) \in \C[x]$ and also $\Prep(g) = \Prep(h)$.

\end{proof}

\begin{lem}\label{lem: toward-Tits-alternative}
    Let $F_1(x,y) = (f_1(x), g_1(x,y)) $ and $F_2(x,y) = (f_2(x), g_2(x,y))$ be two polynomial skew-products  defined over $\C$ such that $\deg(f_1)=\deg_y(g_1)$, $  \deg_y(g_2) = \deg(f_2)$ and $g_1(x,y)$, $g_2(x,y)$ have constant leading coefficients as polynomials in $y$. Let $\langle F_1, F_2 \rangle$ be the semigroup generated by composition with generators $F_1$ and $F_2$. 
    
    Suppose $\Prep(F_1) = \Prep(F_2)$. Then for any pair of distinct $H_1, H_2 \in \langle F_1, F_2 \rangle$, there exists two elements $F_3(x,y)= (f(x), g_3(x,y))$ and $F_4(x,y)= (f(x),g_4(x,y))$ in the semigroup $\langle H_1, H_2 \rangle$, which can be represented by two distinct sequence of composition of $H_1$ and $H_2$, such that 
    for any $x_0 \in \Per(f)(\C)$, we have $$\Prep(G_{3,n_0-1,x_0}(y)) = \Prep(G_{4,n_0-1,x_0}(y)),$$ where $n_0$ is the period of $x_0$ and 
    $$ G_{3,n_0-1,x_0} = g_{3,n_0-1, x_0} \circ \dots \circ g_{3,0, x_0},$$
    $$ G_{4,n_0-1,x_0} = g_{4,n_0-1, x_0} \circ \dots \circ g_{4,0, x_0}.$$
\end{lem}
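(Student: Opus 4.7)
The plan is to reduce the problem to the hypothesis of Proposition~\ref{prop: key-prop-alternative} by constructing $F_3$ and $F_4$ as two distinct words in $H_1, H_2$ whose base-coordinate words happen to coincide, and then to read off the required fiber condition from a general equality of preperiodic sets of composition words. The heart of the argument, and the main obstacle, lies in establishing that the preperiodic set is a semigroup invariant, namely the regular polynomial skew-product analogue of Lemma~\ref{lem: preperiodic-stable-composition}.

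More precisely, the first step is to prove that for every $H \in \langle F_1, F_2 \rangle$ one has $\Prep(H) = \Prep(F_1) = \Prep(F_2)$. This should follow from the arithmetic machinery for regular polynomial skew products developed in \S \ref{sec:arithmeticdynpolyskewprods} and Appendix~\ref{appenA} --- canonical heights, semipositive adelic metrics on $\P^2$, and a Baker--DeMarco-type unlikely-intersection statement --- combined with a specialization argument to pass from number-field coefficients to coefficients of positive transcendence degree over $\Q$, exactly in the spirit of the proof of Lemma~\ref{lem: preperiodic-stable-composition}. Applied to $H_1$ and $H_2$ this yields $\Prep(H_1) = \Prep(H_2)$. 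Writing $H_i = (h_i(x), \tilde g_i(x,y))$ and projecting to the first coordinate, the observation that the first-return polynomial on any periodic fiber is a non-constant polynomial of degree $\geq 2$ (and therefore has preperiodic points, so that every preperiodic $x$-value lifts to a preperiodic point of $H_i$) gives $\pi_1(\Prep(H_i)) = \Prep(h_i)$, whence $\Prep(h_1) = \Prep(h_2)$.

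With this in hand, I would apply the one-dimensional Tits alternative for polynomials \cite[Corollary 4.11]{BHPT} to $h_1$ and $h_2$: since they share preperiodic sets, the semigroup $\langle h_1, h_2 \rangle$ under composition is not free, so there exist two distinct words $W_1 \neq W_2$ in a two-letter alphabet satisfying $W_1(h_1, h_2) = W_2(h_1, h_2) =: f$. I then set $F_3 := W_1(H_1, H_2)$ and $F_4 := W_2(H_1, H_2)$; these are two elements of $\langle H_1, H_2 \rangle$ represented by distinct composition sequences, each with base map $f$, so $F_3 = (f(x), g_3(x,y))$ and $F_4 = (f(x), g_4(x,y))$ as required. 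To verify the fiber condition, I use that $F_3, F_4 \in \langle F_1, F_2 \rangle$ and apply step one to get $\Prep(F_3) = \Prep(F_4)$. For any $x_0 \in \Per(f)$ of period $n_0$, iteration shows that $F_3^{n_0}$ preserves the vertical fiber $\{x_0\} \times \A^1$ and acts there as $y \mapsto G_{3, n_0 - 1, x_0}(y)$, and likewise for $F_4$; since the base orbit of $x_0$ is finite, finiteness of the full $F_3$-orbit of $(x_0, y)$ is equivalent to finiteness of the $G_{3, n_0 - 1, x_0}$-orbit of $y$. Hence $\{y : (x_0, y) \in \Prep(F_3)\} = \Prep(G_{3, n_0 - 1, x_0})$, symmetrically for $F_4$, and the equality $\Prep(F_3) = \Prep(F_4)$ immediately delivers the desired $\Prep(G_{3, n_0 - 1, x_0}) = \Prep(G_{4, n_0 - 1, x_0})$.
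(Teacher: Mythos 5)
The construction of $F_3$ and $F_4$ (project to the base, use $\Prep(f_1)=\Prep(f_2)$ and \cite[Corollary 4.11]{BHPT} to find two distinct words in $h_1,h_2$ with the same base map $f$, then lift the words to $H_1,H_2$) and the final observation that $\{y:(x_0,y)\in\Prep(F_3)\}=\Prep(G_{3,n_0-1,x_0})$ are both correct and coincide with the paper. The genuine gap is your step one: the claim that $\Prep(H)=\Prep(F_1)=\Prep(F_2)$ for \emph{every} $H\in\langle F_1,F_2\rangle$, which you say "should follow" from the machinery of \S\ref{sec:arithmeticdynpolyskewprods} together with "a Baker--DeMarco-type unlikely-intersection statement" and a specialization argument "exactly in the spirit of Lemma~\ref{lem: preperiodic-stable-composition}". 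No such two-dimensional statement is available: \cite[Theorem 1.2]{BD11} is strictly a $\P^1$ result, and the proof of Lemma~\ref{lem: preperiodic-stable-composition} also leans on the one-dimensional specialization theorems \cite{CS93} and \cite{DL16}; the analogues for polynomial skew products on $\A^2$ (an unlikely-intersection/dynamical Manin--Mumford-type theorem, and a "height zero implies preperiodic in non-isotrivial families" theorem) are neither proved in this paper nor in the cited literature. The easy half of your step one (the finite-invariant-set trick gives $\Prep(F_1)\subseteq\Prep(H)$) does go through, but the reverse inclusion is exactly the hard two-dimensional statement, and your conclusion $\Prep(F_3)=\Prep(F_4)$, on which the whole fiberwise deduction rests, is therefore unproven.

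Note also that the statement you actually need from step one for the base, namely $\Prep(h_1)=\Prep(h_2)$, does not require any two-dimensional input: since $h_1,h_2\in\langle f_1,f_2\rangle$ and $\Prep(f_1)=\Prep(f_2)$, it follows directly from the one-dimensional Lemma~\ref{lem: preperiodic-stable-composition}. The paper likewise avoids $\Prep(F_3)=\Prep(F_4)$ altogether for the fiber condition: fixing $x_0\in\Per(f)$ and a finitely generated field $L$ containing the relevant data, the set $S_1=\Prep(F_1)(L)=\Prep(F_2)(L)$ is finite by Moriwaki's Northcott property \cite{Mor00} and is invariant under $F_3$ and $F_4$; intersecting with the fiber over $x_0$, applying $F_3^{n_0}$, $F_4^{n_0}$ and projecting to the $y$-coordinate shows that $G_{3,n_0-1,x_0}$ and $G_{4,n_0-1,x_0}$ share arbitrarily many preperiodic points as $L$ grows, and then the \emph{one-dimensional} \cite[Theorem 1.2]{BD11} yields $\Prep(G_{3,n_0-1,x_0})=\Prep(G_{4,n_0-1,x_0})$. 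Rerouting your argument through this fiberwise reduction is what is needed to close the gap; as written, the core of your proof assumes a theorem that is not available.
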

\begin{proof}
    Since we have $\Prep(F_1) = \Prep(F_2)$, in particular we have $\Prep(f_1) = \Prep(f_2)$. Then the first coordinates of $H_1$ and $H_2$, denoted as $h_1$ and $h_2$, also share the same set of preperiodic points. Then \cite[Corollary 4.11]{BHPT} implies that $\langle h_1, h_2 \rangle$ is not a free semigroup under composition and, hence, there exits two sequence of integers $i_1, \dots, i_{n_1}$ and $j_1, \dots, j_{n_2}$ in $\{1,2\}$, representing two distinct sequences of compositions of $h_1$ and $h_2$, such that 
    $$ f=  f_{i_1} \circ \dots \circ f_{i_{n_1}} = f_{j_1} \circ \dots \circ f_{j_{n_2}} \in \langle h_1, h_2 \rangle.$$
    Let's denote 
    $$ F_3 = F_{i_1} \circ \dots \circ F_{i_{n_1}} = (f(x), g_3(x,y)),$$
    $$ F_4 = F_{j_1} \circ \dots \circ F_{j_{n_2}} = (f(x), g_4(x,y)),$$
    where both $F_3$ and $F_4$ are in $\langle H_1, H_2 \rangle$. We may assume $F_3 \neq F_4$ as otherwise the Lemma will trivially hold.
    
    We are left to show that for every $x_0 \in \Per(f)(\C)$, we have $$\Prep(G_{3,n_0-1,x_0}(y)) = \Prep(G_{4,n_0-1,x_0}(y)),$$ where $n_0$ is the period of $x_0$. Fix a $x_0 \in \Per(f)(\C)$. Let $L$ be a finitely generated field extension of $\Q$ containing $x_0$. Let's denote $$S_1 = \Prep(F_1)(L) = \Prep(F_2)(L) \subseteq L^2.$$ Notice that $\Prep(f) = \Prep(f_1) = \Prep(f_2)$ and $S_1$ is a finite set by the Northcott property of the height function introduced by Moriwaki \cite{Mor00}. Let $S_2 \subseteq S_1$ be the subset of $S_1$ with the first coordinate equal to $x_0$. We assume that $S_2$ is not empty by enlarging $L$ if necessary. Notice that, We have $F_3(S_{1}) \subseteq S_{1}$ and $F_4(S_1) \subseteq S_1$. Now, 
    $$ F^{n_0}_3(x,y) = (f^{n_0}(x), G_{3,n_0-1,x}(y)), $$
    $$F^{n_0}_4 (x, y) = (f^{n_0}(x), G_{4,n_0-1,x}(y)).$$
    Thus 
    $$F^{n_0}_3(S_2) \subseteq S_2,$$
    $$F^{n_0}_4(S_2) \subseteq S_2,$$
    since $f^{n_0}(x_0) = x_0$. Let $\pi_y(S_2) \subseteq L$ denote the projection of $S_2$ to the second coordinate. We have 
    $$ G_{3,n_0-1,x_0}(\pi_y(S_2)) \subseteq \pi_y(S_2),$$
    $$ G_{4,n_0-1,x_0}(\pi_y(S_2)) \subseteq \pi_y(S_2).$$
    Therefore $\pi_y(S_2) \subseteq \Prep(G_{3,n_0-1,x_0}) \cap \Prep(G_{4,n_0-1,x_0})$. Notice that if we enlarge the field $L$, the size of $\pi_y(S_2)$ will keep increasing. Since the above argument holds for any large enough finitely generated field extension $L$ of $\Q$, we conclude that $G_{3, n_0-1,x_0}$ and $G_{4,n_0-1,x_0}$ share infinitely many preperiodic points. Then the unlikely intersection \cite[Theorem 1.2]{BD11} implies that 
    $$ \Prep(G_{3, n_0-1,x_0}) = \Prep(G_{4, n_0-1,x_0}).$$
\end{proof}
\begin{cor}\label{cor:auxillarycorsharedprep}
    Let $F_1(x,y) = (f_1(x), g_1(x,y))$ and $F_2(x,y) = (f_2(x), g_2(x,y))$ be polynomial skew products defined over $\C$ such that 
    $\deg(f_1)=\deg_y(g_1)$, $  \deg_y(g_2) = \deg(f_2)$ and $g_1(x,y)$ and $g_2(x,y)$ have constant leading coefficients as a polynomial in $\C[x][y]$. Then $\Prep(F_1) = \Prep(F_2)$ implies the semigroup $\langle F_1, F_2 \rangle$ generated by composition doesn't contain non-abelian free subsemigroup.
    
\end{cor}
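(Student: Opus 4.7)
The plan is to argue by contradiction, converting a hypothetical non-abelian free subsemigroup into a string-combinatorial identity that is then ruled out by a degree comparison on first coordinates.

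Suppose $\langle F_1, F_2 \rangle$ contains a non-abelian free subsemigroup; then it contains elements $H_1, H_2$ that freely generate a rank-two subsemigroup. The standing assumption $\Prep(F_1) = \Prep(F_2)$ of the Corollary lets me invoke Lemma \ref{lem: toward-Tits-alternative} on $H_1, H_2$: I obtain $F_3 = (f(x), g_3(x,y))$ and $F_4 = (f(x), g_4(x,y))$ in $\langle H_1, H_2 \rangle$, represented by two \emph{distinct} composition words $U_3$ and $U_4$ in the letters $H_1, H_2$, with common first coordinate $f$ and satisfying $\Prep(G_{3,n_0-1,x_0}) = \Prep(G_{4,n_0-1,x_0})$ for every $x_0 \in \Per(f)$. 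Since the class of skew products with equal-degree coordinates and constant leading $y$-coefficient is preserved under composition, $(F_3, F_4)$ satisfies the hypotheses of Proposition \ref{prop: key-prop-alternative}, which then produces distinct abstract two-letter words $W_1 \neq W_2$ with $W_1(F_3, F_4) = W_2(F_3, F_4)$ as self-maps of $\A^2$.

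I now transport this relation back to $\langle H_1, H_2 \rangle$. Substituting $F_3 = U_3(H_1, H_2)$ and $F_4 = U_4(H_1, H_2)$ gives two words in the alphabet $\{H_1, H_2\}$ representing the same element, and the freeness of $\langle H_1, H_2 \rangle$ forces them to coincide letter by letter. The resulting combinatorial identity $W_1(U_3, U_4) = W_2(U_3, U_4)$ in the free semigroup $\{H_1, H_2\}^+$, together with the Defect Theorem from combinatorics on words, forces $U_3, U_4$ to be positive powers of a common word $V \in \{H_1, H_2\}^+$, say $U_3 = V^m$ and $U_4 = V^n$ with $m \neq n$ since $U_3 \neq U_4$. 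Reading off first coordinates, the first coordinate of $V$ is a composition of polynomials of degree $\geq 2$ and so has degree $\geq 2$; the equal-first-coordinate condition $f = (\text{first coord of }V)^m = (\text{first coord of }V)^n$ then forces $m = n$ by a degree comparison, contradicting $m \neq n$.

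The main obstacle is the step that passes from the semigroup-level relation produced by Proposition \ref{prop: key-prop-alternative} to a genuine string identity inside $\{H_1, H_2\}^+$, together with the subsequent appeal to the Defect Theorem. One must carefully exploit the fact that Lemma \ref{lem: toward-Tits-alternative} gives $U_3$ and $U_4$ as distinct \emph{abstract words} (not merely as maps), and it is precisely the equal-first-coordinate condition that rules out the otherwise problematic defect case in which $F_3$ and $F_4$ would lie in a cyclic subsemigroup of $\langle H_1, H_2 \rangle$ without yielding a contradiction to the freeness of $H_1, H_2$.
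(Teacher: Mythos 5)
Your proof is correct and follows essentially the same route as the paper: both arguments feed the given $H_1,H_2$ into Lemma \ref{lem: toward-Tits-alternative} to obtain $F_3,F_4\in\langle H_1,H_2\rangle$ with common first coordinate $f$ and matching fiberwise preperiodic sets over $\Per(f)$, and then invoke Proposition \ref{prop: key-prop-alternative}. Your closing defect-theorem and first-coordinate degree comparison simply makes explicit the transfer from the relation in $\langle F_3,F_4\rangle$ back to non-freeness of $\langle H_1,H_2\rangle$, a step the paper's proof asserts without elaboration.
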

\begin{proof}
    Borrowing the notations from Lemma \ref{lem: toward-Tits-alternative}, Lemma \ref{lem: toward-Tits-alternative} implies that for any subsemigroup of $\langle F_1, F_2 \rangle$ generated by two distinct elements, denoted as $H_1$ and $H_2$, we can find two elements $F_3$ and $F_4 \in \langle H_1, H_2 \rangle$, represented by two distinct sequences of compositions of $H_1$ and $H_2$, such that 
    $$ F_3(x,y) = (f(x), g_3(x,y)),$$
    $$ F_4(x,y) = (f(x), g_4(x,y)),$$
    and, moreover, for any $x_0 \in \Per(f)$, we have $$\Prep(G_{3,n_0-1,x_0}(y)) = \Prep(G_{4,n_0-1,x_0}(y)),$$ where $n_0$ is the period of $x_0$. Now, Proposition \ref{prop: key-prop-alternative} implies that $\langle F_3, F_4 \rangle$ is not a free semigroup under composition. Thus, $\langle H_1, H_2 \rangle$ is not free. Then we conclude that $\langle F_1, F_2 \rangle$ doesn't contain any non-abelian free subsemigroup.
\end{proof}
\begin{prop}\label{prop: orthogonal-fiber-case}
    Suppose $F_1(x,y) = (f_1(x), g_1(x,y))$ and $F_2(x,y) = (g_2(x,y), f_2(y))$ are two polynomial skew products defined over $\C$. Suppose $g_1$ and $g_2$ have constant leading coefficients as polynomials in $\C[x][y]$ and $\C[y][x]$ respectively and $\deg(f_1) = \deg_y(g_1)$, $\deg(f_2) = \deg_x(g_2)$. Suppose $\Prep(F_1) = \Prep(F_2)$, then we have the semigroup $\langle F_1, F_2 \rangle$ generated by compositions doesn't contains a non-abelian free subsemigroup.
\end{prop}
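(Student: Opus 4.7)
The plan is to first use $\Prep(F_1) = \Prep(F_2)$ to show that $g_1(x,y)$ actually lies in $\C[y]$ and $g_2(x,y)$ actually lies in $\C[x]$, reducing both $F_1$ and $F_2$ to genuine product endomorphisms of $(\P^1)^2$. Once this reduction is achieved, both $F_1$ and $F_2$ can be viewed as vertical polynomial skew products satisfying the hypotheses of Corollary \ref{cor:auxillarycorsharedprep}, which will finish the proof.

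For the reduction, fix any $x_0 \in \Per(f_1)$ of period $n$. The hypothesis that $g_1$ has constant leading coefficient as a polynomial in $\C[x][y]$ guarantees $\deg_y g_1(x_0, y) = \deg f_1$, so the fiberwise iterate
\[
G_{1,n-1,x_0}(y) := g_1(f_1^{n-1}(x_0), y) \circ \cdots \circ g_1(x_0, y)
\]
has degree $(\deg f_1)^n \geq 2$. Since $(x_0, y) \in \Prep(F_1)$ precisely when $y \in \Prep(G_{1,n-1,x_0})$, the equality $\Prep(F_1) = \Prep(F_2)$ combined with the fact that the $y$-projection of any point in $\Prep(F_2)$ must lie in $\Prep(f_2)$ yields $\Prep(G_{1,n-1,x_0}) \subseteq \Prep(f_2)$. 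Both of these being infinite preperiodic sets of polynomials of degree at least two, the Baker--DeMarco unlikely intersection theorem \cite[Theorem 1.2]{BD11} promotes this inclusion to equality $\Prep(G_{1,n-1,x_0}) = \Prep(f_2)$. This holds for every $x_0 \in \Per(f_1)$, so the $x \leftrightarrow y$-swapped analog of Lemma \ref{lem: reduce-to-same-fiberation} (whose proof is identical upon relabeling coordinates) applies with $f = f_1$, $g = g_1$, $h = f_2$ to conclude $g_1 \in \C[y]$. The symmetric argument applied directly to $F_2$, via Lemma \ref{lem: reduce-to-same-fiberation} with $f = f_2$, $g = g_2$, $h = f_1$, similarly yields $g_2 \in \C[x]$.

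After the reduction, $F_1(x,y) = (f_1(x), g_1(y))$ and $F_2(x,y) = (g_2(x), f_2(y))$ are both split (product) endomorphisms, which I view as vertical polynomial skew products: $F_1$ with first coordinate $f_1$ and fiber map $g_1(y)$ (trivially independent of $x$), and $F_2$ with first coordinate $g_2$ and fiber map $f_2(y)$. The fiber maps are univariate polynomials whose leading coefficients are scalars, so the ``constant leading coefficient'' condition of Corollary \ref{cor:auxillarycorsharedprep} holds trivially; the degree conditions $\deg f_1 = \deg_y g_1$ and $\deg g_2 = \deg_y f_2$ are inherited verbatim from the original assumptions; and $\Prep(F_1) = \Prep(F_2)$ is given. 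Corollary \ref{cor:auxillarycorsharedprep} therefore applies and delivers the desired conclusion. The main obstacle is the reduction step, where the ``constant leading coefficient'' assumption on $g_1$ and $g_2$ is essential to keep $G_{1,n-1,x_0}$ at degree $(\deg f_1)^n \geq 2$ for every $x_0 \in \Per(f_1)$; without this uniform degree bound, the Baker--DeMarco unlikely intersection could not be invoked fiber by fiber.
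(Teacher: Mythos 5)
Your proposal is correct and takes essentially the same route as the paper: compare fibers over periodic points of the base coordinate, use Baker--DeMarco to upgrade the inclusion of preperiodic sets to equality, invoke Lemma \ref{lem: reduce-to-same-fiberation} to kill the dependence on the base variable, and finish with Corollary \ref{cor:auxillarycorsharedprep}. The only (harmless) difference is that the paper applies Lemma \ref{lem: reduce-to-same-fiberation} just once, concluding $g_2(x,y)=g_2(x)$ and then feeding $F_1=(f_1(x),g_1(x,y))$ and $F_2=(g_2(x),f_2(y))$ directly into Corollary \ref{cor:auxillarycorsharedprep}, so your additional coordinate-swapped application showing $g_1\in\C[y]$ is sound but unnecessary.
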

\begin{proof}
    The assumption implies that for any $y_0 \in \Per(f_2)$, we have 
    $$ G_{2, n_0-1, y_0} = g_2(x, f^{n_0-1}(y_0)) \circ \dots \circ g_2(x, y_0)$$
    sharing the same set of preperiodic points with $f_1(x)$. Thus, Lemma \ref{lem: reduce-to-same-fiberation} implies $g_2(x,y)$ is actually a function only depending on $x$, i.e., $g_2(x,y) = g_2(x)$. Therefore, $F_2(x,y) = (g_2(x), f_2(y))$, where $\deg(g_2) = \deg(f_2)$, and applying Corollary \ref{cor:auxillarycorsharedprep}, we conclude the proof of the proposition.
\end{proof}
\begin{cor}\label{cor: general-skew-product-case}
    Let $F_1$ and $F_2$ be two regular polynomial skew products defined over $\C$. If $\Prep(F_1) = \Prep(F_2)$, then the semigroup $\langle F_1, F_2 \rangle$ generated by compositions doesn't contain a non-abelian free subsemigroup.
\end{cor}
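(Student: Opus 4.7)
The plan is to reduce Corollary \ref{cor: general-skew-product-case} to the two special cases already handled in the text: Corollary \ref{cor:auxillarycorsharedprep} (both skew products in the vertical form $(f_i(x), g_i(x,y))$) and Proposition \ref{prop: orthogonal-fiber-case} (one in vertical form, the other in horizontal form). The reduction is carried out by examining the invariant parallel-line fibrations of $F_1$ and $F_2$ and choosing a single simultaneous affine conjugation $\sigma$ that brings them into one of these two normal forms.

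By the definition of a regular polynomial skew product, each $F_i$ is affinely conjugate to a standard-form polynomial $(f_i(x), g_i(x,y))$, and consequently preserves a fibration of $\A^2$ by a family of parallel affine lines (the preimage of the vertical fibration under the conjugating automorphism). The hypothesis $\Prep(F_1) = \Prep(F_2)$ and the conclusion that $\langle F_1, F_2 \rangle$ contains no non-abelian free subsemigroup are both invariant under simultaneous affine conjugation $F_i \mapsto \sigma F_i \sigma^{-1}$, so I am free to choose $\sigma$ to suit the geometry.

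I would case-split on whether $F_1$ and $F_2$ admit a common invariant parallel-line fibration. In the affirmative case, I would choose $\sigma$ to send this common fibration to the vertical fibration $\{x = \text{const}\}$; both $\sigma F_1 \sigma^{-1}$ and $\sigma F_2 \sigma^{-1}$ then take the form $(\tilde{f}_i(x), \tilde{g}_i(x,y))$, and a direct coefficient computation verifies that $\deg(\tilde{f}_i) = \deg_y(\tilde{g}_i)$ and that the constant leading $y^{d_i}$-coefficient of $\tilde{g}_i$ persists under the conjugation. Corollary \ref{cor:auxillarycorsharedprep} then closes this case.

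Otherwise no choice of invariant fibrations coincides, in which case any invariant fibration of $F_1$ and any of $F_2$ must have linearly independent directions (two families of parallel lines with the same direction coincide as sets of lines). I would then take $\sigma$ to be a linear map sending the direction of the first to $(0,1)$ and the direction of the second to $(1,0)$, so that $\sigma F_1 \sigma^{-1}$ preserves the vertical fibration and takes the form $(\tilde{f}_1(x), \tilde{g}_1(x,y))$, while $\sigma F_2 \sigma^{-1}$ preserves the horizontal fibration and takes the form $(\tilde{g}_2(x,y), \tilde{f}_2(y))$, with the mirrored degree and constant-leading-coefficient conditions in $x$; Proposition \ref{prop: orthogonal-fiber-case} then finishes the proof. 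The only step that will require care is the bookkeeping to check that the standard-form conditions survive the chosen affine conjugation, but this ultimately reduces to observing that the linear part of $\sigma$ twists the relevant leading coefficients only by nonzero scalars, so no real obstacle arises.
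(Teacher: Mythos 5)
Your proposal is correct and takes essentially the same approach as the paper's proof: a case split on whether the invariant line fibrations of $F_1$ and $F_2$ can be made to coincide, followed by an affine change of coordinates reducing either to the common-vertical-fibration case handled by Corollary \ref{cor:auxillarycorsharedprep} or to the transverse (vertical/horizontal) case handled by Proposition \ref{prop: orthogonal-fiber-case}, together with the routine check that the degree and constant-leading-coefficient normalizations survive the coordinate change.
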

\begin{proof}
    If for some choice of coordinate $(x,y)$ of $\A^2$, $F_1(x,y) = (f_1(x), g_1(x,y))$ and $F_2(x,y) = (f_2(x), g_2(x,y))$, i.e., they are preserving the same $\A^1$ fiberation, then we can directly apply Corollary \ref{cor:auxillarycorsharedprep} to conclude the proof. Suppose otherwise, that there exists the choices of coordinates $(x,y)$ and $(z,w)$ such that $F_1(x,y) = (f_1(x),g_1(x,y))$ and $F_2(z,w) = (f_2(z), g_2(z,w))$. If $V(x)$ and $V(z)$ are parallel to each other then we are back to the first case by representing $(z, w)$ as a function of $(x,y)$. So, we assume that $(x,z)$ gives another coordinate of $\A^2$. Thus, under this new coordinate, we can write 
    $$ F_1(x,z) = (f_1(x), h_1(x,z)),$$
    $$ F_2(x,z) = (h_2(x,z), f_2(z)),$$
    where $f_1 \in \C[x]$, $f_2 \in \C[z]$, $h_1(x,z), h_2(x,z) \in \C[x,z]$ such that they have constant leading coefficients as elements in $\C[x][z]$ and $\C[z][x]$ respectively and $\deg(f_1) = \deg_z(h_1)$, $\deg(f_2) = \deg_x(h_2)$. Now, we can apply Proposition \ref{prop: orthogonal-fiber-case} to conclude the proof.
\end{proof}

\begin{rmk}\label{rmk: Tits-alternative-skew}
    Notice that for $F_1$ and $F_2$ satisfying the assumptions of Corollary \ref{cor: general-skew-product-case} above, if $\Prep(F_1) \neq \Prep(F_2)$ then there exists a positive integers $i$ such that $$\langle F^i_1, F^i_2\rangle$$ is a free subsemigroup by \cite[Theorem 1.3]{BHPT}. Therefore, the semigroup $\langle F_1, F_2 \rangle$ containing a non-abelian free subsemigroup or not is completely determined by whether $\Prep(F_1)$ and $\Prep(F_2)$ are agree.
\end{rmk}
\section{Arithmetic dynamics of polynomial skew products}\label{sec:arithmeticdynpolyskewprods}
\subsection{Arithmetic properties of canonical height for  polynomial skew products} Most of presented results (especially, the construction of height associated to polynomial skew products) in this section are largely inspired from \cite[\S 1]{DFR23} whose proof are included for the completeness for the reader in Appendix \ref{appenA}.\\

Let $(x,y)$ be affine coordinates on the affine place $\A^2_{\C}$. Recall that a polynomial skew product on $\C^2$ of degree $d\geq 2$ is a map of the form $f(x,y)=(p(x),q(x,y))$, where $p$ and $q$ are polynomials of degree $d$.
For each $n\geq 1$, we write $f^n(x,y)=(p^n(x), q^n(x,y)).$
\begin{prop}\label{prop:Nullstellensatz} For each place $v\in M_K$, there exist constants $C_v, C_v'\geq 1$ such that \begin{equation}
    C'_v\leq \frac{\max\{|p(x)|_v,|q(x,y)|_v\}}{\max\{|x|_v,|y|_v\}^d}\leq C_v \label{inequality:boundforskewproduct}
\end{equation}
for all $x,y\in \C_v$.  Moreover, for all but finitely many $v$, we can take $C_v=C'_v=1.$
\end{prop}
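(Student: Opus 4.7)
The plan is to prove the upper and lower bounds separately, both for the ratio $\max\{|p(x)|_v,|q(x,y)|_v\}/\max\{|x|_v,|y|_v\}^d$ (with the understanding, standard in the setup of Green functions for regular polynomial maps, that the denominator should be read as $\max\{|x|_v,|y|_v,1\}^d$ so that the bound is meaningful near the origin). The upper bound is a routine triangle inequality; the lower bound is the arithmetic incarnation of the hypothesis that $f$ extends to a morphism of $\P^2$, and I would extract it from the projective Nullstellensatz.

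For the upper bound, write $p(x)=\sum_{i=0}^d a_i x^i$ and $q(x,y)=\sum_{i+j\le d} b_{ij} x^i y^j$, with $a_d,b_{0,d}\in K^\times$ by the regularity of the skew product. For an archimedean place $v$, the triangle inequality gives
\[|p(x)|_v\le (d+1)\max_i|a_i|_v\cdot\max\{|x|_v,1\}^d,\]
and an analogous bound for $|q(x,y)|_v$; for non-archimedean $v$ the combinatorial factor $(d+1)$ disappears by ultrametricity. Taking $C_v$ to be the maximum of the resulting two bounds yields the upper inequality.

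For the lower bound, the hypothesis that $f=(p,q)$ is a \emph{regular} polynomial skew product means that $f$ extends to a morphism $\tilde f\colon\P^2\to\P^2$, i.e., the three degree-$d$ homogeneous polynomials
\[F_0=z^d,\qquad F_1=z^d p(x/z),\qquad F_2=z^d q(x/z,y/z)\]
have no common zero in $\P^2_{\overline{K}}$: on the line at infinity $\{z=0\}$ they restrict to $0,\,a_d x^d,\,b_{0,d} y^d$, which only simultaneously vanish at the excluded point $[0:0:0]$. By the projective Nullstellensatz there is an integer $N\ge d$ and homogeneous polynomials $A_{\star,i}\in K[x,y,z]_{N-d}$, for $\star\in\{x,y,z\}$ and $i\in\{0,1,2\}$, such that
\[x^N=\sum_i A_{x,i}F_i,\qquad y^N=\sum_i A_{y,i}F_i,\qquad z^N=\sum_i A_{z,i}F_i.\]
Dehomogenizing by $z=1$, taking $v$-adic absolute values, and maximizing over the three identities gives
\[\max\{|x|_v,|y|_v,1\}^N\le B_v\cdot\max\{|x|_v,|y|_v,1\}^{N-d}\cdot\max\{1,|p(x)|_v,|q(x,y)|_v\},\]
where $B_v$ aggregates the coefficient norms of the $A_{\star,i}$; dividing through by $\max\{|x|_v,|y|_v,1\}^{N-d}$ yields the lower bound with $C_v'=B_v^{-1}$.

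For the moreover claim, the Bezout-type identities above involve only finitely many coefficients of $K$, so for all $v$ outside a finite set these coefficients, together with the coefficients of $p$ and $q$, are $v$-adic units; at such non-archimedean places the ultrametric inequality lets us take $C_v=C_v'=1$. The main subtlety I expect is bookkeeping: the statement as written uses $\max\{|x|_v,|y|_v\}^d$ in the denominator, but the Nullstellensatz cleanly produces $\max\{|x|_v,|y|_v,1\}^d$, so one must either restrict to $(x,y)$ of sufficiently large $v$-adic norm (which is all that is needed for the construction of the Green function in the following subsection) or absorb the discrepancy near the origin into the constants $C_v,C_v'$.
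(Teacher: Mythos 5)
Your argument is correct in substance and is essentially the route of Silverman \cite[Theorem 3.11]{Sil07} that the paper says it follows, but it differs from what the appendix actually writes. For the lower bound the paper stays in affine coordinates and argues coordinatewise: it asserts that some power $x^e$ lies in the ideal $(p)\subset\C[x]$ and that $x^{e'},y^{e'}$ lie in $(q)\subset\C[x,y]$, then divides out. As literally written that divisibility is not available ($x^e\in(p)$ forces $p$ to be a constant multiple of $x^d$; take $p(x)=x^d-1$ for a counterexample), so the paper's computation only becomes meaningful after homogenizing, which is exactly what you do: you use that $(z^d,\tilde p,\tilde q)$ has no common zero in $\P^2$, apply the projective Nullstellensatz to put $x^N,y^N,z^N$ in the ideal they generate, and dehomogenize. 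One small slip: the restriction of $\tilde q$ to the line at infinity is the full top-degree homogeneous part of $q$, not just its $y^d$ term; the no-common-zero argument still goes through because setting $x=0$ kills the other monomials and leaves $y^d$. Your upper bound (triangle inequality, with the combinatorial factor only at archimedean $v$) and your ``moreover'' argument (the finitely many coefficients in the Bezout identities are $v$-adic units outside a finite set of places, plus ultrametricity) are the same in substance as the paper's, which phrases the latter as good reduction.

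One caveat about the bookkeeping you flag at the end: the discrepancy is not only in the denominator. Your Nullstellensatz estimate bounds $\max\{1,|p(x)|_v,|q(x,y)|_v\}$ from below, not $\max\{|p(x)|_v,|q(x,y)|_v\}$, and the extra $1$ in the numerator cannot be absorbed into $C_v'$: if $x_0\neq 0$ is a root of $p$ and $y_0$ a root of the monic polynomial $q(x_0,\cdot)$, then $(x_0,y_0)\neq(0,0)$ maps to the origin and the ratio in the statement vanishes there, so no positive constant works. Thus ``absorbing the discrepancy into the constants'' does not rescue the inequality exactly as printed; it has to be read, as the displayed homogeneous inequality following the proposition and its use in Corollary \ref{cor:uniformconvergenceofGreen} indicate, with $\max\{1,\cdot\}$ on both sides (equivalently as the $\log^+$ estimate), and that correct version is precisely what your proof establishes.
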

This readily yields the $v$-adic Green's function associated to the  polynomial skew product $f$.
\begin{cor}\label{cor:uniformconvergenceofGreen} The sequence of function 
$$g_{f,v,n}(x,y):=\frac{1}{d^n}\log^+\|f^n(x,y)\|_v$$ converges uniformly on $\C^2_v$ to a continuous function $g_{f,v}(x,y).$ It is customary to denote $\log^+(\cdot):=\max\{\log(\cdot),0\}$.
\end{cor}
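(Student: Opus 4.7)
The plan is to establish uniform convergence via a telescoping argument, showing that $\{g_{f,v,n}\}_{n \geq 1}$ is Cauchy in the sup-norm on $\C_v^2$ with geometrically decaying differences. The key ingredient is to upgrade Proposition \ref{prop:Nullstellensatz} into a uniform comparison
$$\bigl|\log^+ \|f(x,y)\|_v - d \log^+ \|(x,y)\|_v\bigr| \leq M_v$$
valid for every $(x,y) \in \C_v^2$, where $M_v \geq 0$ depends only on $f$ and $v$ (and equals $0$ for all but finitely many $v$, by the final assertion of Proposition \ref{prop:Nullstellensatz}).

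To prove this uniform bound, I would split $\C_v^2$ into the region $\Omega := \{\|(x,y)\|_v \geq 1\}$ and its complement. On $\Omega$, Proposition \ref{prop:Nullstellensatz} gives $C_v' \|(x,y)\|_v^d \leq \|f(x,y)\|_v \leq C_v \|(x,y)\|_v^d$, and the normalization $C_v, C_v' \geq 1$ together with $\|(x,y)\|_v \geq 1$ forces $\|f(x,y)\|_v \geq 1$ as well. Hence on $\Omega$ both $\log^+$'s coincide with ordinary logarithms, and the displayed difference is bounded by $\max\{\log C_v,\, -\log C_v'\}$. On the complementary bounded region $\{\|(x,y)\|_v < 1\}$, the term $\log^+ \|(x,y)\|_v$ vanishes, while $\|f(x,y)\|_v$ is uniformly controlled by an explicit expression in the coefficients of $p$ and $q$ (applying the non-archimedean or archimedean triangle inequality term by term to $p(x)$ and $q(x,y)$), yielding a uniform upper bound for $\log^+\|f(x,y)\|_v$ there. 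Combining both regions produces the desired constant $M_v$.

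Substituting $(x,y) \mapsto f^n(x,y)$ in this uniform estimate and dividing by $d^{n+1}$ gives
$$|g_{f,v,n+1}(x,y) - g_{f,v,n}(x,y)| = \frac{1}{d^{n+1}}\bigl|\log^+\|f(f^n(x,y))\|_v - d\log^+\|f^n(x,y)\|_v\bigr| \leq \frac{M_v}{d^{n+1}}$$
for all $(x,y) \in \C_v^2$ and all $n \geq 1$. Summing the geometric series shows that $(g_{f,v,n})$ is uniformly Cauchy on $\C_v^2$, hence converges uniformly to a function $g_{f,v}$. Each $g_{f,v,n}$ is continuous, being the composition of the continuous maps $f^n$, the sup-norm, and $\log^+$, so the uniform limit $g_{f,v}$ is continuous as well. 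The only mildly delicate point is the treatment of the small-norm region in the non-archimedean setting, where local compactness is unavailable; however, the explicit coefficient-wise bound handles this case cleanly, so I do not foresee any substantive obstacle.
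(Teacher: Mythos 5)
Your proof is correct and takes essentially the same route as the paper: a uniform one-step estimate $\bigl|\log^+\|f(x,y)\|_v - d\log^+\|(x,y)\|_v\bigr|\le M_v$ deduced from Proposition \ref{prop:Nullstellensatz}, followed by a telescoping/geometric-series argument giving uniform Cauchyness, and continuity of the uniform limit. The paper simply asserts this bound with $M_v=\max\{\log C_v,-\log C_v'\}$, whereas you justify it via the split into $\{\|(x,y)\|_v\ge 1\}$ and its complement; this is only filling in a detail the paper leaves implicit, so the two arguments coincide in substance.
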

We record some useful properties of the function $g_{f,v}(x,y)$.
\begin{prop} \label{prop:localgreenproperties} For each place $v\in M_K$, the continuous function $g_v: \C_v^2\rightarrow\R_{\geq 0}$ satisfies
\begin{enumerate}
\item $g_{f,v}\circ f (x,y)=dg_{f,v}(x,y)$;
\item $|g_{f,v}(x,y)-\log^+\|(x,y)\|_v|\leq C_v/(d-1)$;
\item $\{(x,y)\in \C_v^2 : g_{f,v}(x,y)=0\}=\{(x,y)\in \C_v^2: \|f^n(x,y)\|_v=O(1)\}$.
\end{enumerate}
\end{prop}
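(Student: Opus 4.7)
The plan is to derive all three items from the bounds on single-step growth supplied by Proposition \ref{prop:Nullstellensatz}, using the same telescoping-series technique that yields Green functions for polynomial endomorphisms of $\P^k$.

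For item (1), the functional equation is immediate from the limit definition given in Corollary \ref{cor:uniformconvergenceofGreen}: I would compute
\[
g_{f,v}(f(x,y))=\lim_{n\to\infty}\frac{1}{d^n}\log^+\|f^{n}(f(x,y))\|_v=d\cdot\lim_{n\to\infty}\frac{1}{d^{n+1}}\log^+\|f^{n+1}(x,y)\|_v=d\,g_{f,v}(x,y),
\]
which is valid once uniform convergence of $g_{f,v,n}$ is in hand.

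For item (2), I would first take logarithms in the two-sided estimate (\ref{inequality:boundforskewproduct}) of Proposition \ref{prop:Nullstellensatz}. After passing to $\log^+$ (using that $\log^+$ differs from $\log$ only on the unit ball, where one absorbs a bounded error into $C_v$ by enlarging it), this yields a one-step comparison of the form
\[
\bigl|\log^+\|f(x,y)\|_v-d\log^+\|(x,y)\|_v\bigr|\le \log C_v
\]
for a (possibly enlarged) constant $C_v\ge 1$, with $C_v=1$ for almost all $v$. Applying this one-step comparison to the point $f^n(x,y)$ and dividing by $d^{n+1}$ gives
\[
|g_{f,v,n+1}(x,y)-g_{f,v,n}(x,y)|\le \frac{\log C_v}{d^{n+1}}.
\]
Telescoping, summing the geometric series $\sum_{n\ge 0} d^{-(n+1)}=\frac{1}{d-1}$, and observing that $g_{f,v,0}=\log^+\|\cdot\|_v$ gives the claimed bound. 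This also re-establishes the uniform convergence asserted in Corollary \ref{cor:uniformconvergenceofGreen}, so (2) actually packages the convergence argument together with the bound.

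For item (3), I would use (1) and (2) in tandem. If $g_{f,v}(x,y)=0$, then (1) yields $g_{f,v}(f^n(x,y))=0$ for every $n\ge 0$, and (2) applied to $f^n(x,y)$ gives $\log^+\|f^n(x,y)\|_v\le \tfrac{\log C_v}{d-1}$, so the forward orbit is $v$-bounded. Conversely, if $\|f^n(x,y)\|_v\le M$ uniformly in $n$, then $g_{f,v,n}(x,y)=d^{-n}\log^+\|f^n(x,y)\|_v\le d^{-n}\log^+M\to 0$, and passing to the limit forces $g_{f,v}(x,y)=0$. There is no real obstacle here; the only point requiring care is the handling of the $\log^+$ versus $\log$ discrepancy in item (2), which must be managed uniformly in $v$ so that the bound $C_v$ remains trivial for almost all places.
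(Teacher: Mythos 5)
Your argument is correct and takes essentially the same route as the paper, which deduces the one-step estimate $|\log^+\|f(x,y)\|_v-d\log^+\|(x,y)\|_v|\le C$ from Proposition \ref{prop:Nullstellensatz} and telescopes, obtaining (2) by taking $m=0$ in the telescoping sum in the proof of Corollary \ref{cor:uniformconvergenceofGreen} and treating (1) and (3) as consequences of the limit definition. Your combined use of (1) and (2) for the implication $g_{f,v}(x,y)=0\Rightarrow\|f^n(x,y)\|_v=O(1)$ simply spells out what the paper leaves implicit, so there is no substantive difference.
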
 To ease notation, we write $g_f$ in place of $g_{f,\infty}$ when $v$ is archimedean. 
Consider a lift $F$ on $\C^3_v$ of a polynomial skew product  $f$ that extends to $\P^2_{\C_v}$. More precisely, we write $\tilde{p}(x,z)=z^dp\left(\frac{x}{z}\right)$ and $\tilde{q}(x,y,z)=z^dq\left(\frac{x}{z},\frac{y}{z}\right).$ The degree $d$ homogeneous map $F$ which lifts $f$ to $\C^3_v$ is $$F(x,y,z)=(\tilde{p}(x,z),\tilde{q}(x,y,z),z^d).$$ In homogeneous coordinates, the inequality (\ref{inequality:boundforskewproduct}) can be expressed as \begin{equation}
    C'_v\leq \frac{\max\{|\tilde{p}(x,z)|_v, |\tilde{q}(x,y,z)|_v, |z|^d_v\}}{\max\{|x|_v,|y|_v,|z|_v\}}\leq C_v.\notag
\end{equation} Define the homogeneous local Green function $G_{F,v}:\C^3_v\backslash\{(0,0,0)\}\rightarrow\R$ associated to the global lift $F$ by   $$G_{F,v}(x,y,z)=\lim_{n\rightarrow\infty} \frac{1}{d^n}\log\|F^n(x,y,z)\|_v.$$  In addition, we obtain some useful properties of the homogeneous local dynamical height function:
\begin{prop}\label{prop:homogeneousgreenfunctionproperties} The function $G_{F,v}$ satisfies the following properties: 
\begin{enumerate} \item $G_{F,v}$ is continuous on $\C^3\backslash\{(0,0,0)\}$, $G_{F,\infty}$ is a plurisubharmonic function on $\C^3$, and the convergence is uniform on a compact subset of $\C^3_v\backslash\{(0,0,0)\}$;
\item $G_{F,v}(\lambda z,\lambda w, \lambda t)=\log|\lambda|+G_{F,v}(x,y,z)$;
\item $G_{F,v}\circ F=dG_{F,v}$;
\item $|G_{F,v}(x,y,z)-\log\max\{|x|_v,|y|_v,|z|_v\}|\leq \log C_v/(d-1)$;
\item $\mathcal{K}_{F,v}=\{(x,y,z)\in \C^3_v: G_{F,v}(x,y,z)\leq0\}$ is the filled Julia set of $F$.
\end{enumerate} 
\end{prop}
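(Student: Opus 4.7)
The plan is to carry out the standard Hubbard--Papadopol--Sibony construction of the homogeneous Green function: every assertion will follow from the homogeneous-coordinate sandwich inequality derived from Proposition~\ref{prop:Nullstellensatz},
\[
\log C'_v \;\le\; \log\|F(w)\|_v - d\log\|w\|_v \;\le\; \log C_v,
\]
valid for $w \in \C^3_v\setminus\{0\}$. Since $\|F(w)\|_v \ge C'_v \|w\|_v^d > 0$ whenever $w\neq 0$, the iterates $F^n(w)$ stay away from the origin and all logarithms below are finite. Set $g_n(w) := d^{-n}\log\|F^n(w)\|_v$. Applying the sandwich at $F^n(w)$ and dividing by $d^{n+1}$ yields the telescoping estimate
\[
|g_{n+1}(w) - g_n(w)| \;\le\; \frac{M_v}{d^{n+1}}, \qquad M_v := \max(|\log C_v|, |\log C'_v|),
\]
with right-hand side independent of $w$. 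Thus $\{g_n\}$ is uniformly Cauchy on the unit sphere $\{\|w\|_v = 1\}$, and the homogeneity $F^n(\lambda w) = \lambda^{d^n} F^n(w)$ gives $g_n(\lambda w) - g_n(w) = \log|\lambda|_v$ (independent of $n$), so uniform Cauchyness propagates to every compact subset of $\C^3_v\setminus\{0\}$, establishing the uniform convergence and the continuity of $G_{F,v}$ asserted in~(1).

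Properties (2) and (3) follow at once. For (2), passing $g_n(\lambda w) = \log|\lambda|_v + g_n(w)$ to the limit gives $G_{F,v}(\lambda w) = \log|\lambda|_v + G_{F,v}(w)$. For (3),
\[
G_{F,v}(F(w)) = \lim_{n\to\infty} d^{-n}\log\|F^{n+1}(w)\|_v = d \lim_{n\to\infty} d^{-(n+1)}\log\|F^{n+1}(w)\|_v = d\, G_{F,v}(w).
\]
For the plurisubharmonicity claim of (1) at the archimedean place, each $g_n = \max_i d^{-n}\log|F^n_i|$ is plurisubharmonic on $\C^3$ as the maximum of logarithms of holomorphic polynomials, and the psh class is closed under uniform convergence on compact subsets; the isolated singularity at the origin is removable for psh functions bounded above, as is the case here by the bound~(4) below.

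For (4), I would simply telescope the Cauchy estimate, using $g_0(w) = \log\|w\|_v$:
\[
\bigl|G_{F,v}(w) - \log\|w\|_v\bigr| = \Bigl|\sum_{n=0}^{\infty}(g_{n+1}(w) - g_n(w))\Bigr| \le \sum_{n=0}^{\infty}\frac{M_v}{d^{n+1}} = \frac{M_v}{d-1},
\]
which is the stated bound (one may absorb $|\log C'_v|$ into $\log C_v$ by enlarging $C_v$ if necessary). Finally (5) is a direct consequence of (3) and (4): if $G_{F,v}(w)\le 0$ then $G_{F,v}(F^n(w)) = d^n G_{F,v}(w) \le 0$ by (3), so (4) forces $\log\|F^n(w)\|_v \le M_v/(d-1)$ uniformly in $n$, whence $\{F^n(w)\}$ is bounded; conversely, if $\|F^n(w)\|_v \le M$ uniformly then $G_{F,v}(w) = \lim_n d^{-n}\log\|F^n(w)\|_v \le \lim_n d^{-n}\log M = 0$. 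The proposition is thus a formal package of consequences of Proposition~\ref{prop:Nullstellensatz}, so the main obstacle has already been handled there; the only genuine subtleties here are the routine removable-singularity step at the origin for plurisubharmonicity and the observation that the estimates run identically over non-archimedean $v$ because $\|\cdot\|_v$ on $\C^3_v$ is a max-norm matching the definition used throughout.
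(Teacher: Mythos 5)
Your proposal is correct and follows essentially the same route as the paper's Appendix~A argument: the sandwich inequality from Proposition~\ref{prop:Nullstellensatz} drives a telescoping/uniform-Cauchy estimate giving (1) and (4), homogeneity and an index shift give (2) and (3), and (5) is the boundedness characterization of the filled Julia set. Your handling of (5) via (3) and (4) is a touch more direct than the paper's radius-and-induction argument for the reverse inclusion, and your explicit removable-singularity remark at the origin tightens the paper's brief ``limit of psh is psh'' statement, but these are refinements of the same proof rather than a different approach.
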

Consider a line bundle $\mathcal{O}(1)\rightarrow\P^2_K$. Here we identify the space of sections on $\mathcal{O}_{\P^2}(1)$ with the space of linear polynomials of the form $a_0x+a_1y+a_2z$ where each $a_i\in K$. On the trivialization of the line bundle over $\{x\neq 0\}$ the section is given by $\frac{a_0x+a_1y+a_2z}{x}$. Likewise for affine charts $\{y\neq 0\}$ and $\{z\neq 0\}$. We can endow a line bundle $\mathcal{O}_{\P^2}(1)$ with a metric which induced by $G_{F,v}$ in the sense that for any section $s:=a_0x+a_1y+a_2z$ $$\|s\|_v([x:y:z])=|a_0x+a_1y+a_2z|_ve^{-G_{F,v}}.$$
It is clear that the definition is well-defined because $G_{F,v}$ scales logarithmically Proposition \ref{prop:homogeneousgreenfunctionproperties}(2). That is, for $X=[x:y:z]\in \P^2_K$ and $\lambda\in K^*$
\begin{align*}\|s\|_v(\lambda X)&=|s(\lambda X)|_ve^{-G_{F,v}(\lambda X)}\\&=|\lambda|_v|s(X)|_ve^{-\log|\lambda|_v}e^{-G_{F,v}}\\&=|s(X)|_ve^{-G_{F,v}}=\|s\|_v(X).\end{align*}
For each $\mathcal{X}:=(x,y,z)\in (\overline{K})^3\backslash\{(0,0,0)\}$, we define 
$$h_{F}(\mathcal{X}):=\frac{1}{\mathrm{deg(\mathcal{X})}}\sum_{\mathcal{X}'\in \mathrm{Gal}(\overline{K}/K)\cdot \mathcal{X}}\sum_{v\in M_K}N_vG_{F,v}(\mathcal{X}').$$ As usual,  $\mathrm{Gal}(\overline{K}/K)$ is the absolute Galois group of $\overline{K}$ over $K$ and $\deg \mathcal{X}:=|\mathrm{Gal}(\overline{K}/K)\cdot \mathcal{X}|$ is the cardinality of the Galois orbit of $\mathcal{X}$. It is worthwhile to point out that $h_F$ is well-defined by the product formula. That is, $h_F(x,y,z)=h_F(\lambda x,\lambda y,\lambda z)$ for any $\lambda \in \overline{K}$. As a result, we also have a well-defined $h_f:\P^2_{\overline{K}}\rightarrow\R$. 
\begin{prop}\label{prop:propertiesofheightkewprodpoly} For any $(x,y)\in \A^2_{\overline{K}}$, we write
    $$h_f(x,y):=\frac{1}{\deg (x,y)}\sum_{(x',y')\in \mathrm{Gal}(\overline{K}/K)\cdot (x,y)}\left(\sum_{v\in M_K}N_vg_{f,v}(x',y')\right).$$ The canonical height function $h_f:\A^2_{\overline{K}}\rightarrow\R$ associated to the polynomial skew product $f$ satisfies the properties:
\begin{enumerate}
    \item $h_f(x,y)$ is non-negative.
    \item $h_f\circ f(x,y)=(\deg f) h_f(x,y)$.
    \item $h_f(x,y)-h(x,y)=O(1)$ where $h$ is the Weil height on $\A^2_{\overline{K}}$.
    \item $\mathrm{PrePer}(f)=\{h_f=0\}$.
\end{enumerate}
\end{prop}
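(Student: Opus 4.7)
The plan is to derive each of the four properties by combining the local estimates already collected in Proposition \ref{prop:localgreenproperties} with the product formula and Northcott's finiteness theorem; this is the standard Call--Silverman telescoping construction adapted to skew products, with the local Green functions $g_{f,v}$ playing the role of local canonical height contributions.

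Items (1) and (2) should be essentially immediate. For (1), each $g_{f,v}$ is the uniform limit of the non-negative functions $d^{-n}\log^+\|f^n(\cdot)\|_v$ by Corollary \ref{cor:uniformconvergenceofGreen}, so every term in the double sum defining $h_f$ is non-negative. For (2), Proposition \ref{prop:localgreenproperties}(1) gives $g_{f,v}\circ f = d\cdot g_{f,v}$ at every place; since $f$ is defined over $K$, composition with $f$ maps the Galois orbit of $(x,y)$ onto the Galois orbit of $f(x,y)$ with uniform multiplicity $m = \deg(x,y)/\deg f(x,y)$, and this $m$ cancels exactly when one reindexes the summation, leaving the functional equation $h_f(f(x,y)) = d\cdot h_f(x,y)$.

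For (3), I would apply Proposition \ref{prop:localgreenproperties}(2) which controls $g_{f,v} - \log^+\|\cdot\|_v$ by $C_v/(d-1)$, together with the last sentence of Proposition \ref{prop:Nullstellensatz} asserting that $C_v = 1$ for all but finitely many places $v\in M_K$. The difference $h_f - h$ then becomes a sum of $N_v$-weighted terms that vanish off a finite exceptional set $S\subset M_K$ and are uniformly bounded on $S$, giving $|h_f - h|\leq (d-1)^{-1}\sum_{v\in S}N_v C_v = O(1)$.

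Property (4) is the usual two-direction Northcott argument. If $(x,y)$ is preperiodic, iterating (2) yields $h_f(f^n(x,y)) = d^n\, h_f(x,y)$; the orbit is finite so the left side remains bounded, and since $d\geq 2$ this forces $h_f(x,y) = 0$. Conversely, if $h_f(x,y) = 0$, then by (2) every forward iterate also satisfies $h_f = 0$, and (3) bounds their Weil heights uniformly; since all iterates lie in a fixed number field $L = K(x,y)$ of finite degree over $K$, Northcott's finiteness theorem forces the forward orbit to be finite, i.e., $(x,y)$ is preperiodic. The only step that requires genuine bookkeeping is the orbit-multiplicity matching in (2); all other parts are routine consequences of the local Green function theory already established in Propositions \ref{prop:Nullstellensatz}--\ref{prop:homogeneousgreenfunctionproperties}.
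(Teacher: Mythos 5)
Your proof is correct and is essentially the argument the paper has in mind: the paper gives no details, simply calling (1)--(3) standard and citing Northcott's theorem (Silverman, Theorem 3.22) for (4), and your write-up is exactly that standard Call--Silverman-style argument built on Proposition \ref{prop:Nullstellensatz} and Proposition \ref{prop:localgreenproperties}. The Galois-orbit multiplicity bookkeeping in (2) and the two-direction Northcott argument in (4) are carried out correctly, so there is nothing to add.
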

The proof of items (1), (2), and (3) are standard while (4) follows from Northcott's theorem (cf. \cite[Theorem 3.22]{Sil07}). As one might expect, preperiodic points of polynomial skew products can be detected locally at each place $v\in M_K$. 
\begin{lem}\label{lem:loctoglobskewprodpoly} (Local-to-global principle for  polynomial skew products) Let $f$ be any  polynomial skew product defined over a number field $K$. Then $(x,y)\in \A^2_{\overline{K}}$ is $f$-preperiodic if and only if all Galois conjugates of $(x,y)$ are contained in in the filled Julia set $ \{g_{f,v}=0\}$ for all $v\in M_K$.
\end{lem}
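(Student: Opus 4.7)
The strategy is to exploit the decomposition of the global canonical height $h_{f}$ as a sum of nonnegative local contributions, combined with the fact, already established, that $h_{f}$ detects preperiodicity. No new input beyond Propositions~\ref{prop:localgreenproperties} and \ref{prop:propertiesofheightkewprodpoly} should be required; the lemma is essentially a formal consequence of them, in direct analogy with the classical Call--Silverman local--global principle and the version for H\'enon maps proved earlier in the paper.

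The first thing I would record is the sign property: Proposition~\ref{prop:localgreenproperties} asserts that $g_{f,v}\colon \C_v^2\to \R_{\geq 0}$ is nonnegative at every place $v\in M_K$, and that its zero set coincides with the filled Julia set, i.e.\ $\{g_{f,v}=0\}$ is exactly the locus where the forward orbit of $f$ stays bounded with respect to $\|\cdot\|_v$. This nonnegativity is the crucial feature that makes the local-to-global argument reversible.

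For the forward direction, assume $(x,y)\in \A^2_{\overline{K}}$ is $f$-preperiodic. Proposition~\ref{prop:propertiesofheightkewprodpoly}(4) gives $h_f(x,y)=0$. By definition
\[
0 \;=\; h_f(x,y) \;=\; \frac{1}{\deg(x,y)}\sum_{(x',y')\in \Gal(\overline{K}/K)\cdot(x,y)}\;\sum_{v\in M_K} N_v\, g_{f,v}(x',y'),
\]
and since each $N_v\geq 0$ and each $g_{f,v}(x',y')\geq 0$, every term in this double sum must vanish. Consequently $g_{f,v}(x',y')=0$ for every Galois conjugate $(x',y')$ of $(x,y)$ and every place $v$, i.e.\ every Galois conjugate lies in the filled Julia set $\{g_{f,v}=0\}$ at every place. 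The converse is obtained by reading the same identity in the opposite direction: if every Galois conjugate of $(x,y)$ lies in $\{g_{f,v}=0\}$ for every $v\in M_K$, then each summand above is zero, hence $h_f(x,y)=0$, and Proposition~\ref{prop:propertiesofheightkewprodpoly}(4) yields preperiodicity.

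Given that every ingredient is already in place, I do not anticipate any genuine obstacle; the only care needed is to state the argument for the Galois orbit (not just the single point $(x,y)$), which is forced by the form of the height formula. The same proof template, with $g_{f,v}$ playing the role of $G_{f,v}=\max\{G^+_{f,v},G^-_{f,v}\}$, was already used in the H\'enon-type case in the earlier section, so the write-up can essentially mirror that claim.
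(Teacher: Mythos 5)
Your proposal is correct and follows essentially the same route as the paper's own proof: decompose $h_f$ into the nonnegative local terms $N_v\,g_{f,v}$ over the Galois orbit, use Proposition \ref{prop:propertiesofheightkewprodpoly}(4) to translate preperiodicity into $h_f=0$, and use Proposition \ref{prop:localgreenproperties}(3) to identify $\{g_{f,v}=0\}$ with the $v$-adic filled Julia set. The only difference is cosmetic: you spell out the converse direction explicitly, which the paper leaves implicit.
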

For archimedean $v$ ($\C_v\cong \C$), the positive current $T:=dd^c g_{f}$ extends to the positive current on $\P^2_{\C}$ which we still denoted by $T$. To see this, the homogeneous Green's function $G_F$ uniquely determines the positive $(1,1)$-current $T$ on $\P^2_{\C}$ by $\pi^* T:=dd^c G_F$ where $\pi :\C^{3}\backslash\{(0,0,0)\}\rightarrow\P^2$ is the projection. We have $T=dd^c(G_F\circ s)$ where $s$ is a holomorphic section of $\pi$ on an open set. The current $T$ is independent of the holomorphic section $s$.  The support of $T$ is the complement of the Fatou set of $f$. The wedge product $\mu_f:=T \wedge T$ is a well-defined equilibrium measure on the compact set $\{g_f=0\}$.  This is the complex Monge-Amp$\grave{\text{e}}$re measure in the sense of Bedfor-Taylor \cite{BT76}.  For more details, the reader may consult \cite[\S 3, \S 8]{FS99} and \cite[\S 1]{Jo99}. 

For nonarchimedean $v$, Chambert-Loir \cite[\S 1.3]{CL11} constructed a non-archimedean analogue of the complex Monge-Amp$\grave{\text{e}}$re measure $\mu_f$. Roughly speaking, 
the (non-archimedean) Monge-Amp$\grave{\text{e}}$re measures of the function $g_{f,v}$ is assigned to be the positive measure  $c_1(\mathcal{O}_{\P^2}(1),\|\cdot\|_{g_{f,v}})^{\wedge 2}$. Note that  the metric $\|\cdot\|_{g_{f,v}}$ on line bundle $\mathcal{O}_{\P^2}(1)$ is associated to $g_{f,v}$ via $\|s\|_{g_{f,v}}:=e^{-g_{f,v}}$ where the section $s$  corresponds to the constant function $1$ on affine analytic space $\A^{2,\mathrm{an}}_{\C_v}\hookrightarrow \P^{2,\mathrm{an}}_{\C_v}$.\\

We assert that the set  $\{g_{f,v}(x,y)=0\}$ is the polynomially convex hull of $\mathrm{supp}(\mu_{f,v})$.
\begin{lem}\label{lem:largestconvexhullsupport} For each $v\in M_K$, the filled Julia set $\mathcal{K}_{\C_v}(f)$ of polynomial skew product $f$ is the largest compact set in $\A^{2,\mathrm{an}}_{\C_v}$ such that $$\sup_{\mathcal{K}_{\C_v}(f)}|P|=\sup_{\mathrm{supp}(\mu_{f,v})}|P|$$ for all $P\in \C_v[X,Y]$.
\end{lem}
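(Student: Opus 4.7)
The strategy is to prove that the polynomial convex hull of $\mathrm{supp}(\mu_{f,v})$ inside $\A^{2,\mathrm{an}}_{\C_v}$ coincides with the filled Julia set $\mathcal{K}_{\C_v}(f)$; once this identification is in hand, the stated equality of suprema and its maximality follow tautologically from the definition of the polynomial convex hull. Write $K:=\mathrm{supp}(\mu_{f,v})$ and $\widehat K$ for its polynomial convex hull; the goal is $\widehat K=\mathcal{K}_{\C_v}(f)$.

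The inclusion $\widehat K\subseteq\mathcal{K}_{\C_v}(f)$ is the easy half. First, $K\subseteq\mathcal{K}_{\C_v}(f)$: for archimedean $v$, $\mu_{f,v}=dd^c g_{f,v}\wedge dd^c g_{f,v}$ and $g_{f,v}$ is pluriharmonic on the open set $\{g_{f,v}>0\}$, so $dd^c g_{f,v}$ and its self-intersection vanish there, forcing $\mathrm{supp}(\mu_{f,v})\subseteq\{g_{f,v}=0\}=\mathcal{K}_{\C_v}(f)$ by Proposition \ref{prop:localgreenproperties}(3); the non-archimedean case is analogous using Chambert-Loir's construction of $\mu_{f,v}$ from the semipositive metric $\|\cdot\|_{g_{f,v}}$. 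Second, $\mathcal{K}_{\C_v}(f)$ is polynomially convex: by Corollary \ref{cor:uniformconvergenceofGreen}, $g_{f,v}$ is the uniform limit of $d^{-n}\log^+\|f^n\|_v$, which are maxima of normalized logs of polynomial coordinate functions, so $\mathcal{K}_{\C_v}(f)=\{g_{f,v}\leq 0\}$ is cut out by polynomial inequalities. Together these yield $\widehat K\subseteq\mathcal{K}_{\C_v}(f)$, which gives $\sup_{\mathcal{K}_{\C_v}(f)}|P|_v\geq\sup_{K}|P|_v$ for all $P$; the reverse is automatic on the polynomial hull.

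The substantive inclusion is $\mathcal{K}_{\C_v}(f)\subseteq\widehat K$. Introduce the Siciak--Zahariuta extremal function
\begin{equation*}
V_K(\mathcal{X})\;=\;\sup\Bigl\{\tfrac{1}{\deg P}\log|P(\mathcal{X})|_v\;:\;P\in\C_v[X,Y]\setminus\{0\},\ \sup_{K}|P|_v\leq 1\Bigr\},
\end{equation*}
so $\widehat K=\{V_K\leq 0\}$. I plan to prove $V_K=g_{f,v}$. The inequality $g_{f,v}\leq V_K$ comes by exhibiting $g_{f,v}$ as a supremum of valid competitors, via the defining limit $g_{f,v}=\lim d^{-n}\log^+\|f^n\|_v$ applied to the coordinate components of the lift $F$ from Proposition \ref{prop:homogeneousgreenfunctionproperties}, each of which vanishes on $K$. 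For $V_K\leq g_{f,v}$: each competitor $u=\tfrac{1}{\deg P}\log|P|_v$ is (pluri)subharmonic, vanishes on $K$, and satisfies $u\leq\log^+\|\cdot\|_v+O(1)=g_{f,v}+O(1)$ by Proposition \ref{prop:localgreenproperties}(2), so the Bedford--Taylor domination principle in the archimedean case, and the Boucksom--Favre--Jonsson / Yuan--Zhang domination principle in the non-archimedean case, applied to the full-mass Monge--Amp\`ere measure $\mu_{f,v}=(dd^c g_{f,v})^{\wedge 2}$, forces $u\leq g_{f,v}$ globally. Taking the supremum over $u$ yields $V_K\leq g_{f,v}$, and hence $\mathcal{K}_{\C_v}(f)=\{g_{f,v}\leq 0\}\subseteq\{V_K\leq 0\}=\widehat K$. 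The main technical obstacle is the uniform invocation of the domination principle across places, especially in the Berkovich setting; this rests on the continuity of $g_{f,v}$ from Corollary \ref{cor:uniformconvergenceofGreen} and the fact that $\mu_{f,v}$ is a probability measure on $\P^{2,\mathrm{an}}_{\C_v}$.
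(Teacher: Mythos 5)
Your strategy --- identify $\mathcal{K}_{\C_v}(f)$ with the polynomial hull of $\mathrm{supp}(\mu_{f,v})$ and read off the sup equality --- is, at its core, the same domination inequality $\tfrac{1}{\deg P}\log\bigl(|P|_v/\sup_{\mathrm{supp}(\mu_{f,v})}|P|_v\bigr)\leq g_{f,v}$ that the paper establishes, and your archimedean half is essentially fine: the competitor $u=\tfrac{1}{\deg P}\log|P|_v$ lies in the Lelong class, $g_{f,v}$ lies in the class $\mathcal{L}^+$ by Proposition \ref{prop:localgreenproperties}(2), $u\leq 0=g_{f,v}$ on $\mathrm{supp}(\mu_{f,v})$, and the classical (Bedford--Taylor) domination principle gives $u\leq g_{f,v}$ everywhere. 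Two small slips there: the competitors do not \emph{vanish} on $K$, they are only $\leq 0$ there (which is all you need); and $g_{f,v}$ is \emph{not} pluriharmonic on $\{g_{f,v}>0\}$ --- for $f(x,y)=(x^2,y^2)$ one has $g_f=\max\{\log^+|x|,\log^+|y|\}$, whose $dd^c$ is nonzero far outside the filled Julia set. What is true, and standard, is that $(dd^c g_f)^{\wedge 2}=0$ off $\{g_f=0\}$, which is what $\mathrm{supp}(\mu_{f,v})\subseteq\mathcal{K}_{\C_v}(f)$ actually requires.

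The genuine gap is the non-archimedean half of your key step. There is no off-the-shelf ``domination principle'' on $\P^{2,\mathrm{an}}_{\C_v}$ that applies to the singular competitor $\tfrac{1}{\deg P}\log|P|_v$: the Boucksom--Favre--Jonsson theory is developed over discretely (or trivially) valued fields of residue characteristic zero, and Yuan--Zhang \cite{YZ17} prove a \emph{Calabi uniqueness} theorem for continuous semipositive metrics --- a uniqueness statement, not a domination statement, and not applicable to a function with poles along $\{P=0\}$. This is exactly the difficulty the paper's proof (following \cite[Lemma 6.3]{DF17}) is built to avoid: set $C_0=\sup_{\mathrm{supp}(\mu_{f,v})}|P|_v$, fix $\varepsilon>0$, and replace your competitor by the max-regularization $\widetilde{g}:=\max\{g_{f,v},\tfrac{1}{\deg P}\log(|P|_v/(C_0+\varepsilon))\}$, which \emph{is} a continuous semipositive metric on $\mathcal{O}_{\P^2}(1)$ and agrees with $g_{f,v}$ on a neighborhood of $\mathrm{supp}(\mu_{f,v})$; by locality of the Monge--Amp\`ere/Chambert-Loir measure (\cite[Corollary A.2]{DF17}) and the fact that both measures are probability measures, the two Monge--Amp\`ere measures coincide, and then the Calabi theorem of \cite{YZ17} forces $\widetilde{g}=g_{f,v}$, which is precisely the domination inequality you want, uniformly at all places. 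If you route your argument through this regularization you recover the paper's proof; as written, your non-archimedean step invokes a theorem that is not available in this setting.
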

\begin{prop}\label{prop:smallpointsofpolyskewprods} Let $K$ be a number field and let $f$ be a polynomial skew product  defined over $K$ of degree $\geq2$. Suppose that $\{(x_n,y_n)\}_{n\geq 1}$ is a generic sequence of algebraic points in $\A^2_{\overline{K}}$ which satisfies $$f^n(x_n,y_n)=c(x_n,y_n)$$ for all $n\in \N$ where $c: \A^2\rightarrow\A^2$ is a polynomial map. Then 
$$\lim_{n\rightarrow\infty}h_f(x_n,y_n)=0.$$
\end{prop}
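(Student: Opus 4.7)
The plan is to mimic, essentially verbatim, the argument used for H\'enon type maps in Proposition \ref{prop:smallpoints}. The three ingredients are already in hand: the functional equation $h_f\circ f = d\, h_f$ from Proposition \ref{prop:propertiesofheightkewprodpoly}(2), the comparison $h_f = h + O(1)$ with the Weil height from Proposition \ref{prop:propertiesofheightkewprodpoly}(3), and the standard fact that a morphism $c: \A^2 \to \A^2$ of degree $\deg c$ (viewed via the embedding $(x,y) \mapsto [x:y:1]$) satisfies $h(c(p)) \leq (\deg c)\, h(p) + O(1)$ for every $p \in \A^2_{\overline{K}}$ avoiding the indeterminacy locus.

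First I would iterate the functional equation to obtain
\[
h_f(f^n(x_n, y_n)) = d^n\, h_f(x_n, y_n),
\]
valid for every $n \geq 1$. On the other hand, combining the comparison $h_f = h + O(1)$ with the degree inequality for $c$ gives
\[
h_f(c(x_n, y_n)) \leq (\deg c)\, h_f(x_n, y_n) + O(1),
\]
where the implicit constant depends only on $f$ and $c$, not on $n$. Substituting $f^n(x_n, y_n) = c(x_n, y_n)$ and rearranging yields
\[
(d^n - \deg c)\, h_f(x_n, y_n) \leq O(1).
\]

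Since $d \geq 2$, for all sufficiently large $n$ the coefficient $d^n - \deg c$ is positive and tends to $+\infty$. Combined with the non-negativity of $h_f$ from Proposition \ref{prop:propertiesofheightkewprodpoly}(1), this forces $h_f(x_n, y_n) \to 0$ as $n \to \infty$, which is the desired conclusion.

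There is no real obstacle: the argument is a direct transcription of the H\'enon case, with the Green function $g_{f,v}$ and canonical height $h_f$ for polynomial skew products playing exactly the roles that $G_{f,v}$ and $\tilde{h}_f$ played before. The only caveat worth flagging is that one must know $c(x_n, y_n)$ lies away from the indeterminacy locus of the projective extension of $c$ (which it does, being a morphism on $\A^2$), so the height inequality for $c$ applies without exceptional sets. The genericity of $\{(x_n, y_n)\}$ is not used in this proposition itself; it will be needed later when invoking arithmetic equidistribution.
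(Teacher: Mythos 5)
Your proposal is correct and is essentially the same argument as the paper's: iterate the functional equation $h_f\circ f = d\,h_f$, compare $h_f$ with the Weil height to get $h_f(c(p))\leq(\deg c)h_f(p)+O(1)$, and combine with $f^n(x_n,y_n)=c(x_n,y_n)$ to force $(d^n-\deg c)h_f(x_n,y_n)\leq O(1)$. Your closing remarks (non-negativity of $h_f$, no indeterminacy issue since $c$ is a morphism on $\A^2$, genericity not needed here) are accurate and match the paper's intent.
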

\begin{proof} The proof follows  along the same lines as explained in Proposition \ref{prop:smallpoints} and Proposition \ref{prop:smallpointsproduvar}. We sketch an idea here. Using the invariant property of height associated to skew product polynomial $f$ (cf. Proposition \ref{prop:propertiesofheightkewprodpoly}(3)), we have 
\begin{equation}h_f(f^n(x_n,y_n))=(\deg(f))^nh_f(x_n,y_n).\label{eq:smallptsskewprod1}\end{equation} Now, we can fix an embedding $\A^2_{\overline{K}}\hookrightarrow \P^2_{\overline{K}}$ and so
\begin{equation}h_f(c(x_n,y_n))\leq (\deg c)h_f(x_n,y_n)+O(1)\label{eq:smallptsskewprod2}\end{equation} (see Hindry-Silverman \cite[\S B.2]{HS00}). Combining equation (\ref{eq:smallptsskewprod1}) and inequality (\ref{eq:smallptsskewprod2}), we obtain
$$((\deg f)^n-\deg c)h_f(x_n,y_n)\leq O(1).$$ The implicit constant is independent of $n$. Taking $n\rightarrow\infty$, we conclude that the sequence $\{(x_n,y_n)\}$ is $f$-dynamically small (i.e., $h_f(x_n,y_n)\rightarrow0$).
\end{proof}
\subsection{Common zeros of polynomial skew products}
\begin{prop} \label{prop:semposadelmetricforskewprodpoly}
For any polynomial skew product $f$ of degree $\geq 2$. The collection $\{g_{f,v}\}$ defines a semipositive adelic metric on $\mathcal{O}_{\P^2}(1)$ in the sense of Definition \ref{defn:semipositadelicmetricsdef}.
\end{prop}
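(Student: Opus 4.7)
The plan is to verify properties (SP 1)--(SP 4) of Definition \ref{defn:semipositadelicmetricsdef} one at a time for the collection $\{g_{f,v}\}$, leveraging the local analysis already established in Proposition \ref{prop:Nullstellensatz}, Corollary \ref{cor:uniformconvergenceofGreen}, Proposition \ref{prop:localgreenproperties}, and Proposition \ref{prop:homogeneousgreenfunctionproperties}.

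For (SP 1), I would work through the homogeneous lift: using Proposition \ref{prop:homogeneousgreenfunctionproperties}(4), the difference $G_{F,v}(x,y,z)-\log\max\{|x|_v,|y|_v,|z|_v\}$ is bounded, and by item (1) of that proposition it is continuous on $\C_v^3 \setminus \{0\}$ and logarithmically homogeneous by item (2). Descending through the projection $\pi:\C_v^3\setminus\{0\}\to \P^2_{\C_v}$ and restricting to the affine chart $\{z\neq 0\}$, the bounded continuous difference $G_{F,v}-\log\max\{|x|_v,|y|_v,|z|_v\}$ gives a continuous extension of $g_{f,v}(x,y)-\log^+\|(x,y)\|_v$ to all of $\P^2_{\C_v}$.

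For (SP 2), Proposition \ref{prop:Nullstellensatz} furnishes $C_v=C_v'=1$ for all but finitely many places $v$. For each such $v$ the recursion $g_{f,v,n+1}=\frac{1}{d}g_{f,v,n}\circ f$ together with the squeeze inequality (\ref{inequality:boundforskewproduct}) collapses to $g_{f,v,n}(x,y)=\log^+\|(x,y)\|_v$ for every $n$, so in the limit $g_{f,v}=\log^+\|\cdot\|_v$.

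For (SP 3), at each archimedean place Proposition \ref{prop:homogeneousgreenfunctionproperties}(1) states that $G_{F,v}$ is plurisubharmonic on $\C_v^3$, and plurisubharmonicity is preserved under the substitution $z=1$ yielding the chart expression for $g_{f,v}(x,y)$; equivalently, $g_{f,v}$ is the uniform limit of the plurisubharmonic functions $d^{-n}\log^+\|f^n(x,y)\|_v=d^{-n}\log\max\{|p^n(x)|_v,|q^n(x,y)|_v,1\}$, and a decreasing or uniform limit of plurisubharmonic functions is plurisubharmonic.

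For (SP 4), I would simply exhibit the required presentation explicitly. For any non-archimedean place $v$, Corollary \ref{cor:uniformconvergenceofGreen} says $g_{f,v}$ is the uniform limit of
\[
g_{f,v,n}(x,y)=\frac{1}{d^n}\log^+\|f^n(x,y)\|_v=\frac{1}{d^n}\log\max\{|p^n(x)|_v,\, |q^n(x,y)|_v,\, 1\},
\]
and since $p^n(x)$, $q^n(x,y)$, and $1$ all lie in $K[x,y]$, each $g_{f,v,n}$ is a positive multiple of the required form. The main subtlety is the uniformity in (SP 4): but this is precisely the content of Corollary \ref{cor:uniformconvergenceofGreen}, which was in turn deduced from the two-sided bound in Proposition \ref{prop:Nullstellensatz}. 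Collecting these four verifications completes the proof. The only slightly delicate point I anticipate is making (SP 1) rigorous on the line at infinity; this is handled cleanly by working with the homogeneous Green's function $G_{F,v}$ and using its log-homogeneity to descend continuously to $\P^2$, an argument that was already assembled in Proposition \ref{prop:homogeneousgreenfunctionproperties}.
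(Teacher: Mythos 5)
Your proposal is correct and follows essentially the same route as the paper, which simply deduces the proposition from Proposition \ref{prop:Nullstellensatz}, Proposition \ref{prop:localgreenproperties}, and Proposition \ref{prop:homogeneousgreenfunctionproperties} (together with Corollary \ref{cor:uniformconvergenceofGreen}); you have merely spelled out the routine verification of (SP 1)--(SP 4) that the paper leaves implicit. All four of your verifications are sound, including the descent of $G_{F,v}-\log\max\{|x|_v,|y|_v,|z|_v\}$ to $\P^2$ via log-homogeneity for (SP 1) and the use of $C_v=C_v'=1$ at good places for (SP 2).
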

\begin{proof}
    This follows immediately from Proposition \ref{prop:Nullstellensatz},  Proposition \ref{prop:localgreenproperties}, Proposition \ref{prop:homogeneousgreenfunctionproperties}, and Proposition \ref{prop:propertiesofheightkewprodpoly}.
\end{proof}
Proposition \ref{prop:semposadelmetricforskewprodpoly} allows us to apply arithmetic equidistribution of Yuan and Zhang. We state here a version of points of small dynamical height. The proof is identical to Theorem \ref{thm:equidistributionofsmallpoints}.  
\begin{thm}\label{thm:equidistributionpolyskewprods} (Equidsitribution for points of small height of polynomial skew products)
Let $f$ be a polynomial skew product defined over a number field $K$ such that $\deg f\geq 2$. Suppose that $\{(x_n,y_n)\}$ is a generic sequence on $\P^2_{\overline{K}}$ such that $h_f(x_n,y_n)\rightarrow0$ with respect to the arithmetic canonical height $h_f$. Then, for any place $v\in M_K$, the sequence of probability measure supported equally on the Galois orbit of $(x_n,y_n)$
	$$\frac{1}{\deg(x_n,y_n)}\sum_{z'\in \mathrm{Gal}(\overline{K}/K)\cdot (x_n,y_n)}\delta_{z'}$$ converges weakly to $\mu_{f,v}$ on the Berkovich analytic space $\P^2_{\mathrm{Berk},\C_v}$. 
\end{thm}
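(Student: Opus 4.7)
The plan is to mirror verbatim the strategy used in Theorem \ref{thm:equidistributionofsmallpoints}: reduce the statement to a direct application of Yuan's arithmetic equidistribution theorem \cite[Theorem 3.1]{Yu08}. To invoke Yuan's theorem one needs three ingredients: (i) the ambient line bundle $\mathcal{O}_{\P^2}(1)$ carries a continuous semipositive adelic metric whose associated height is comparable to the Weil height; (ii) the height of the whole variety $\P^2$ with respect to this metric vanishes; and (iii) a generic sequence of points of small height exists.

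Ingredient (i) is exactly Proposition \ref{prop:semposadelmetricforskewprodpoly}, which asserts that $\{g_{f,v}\}_{v\in M_K}$ defines a semipositive adelic metric, combined with Proposition \ref{prop:propertiesofheightkewprodpoly}(3) which gives $h_f = h + O(1)$. Ingredient (iii) is given by hypothesis. The heart of the proof is therefore (ii), namely that
\[
h_f(\P^2) = 0.
\]
For this I would invoke Zhang's successive minima inequality \cite[Theorem 1.10]{Zh95}, which in our setting reads
\begin{equation}
h_f(\P^2) \;\leq\; \sup_{\mathrm{codim}\,Z = 1}\;\inf_{p \in (\P^2 \setminus Z)(\overline{K})}\, h_f(p).\label{eq:zhangminimaskewprod}
\end{equation}
Since $\{(x_n,y_n)\}$ is generic, for any codimension-one closed subvariety $Z \subset \P^2$ only finitely many terms of the sequence lie in $Z$, so the infimum on the right hand side of \eqref{eq:zhangminimaskewprod} is bounded above by $\liminf h_f(x_n,y_n) = 0$. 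Non-negativity of $h_f$ (Proposition \ref{prop:propertiesofheightkewprodpoly}(1)) yields $h_f(\P^2) = 0$.

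With (i), (ii), (iii) in place, Yuan's theorem applies and gives the weak convergence of the Galois-averaged point measures to the probability measure on $\P^{2,\mathrm{an}}_{\C_v}$ associated to the metrized line bundle $(\mathcal{O}_{\P^2}(1), \|\cdot\|_{g_{f,v}})$. The final step is the identification of this limit measure with $\mu_{f,v}$: in the archimedean case this is the Bedford--Taylor Monge--Amp\`ere measure $dd^c g_f \wedge dd^c g_f$ as recalled in the discussion preceding Lemma \ref{lem:largestconvexhullsupport}, while in the non-archimedean case this is Chambert-Loir's construction $c_1(\mathcal{O}_{\P^2}(1),\|\cdot\|_{g_{f,v}})^{\wedge 2}$. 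The main (mild) obstacle is a bookkeeping one: one must check that the normalizations in Yuan's theorem match those used in Definition \ref{defn:semipositadelicmetricsdef} and in the construction of $\mu_{f,v}$, but since we already verified in Proposition \ref{prop:semposadelmetricforskewprodpoly} that $\{g_{f,v}\}$ is an honest semipositive adelic metric in the required sense, this causes no real difficulty and the proof is complete.
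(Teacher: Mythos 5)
Your proposal is correct and takes essentially the same route as the paper: the paper's proof simply declares itself identical to Theorem \ref{thm:equidistributionofsmallpoints}, namely apply Yuan's equidistribution theorem \cite[Theorem 3.1]{Yu08} after checking $h_f(\P^2)=0$ via Zhang's successive minima inequality and the generic small sequence, with Proposition \ref{prop:semposadelmetricforskewprodpoly} supplying the semipositive adelic metric. Your additional remarks identifying the limit measure with $\mu_{f,v}$ merely spell out what the paper leaves implicit.
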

\begin{proof}[Proof of Theorem \ref{thm:commonzerosofpolyskewprods}] We proceed by means of contradiction. Suppose that there exists a generic sequence $\{p_i \}_{i\geq 1}$ such that for every $i$ there exist integers $m_i$ and $n_i$ so that $F^{m_i}\neq C$ and $G^{n_i}\neq C$, and \begin{equation}F^{m_i}(p_i)=G^{n_i}(p_i)=C(p_i)\label{eq:solutionsofiteratepolyskewprods}\end{equation} Our goal is to show that $F$ and $G$ are compositionally dependent. First, we notice that $m_i$ and $n_i$ must tend to $\infty$ as $i\rightarrow\infty$. Since $F$ and $G$ are polynomials skew products with respect to some coordinate choices respectively,  proposition \ref{prop:smallpointsofpolyskewprods} asserts that the generic sequence $\{p_i\}_{i \geq 1}$ must be dynamically small with respect to both $F$ and $G$. Let's first work with a coordinate of $\A^2$ such that $p_i = (x_i, y_i)$ and $F$ is a polynomial skew product with respect to this coordinate. Then, these imply
$$\lim_{i\rightarrow\infty}h_F(p_i)=0.$$
Applying the arithmetic equidistribution of small points of polynomial skew products, Theorem \ref{thm:equidistributionpolyskewprods}, we have
$$\frac{1}{\deg(x_n,y_n)}\sum_{z'\in \mathrm{Gal}(\overline{K}/K)\cdot (x_n,y_n)}\delta_{z'} = \frac{1}{\deg(p_n)} \sum_{p \in \Gal(\overline{K}/K) \cdot p_n} \delta_p $$
converges weakly to $\mu_{F,v}$ at all places $v\in M_K$ as $n$ approaches infinity. Similarly, we can also choose a coordinate of $\A^2$ such that $G$ is a polynomial skew product, denote as $(z,w)$ and notice that $(z,w) = \sigma (x,y)$ where $\sigma$ is an affine linear automorphism defined over $K$. Then we have 
\begin{align}
   & \frac{1}{\deg(z_n,w_n)}\sum_{x'\in \mathrm{Gal}(\overline{K}/K)\cdot (z_n,w_n)}\delta_{x'} = \frac{1}{\deg(\sigma(x_n,y_n))} \sum_{x' \in \mathrm{Gal}(\overline{K}/K)\cdot \sigma(x_n,y_n)} \delta_{\sigma^{-1}(x')} \nonumber \\& = \frac{1}{\deg(x_n,y_n)} \sum_{x' \in \sigma (\mathrm{Gal}(\overline{K}/K)\cdot (x_n,y_n))} \delta_{\sigma^{-1}(x')} = \frac{1}{\deg(x_n,y_n)}\sum_{z'\in \mathrm{Gal}(\overline{K}/K)\cdot (x_n,y_n)}\delta_{z'},
\end{align}
since $\deg(x_n, y_n) = \deg(\sigma(x_n, y_n))$ as $\sigma$ is a bijection defined over $K$, and this converges weakly to $\mu_{G,v} $ at all places $v \in M_K$ as $n$ approaches infinity. Thus 
$$ \mu_{F,v} = \mu_{G,v}.$$
We know, by Lemma \ref{lem:largestconvexhullsupport}, that the $v$-adic filled Julia sets of $F$ and $G$ must coincide. For each $v\in M_K$, it must hold true that  $$\{p : g_{F,v}(p)=0\}=\{p : g_{G,v}(p)=0\}.$$ Notice that $g_{F,v}$ and $g_{G,v}$ are defined with respect to the coordinate $(x,y)$ and $(z,w)$ respectively. The local-to-global property (cf. Lemma \ref{lem:loctoglobskewprodpoly}) then implies that $F$ and $G$ must share the same set of preperiodic points, i.e., $\mathrm{Prep}(F)=\mathrm{Prep}(G)$.  Corollary \ref{cor: general-skew-product-case} yields the compositional dependence of $F$ and $G$. Hence this gives rise to a contradiction. The proof completes.
\end{proof}
\appendix
\section{Proofs of arithmetic properties associated to polynomial skew products}\label{appenA}
\begin{proof}[Proof of Proposition \ref{prop:Nullstellensatz}] This follows closely Silverman \cite[Theorem 3.11]{Sil07}. For any positive number $m$, we set $$\delta_v(m)=\begin{cases}m,&\mbox{if $v$ is archimedean}\\ 1,&\mbox{if $v$ is non-archimedean}\end{cases}.$$
For any polynomial $\phi$, we define $|\phi|_v$ $\text{to be the maximum of $v$-adic coefficients of $\phi$}.$
Consider \begin{equation}
    |p(x)|_v=\left|\sum_{N=0}^da_Nx^N\right|_v\leq\delta_v(d+1)\max_{0\leq N\leq d}|a_N|_v|x|_v^d\leq C_{1,v}|x|_v^d\label{inequality:upperboundforp}
\end{equation}
where $C_{1,v}:=\delta_v(d+1)|p|_v$. Similarly, we derive
\begin{align}
|q(x,y)|_v&=\left|\sum_{i_1+i_2=d}b_{i_1i_2}x^{i_1}y^{i_2}\right|_v\notag\\&\leq \delta_v\left(\frac{1}{2}(d+2)(d+1)\right)\max_{i_1,i_2}|b_{i_1i_2}|_v\max\{|x|_v,|y|_v\}^d\notag\\&\leq C_{2,v}\max\{|x|_v,|y|_v\}^d\label{inequality:upperboundforq}
\end{align}
where $C_{2.v}:=\delta_v\left(\frac{1}{2}(d+2)(d+1)\right)|q|_v$. Thus by inequalities (\ref{inequality:upperboundforp}) and (\ref{inequality:upperboundforq}), there exists $C_v\geq 1$ so that 
$$\max\{|p(x)|_v,|q(x,y)|_v\}\leq C_v\max\{|x|_v,|y|_v\}^d.$$ This establishes an upper bound for inequality (\ref{inequality:boundforskewproduct}). On the other hand, the lower bound of (\ref{inequality:boundforskewproduct}) is a consequence of Nullstellensatz.  There exists $e\geq 0$ such that $x^e$ is in the ideal generated by $p$, i.e., $x^e\in (p)$. Thers is a polynomial $r\in \C[x]$ so that $x^e=pr$ with degree of $r$ is $e-d$. Then
\begin{align*}
|x|^e_v=|p(x)r(x)|^e_v\leq \delta_v(1+e-d)|r|_v|x|^{e-d}_v|p(x)|_v\label{inequality:Nullforp}
\end{align*}
Dividing both sides by $|x|^{e-d}_v$, we obtain
\begin{equation}
    C'_{1,v}|x|^d_v\leq |p(x)|_v\label{inequality:Nullforp}
\end{equation} where $C'_{1,v}:=\frac{1}{\delta_v(1+e-d)|r|_v}$. Likewise for $q$, there exists $e'\geq0$ such that $x^{e'}, y^{e'}\in (q).$ Then there exists $s_1,s_2\in \C[x,y]$ satisfying $x^{e'}=qs_1$ and $y^{e'}=qs_2$. Note that $\deg s_1=\deg s_2=e'-d.$ Thus
\begin{align*}
    \max\{|x|_v,|y|_v\}^{e'}&=\max\{|qs_1|_v,|qs_2|_v\}\notag\\&\leq \delta_v(1+e-d)\max\{|s_1|_v,|s_2|_v\}\max\{|x|_v,|y|_v\}^{e'-d}|q(x,y)|_v\notag
\end{align*}  
Rearranging, we have
\begin{align}
C'_{2,v} \max\{|x|_v,|y|_v\}^d&\leq |q(x,y)|_v\label{inequality:Nulllforq}
\end{align}where $C'_{2,v}=\frac{1}{\delta_v(1+e-d)\max\{|s_1|_v,|s_2|_v\}}$. Combining inequalities (\ref{inequality:Nullforp}) and (\ref{inequality:Nulllforq}),  there exists $C'_v$ satisfying
$$C_v' \max\{|x|_v,|y|_v\}^d\leq \max\{|p(x)|_v,|q(x,y)|_v\}.$$ This completes the proof of inequality (\ref{inequality:boundforskewproduct}).  The second part of the statement basically asserts that if $f$ has a good reduction at $v$, then $$\|f(x,y)\|_v=\|(x,y)\|_v^d.$$  More details about the definition of good reduction, the reader may refer \cite[Definition 10.3]{BR10}.
\end{proof}
\begin{proof}[Proof of Corollary \ref{cor:uniformconvergenceofGreen}] It follows from standard telescoping argument. For each place $v\in M_K$,  there exists (existence due to Proposition \ref{prop:Nullstellensatz}) a uniform constant $C\geq 1$ such that \begin{equation}
    |\log^+\|f(x,y)\|_v-d\log^+\|(x,y)\|_v|\leq C\label{eq:uniformbounddifferencelogplus}
\end{equation} for all $x,y\in \C_v$. Note that we may take $C:=\max\{\log C_v,-\log C'_v\}$. Here $\log^+\|(x,y)\|_v=\log\max\{1,|x|_v,|y|_v\}.$ Now, let $n>m\geq 0$ and compute
\begin{align*}
&\left|\frac{1}{d^n}\log^+\|f^n(x,y)\|_v-\frac{1}{d^m}\log^+\|f^m(x,y)\|_v\right|\\&=\left|\sum_{i=m}^{n}\left(\frac{1}{d^i}\log^+\|f^i(x,y)\|-\frac{1}{d^{i-1}}\log^+\|f^{i-1}(x,y)\|_v\right)\right|\\&\leq \sum_{i=m}^n\frac{1}{d^i}|\log^+\|f^i(x,y)\|_v-d\log^+\|f^{i-1}(x,y)\|_v|\\&\leq \sum_{i=m}^n\frac{C}{d^i}\quad (\text{using}\,\,(\ref{eq:uniformbounddifferencelogplus}))
\end{align*}
The last quantity can be made arbitrary small. Thus the sequence $\{d^{-n}\log^+\|f^n(x,y)\|_v\}$ is Cauchy in $\R$ and hence converges which we denote the limiting as 
$$g_v(x,y)=\lim_{n\rightarrow\infty}\frac{1}{d^n}\log\max\{1,|p(x)|_v,|q(x,y)|_v\}.$$ It is clear that $g_v(x,y)$ is continuous on $\C^2_v$ as the sequence $g_{v,n}$ is continuous. Now, we see that (1) and (3) are clear from the limit definition of $g_{f,v}$. Notice that (3) asserts that the local Green's function of polynomial skew product $f$ vanish exactly the set at which $f$ has bounded forward orbit.  The statement (2) is basically taking $m=0$ in the telescoping argument. 
\end{proof}
\begin{proof}[Proof of Proposition \ref{prop:homogeneousgreenfunctionproperties}] For (1), it can be derived similarly as in \cite[Theorem 2.1]{HP94}. Roughly speaking, the uniform convergence is a standard telescoping argument. For archimedean $v=\infty$, we know that a limit of plurisubharmonic function is again plurisubharmonic, so $G_{F,v}$ is plurisubharmonic. Also, the contiuity of $G_{F,v}$ follows as being the uniform limit of continuous funciton. For (2), we can easily compute
\begin{align*}
G_{F,v}(\lambda x,\lambda y,\lambda z)&=\lim_{n\rightarrow\infty} \frac{1}{d^n}\log \max\{|(\lambda t)^{d^n}p^n(x/z)|_v,(\lambda z)^{d^n}q^n(x/z,y/z),|(\lambda z)^{d^n}|_v\}\\&=\lim_{n\rightarrow\infty}\frac{1}{d^n}\log|\lambda^{d^n}|_v+\lim_{n\rightarrow\infty}\log\max\{|z^{d^n}p^n(x/z)|_v,|z^{d^n}q(x/z,y/z)|_v|z^{d^n}|_v\}
\\&=|\lambda|_v+G_{F,v}(x,y,z).\end{align*} Now (3) is an easy algebra, that is 
\begin{align*}G_{F,v}(F(x,y,z))&=\lim_{n\rightarrow\infty}\frac{1}{d^n}\log\|F^n(F(x,y,z))\|_v\\&=\lim_{n\rightarrow\infty}\frac{d}{d^{n+1}}\log\|F^{n+1}(x,y,z)\|_v=dG_{F,v}(x,y,z).\end{align*}
The inequality (4) is a basic telescoping argument. For (5), the filled Julia set $K_{F,v}$ can be interpreted as the unit ball in $\C^3_v$ with respect to the homogeneous Green function $G_{F,v}$ in light of \cite[Lemma 3.7]{BR06}. The proof of our statement follows  their idea along the same line. In detail, if $(x,y,z)\in K_{F,v}$, there exists $M>0$ such that $\|F^n(x,y,z)\|_v\leq M$ for all $n$ and therefore $G_{F,v}(x,y,z)\leq \displaystyle\lim_{n\rightarrow\infty}\frac{1}{d^n}\log M=0$. For the reverse inequality, we may suppose $(x,y,z)\not\in K_{F,v}$. Then for sufficiently large enough $n_0$ there is a radius $R_v$ such that $\|F^{n_0}(x,y,z)\|_v>R_v$. An easy induction argument yields the desired inequality.
\end{proof}
\begin{proof}[Proof of Lemma \ref{lem:loctoglobskewprodpoly}] For any $(x,y)\in \A^2_{\overline{K}}$, we take a finite extension $L$ of $K$ so that  $(x,y)\in \A^2_L$. Then proposition \ref{prop:propertiesofheightkewprodpoly}(4) asserts that

   $$ (x,y)\in \mathrm{Prep}(f) \Longleftrightarrow h_f(x,y)=0.$$
Recall that the canonical height function $h_f$ is decomposed as the local Green's function $g_{f,v}$ and $g_{f,v}$ is a nonnegative function. The fact that $h_f(x,y)=0$ implies $g_{f,v}(x^{\sigma},y^{\sigma})=0$ for all $v\in M_K$ and all $\sigma : L^2\hookrightarrow \C^2_v.$  Now, Proposition \ref{prop:localgreenproperties}(3) ensures that all Galois conjugates of $(x,y)$ are contained the $v$-adic filled Julia set of $f$ at all places $v\in M_K$.  
\end{proof}
\begin{proof}[Proof of Lemma \ref{lem:largestconvexhullsupport}] The proof idea is exactly the same as described in \cite[Lemma 6.3]{DF17}. We include here for the convenience of the reader. First note that $\mathrm{supp}(\mu_{f,v})\subset \mathcal{K}_{\C_v}(f)$ and so $\displaystyle\sup_{\mathrm{supp}(\mu_{f,v})}|P|\leq \displaystyle\sup_{\mathcal{K}_{\C_v}(f)}|P|.$ It remains to verify the reverse inequality. Set $$C_0=\sup_{\mathrm{supp}(\mu_{f,v})}|P|.$$ Let $\varepsilon>0.$ We then define $$\widetilde{g_{f,v}}:=\max\left\{g_{f,v},\log\frac{|P|}{C_0+\varepsilon}\right\}.$$ The function $\widetilde{g_{f,v}}$ is continuous non-negative function on $\A^2_{\C_v}$ which induces a continuous semipositive metric on $\mathcal{O}(1).$ Near $\mathrm{supp}(\mu_{f,v})$, we have $\widetilde{g_{f,v}}=g_{f,v}$. Applying \cite[Corollay A.2]{DF17}, the Monge-Amp$\grave{\text{e}}$re measures 
 of $g_{f,v}$ and $\widetilde{g_{f,v}}$ must coincide. Hence $\widetilde{g_{f,v}}-g_{f,v}$ must be constant by a result of Yuan-Zhang \cite[Corollary 2.2 (Calabi Theorem)]{YZ17}, so $\widetilde{g_{f,v}}=g_{f,v}$. It follows that $$\log\frac{|P|}{C_0+\varepsilon}\leq g_{f,v}.$$ On the filled Julia set $\mathcal{K}_{\C_v}(f)$, we obtain 
$$\log\frac{|P|}{C_0+\varepsilon}\leq 0.$$ It is not hard to see that $$\sup_{\mathcal{K}_{\C_v}(f)}|P|\leq \sup_{\mathrm{supp}(\mu_{f,v})}|P|+\varepsilon.$$ Letting $\varepsilon\rightarrow0$, we conclude $$\sup_{\mathcal{K}_{\C_v}(f)}|P|\leq \sup_{\mathrm{supp}(\mu_{f,v})}|P|$$ as desired.
\end{proof}

\end{document}